\DeclareRobustCommand{\mklyxadded}[1]{\textcolor{lyxadded}\bgroup#1\egroup}
\DeclareRobustCommand{\mklyxdeleted}[1]{\textcolor{lyxdeleted}\bgroup\mklyxsout{#1}\egroup}
\DeclareRobustCommand{\mklyxsout}[1]{\ifx\\#1\else\sout{#1}\fi}
\numberwithin{equation}{section}
\numberwithin{figure}{section}
\theoremstyle{plain}
\newtheorem{thm}{\protect\theoremname}
\theoremstyle{definition}
\newtheorem{defn}[thm]{\protect\definitionname}
\theoremstyle{plain}
\newtheorem{prop}[thm]{\protect\propositionname}
\theoremstyle{plain}
\newtheorem{cor}[thm]{\protect\corollaryname}
\theoremstyle{remark}
\newtheorem{rem}[thm]{\protect\remarkname}
\theoremstyle{definition}
\newtheorem{example}[thm]{\protect\examplename}
\theoremstyle{plain}
\newtheorem{lem}[thm]{\protect\lemmaname}
\setlist[enumerate]{leftmargin=*,label=(\roman*),align=left}
\newcommand{\ra}{\longrightarrow}
\newcommand{\field}[1]{\mathbb{#1}}
\newcommand{\R}{\field{R}} 
\newcommand{\N}{\field{N}} 
\newcommand{\CC}{\mathbb C} 
\newcommand{\eps}{\varepsilon} 
\renewcommand{\phi}{\varphi}
\newcommand{\diff}[1]{\ifmmode\mathchoice{\hbox{\rm d}#1}  
 {\hbox{\rm d}#1}  
 {\scalebox{0.75}{$\hbox{\rm d}#1$}}  
 {\scalebox{0.35}{$\hbox{\rm d}#1$}}  
 \fi} 
\newcommand{\abs}[2][\empty]{\ifx#1\empty\left|#2\right|%
\else#1\vert #2 #1\vert\fi}
\newcommand{\Rtil}{\widetilde \R} 
\newcommand{\Ctil}{\widetilde \CC} 
\newcommand{\sint}[1]{\langle#1\rangle} 
\newcommand{\frontRise}[2]{\ifmmode\mathchoice{{\vphantom{#1}}^{\scalebox{0.6}{$#2$}}}  
 {{\vphantom{#1}}^{\scalebox{0.56}{$#2$}}}  
 {{\vphantom{#1}}^{\scalebox{0.47}{$#2$}}}  
 {{\vphantom{#1}}^{\scalebox{0.35}{$#2$}}}\fi} 
\newcommand{\RC}[1]{\frontRise{\R}{#1}\Rtil}
\newcommand{\frontRiseDown}[3]{\ifmmode\mathchoice{{\vphantom{#1}}^{\scalebox{0.6}{$#2$}}_{\scalebox{0.6}{$#3$}}}  
 {{\vphantom{#1}}^{\scalebox{0.56}{$#2$}}_{\scalebox{0.56}{$#3$}}}  
 {{\vphantom{#1}}^{\scalebox{0.47}{$#2$}}_{\scalebox{0.47}{$#3$}}}  
 {{\vphantom{#1}}^{\scalebox{0.35}{$#2$}}_{\scalebox{0.35}{$#3$}}}\fi} 
\newcommand{\rti}{\RC{\rho}}
\newcommand{\ctil}{\frontRise{\CC}{\rho}\Ctil}
\providecommand{\corollaryname}{Corollary}
\providecommand{\definitionname}{Definition}
\providecommand{\examplename}{Example}
\providecommand{\lemmaname}{Lemma}
\providecommand{\propositionname}{Proposition}
\providecommand{\remarkname}{Remark}
\providecommand{\theoremname}{Theorem}
\begin{document}
\title{The Hahn-Banach theorem in spaces of nonlinear generalized functions}
\author{Djamel eddine Kebiche \and Paolo Giordano}
\thanks{This research was funded in whole or in part by the Austrian Science
Fund (FWF) 10.55776/P33945 and 10.55776/P33538. For open access purposes,
the authors have applied a CC BY public copyright license to any author-accepted
manuscript version arising from this submission}
\address{\textsc{Faculty of Mathematics, University of Vienna, Austria, Oskar-Morgenstern-Platz
1, 1090 Wien, Austria}}
\email{\texttt{paolo.giordano@univie.ac.at, djameleddine.kebiche@univie.ac.at}}
\begin{abstract}
In this paper, we establish a suitable version of the Hahn-Banach
theorem within the framework of Colombeau spaces, a class of spaces
used to model generalized functions. Our approach addresses the case
where maps are defined $\eps$-wise, which simplifies the framework
and makes the extension of linear functionals more manageable. As
an application of our main result, we demonstrate the separation of
convex sets in Colombeau spaces.
\end{abstract}

\maketitle

\section{Introduction}

\textcolor{black}{In analogy with the classical Hahn-Banach extension
property, one could ask if for any $\rti$-linear map $g$ that satisfies
\begin{equation}
\forall x\in\mathcal{F}\subseteq\mathcal{G}:\,g(x)\leq p(x),\label{eq:hbt}
\end{equation}
where $\mathcal{G}$ is an }$\rti$-module, $\mathcal{F}$ is an\textcolor{black}{{}
}$\rti$-submodule and $p$ is an $\rti$-sublinear map, \textcolor{black}{there
exists an $\rti$-linear map $f$ that extends $g$ and satisfies
}\eqref{eq:hbt} for every $x\in\mathcal{G}$\textcolor{black}{. Thm.~10.1
of \cite{Ver02} shows that under some set-theoretic assumptions }(continuum
hypothesis) there exists a linear $\rti$-functional $\phi$ defined
on a $\rti$-submodule of $\rti$ that cannot be extended to an $\rti$-linear
map $\psi:\rti\longrightarrow\rti$, and hence \textcolor{black}{the
extension property does not hold for any Colombeau space $\mathcal{G}_{E}$.
However, when $\mathcal{G}$ is a Hilbert $\rti$-module and if $\mathcal{F}$
is closed and edged (see Def.~2.9 of \cite{GaVe11}) the extension
property holds (see Thm~2.22 of \cite{GaVe11}). For a more restricted
version of the Hahn-Banach extension property on Banach $\rti$-modules,
see Thm.~5.2 of \cite{May07}, where the author proved that, when
$L\subseteq\rti$ is a subfield and $(E,||\cdot||)$ is an ultrametric
normed $L$-linear space, any continuous $L$-linear functional on
some $L$-linear subspace $V$ of $E$, can be extended to a continuous
$L$-linear functional on $E$. We also mention that the Hahn-Banach
extension property holds for non-Archimedean normed linear spaces
over a field $\mathbb{K}$ (see \cite{Lux76,ToVe08}).}

In our approach, we deal with the case where the $\rti$-module $\mathcal{G}$
is a Colombeau spaces $\mathcal{G}_{E}$ based on a family of normed
spaces or on a family of locally convex topological vector spaces
(see Sec.~\ref{subsec:ColSpa}), the maps $g$ and $p$ have representatives
(Def.~\ref{def:maps=000020with=000020rep=000020}), and the $\rti$-submodule
$\mathcal{F}$ is an internal set (Sec.~\ref{subsec:IntSet}). In
this setting, and $\eps$-characterization is possible allowing the
use of the classical Hahn-Banach theorem at the $\eps$-level and
conclude.

\section{Basic notions of Colombeau spaces }

This section introduces the foundational concepts of Colombeau spaces
necessary to understand the work presented in this paper. The main
references are \cite{Gar05,GaVe11,GKV24,LGKV,May07,MTAG21,ObVe08}.

\subsection{Colombeau spaces and topological $\ctil$-modules}

\subsubsection{\protect\label{subsec:ColSpa}Colombeau spaces}

Let $\rho:=(\rho_{\varepsilon})\in I^{I}$ ($I:=(0,1]$) be a net
such that $\rho_{\varepsilon}\to0$ as $\varepsilon\rightarrow0^{+}$
(in the following, such a net is called a \emph{gauge}) and \textcolor{black}{let
$E:=(E_{\varepsilon},(p_{\varepsilon n})_{n\in\mathbb{N}})_{\eps\in I}$
be a family of locally convex topological vector spaces such that
for every $\varepsilon$, the topology on $E_{\varepsilon}$ is defined
by a family of seminorms $(p_{\varepsilon n})_{n\in\mathbb{N}}$.
}In the following, all the asymptotic relations in $\eps\in(0,1]$
are considered for $\eps\to0^{+}$ 

\textcolor{black}{We set $\mathcal{G}_{E}:=\mathcal{M}_{E}\slash\mathcal{N}_{E}$,
where 
\begin{align*}
\mathcal{M}_{E} & :=\left\{ (u_{\varepsilon})\in\prod_{\eps\in I}E_{\varepsilon}\mid\forall n\in\N\,\exists N\in\mathbb{N}:\,p_{\varepsilon n}(u_{\varepsilon})=O(\rho_{\varepsilon}^{-N})\right\} ,\\
\mathcal{N}_{E} & :=\left\{ (u_{\varepsilon})\in\prod_{\eps\in I}E_{\varepsilon}\mid\forall n\in\N\,\forall N\in\mathbb{N}:\,p_{\varepsilon n}(u_{\varepsilon})=O(\rho_{\varepsilon}^{N})\right\} .
\end{align*}
}Elements of $\mathcal{M}_{E}$ and $\mathcal{N}_{E}$ are respectively
called $E$-moderate and $E$-negligible nets, whereas $\mathcal{G}_{E}$
is called the space of \textit{Colombeau generalized functions based
on $E=(E_{\varepsilon},(p_{\varepsilon n})_{n\in\mathbb{N}})$. }In
this work, equivalence classes in the quotient $\mathcal{G}_{E}$
are simply denoted as $[u_{\eps}]$.

For any $J\subseteq\N$, $E_{J}$ denotes the family $(E_{\varepsilon},(p_{\varepsilon n})_{n\in J})$
(and hence $E_{\N}=E$), and $\mathcal{G}_{E_{J}}$ denotes the space
of Colombeau generalized functions based on $E_{J}$. Note that any
element $u$ of $\mathcal{G}_{E}$ defines an element of $\mathcal{G}_{E_{J}}$
(which is still denoted by $u$).

\textcolor{black}{This type of $\rti$-module includes the usual Colombeau
algebras when we consider $\rho_{\eps}=\eps$, $E_{\varepsilon}=\mathcal{C}^{\infty}(\Omega)$,
where $\Omega\subseteq\R^{d}$ is an open set, and
\[
p_{\varepsilon n}(u)=\sup_{|\alpha|\leq n}\sup_{x\in K_{n}}|\partial^{\alpha}u(x)|
\]
where $(K_{n})_{n}$ is an exhaustive sequence of compact sets for
$\Omega$.}

\subsubsection{The Robinson-Colombeau ring of generalized numbers}

The rings $\rti$ and $\ctil$ are respectively the \textit{Robinson-Colombeau
ring of generalized real numbers }and the \textit{Robinson-Colombeau
ring of generalized complex numbers}, and they are obtained by taking
$E_{\varepsilon}=\mathbb{R}$ for every $\varepsilon$ and $E_{\varepsilon}=\mathbb{C}$
for every $\varepsilon$ respectively, see \cite{GKV24}. The generalized
number having $(\rho_{\eps})$ as a representative is denoted by $\diff\rho$.
Clearly, for $\rho_{\eps}=\eps$, we get the usual ring of Colombeau
generalized numbers, see e.g.~\cite{May07} and references therein.

The ring $\rti$ of generalized real numbers can be equipped with
the order relation $\leq$ given by $x\leq y$ if for any representative
$[x_{\eps}]$ of $x$, there exists a representative $[y_{\eps}]$
of $y$ such that $x_{\eps}\leq y_{\eps}$ for all $\eps$ small.
For abbreviation reason, the notation $\forall^{0}\eps$ refers to
``for all $\eps$ small''. Moreover, we write $x<y$ whenever $x\leq y$
and $y-x$ is a positive invertible generalized number. Equipped with
this order, $\rti$ is a partially ordered ring.

Given a subset $S\subseteq I$, we denote by $e_{S}$ the generalized
number having the characteristic function of $S$ as a representative.

\subsubsection{\protect\label{subsec:Top}Topological $\ctil$-modules}

Let $E$ be a family of locally convex topological vector spaces.
One can easily see that if $E_{\eps}$ is a vector space over $\R$
(resp.~over $\mathbb{C}$) for all $\eps$, then $\mathcal{G}_{E}$
is a module over $\rti$ (resp.~over $\ctil$). Thus, $\ctil$ (resp.~$\rti$)
is a module over itself.

We recall from Sec.~1.1.1 of \cite{GaVe11} that a \textit{topological
$\ctil$-module} is a $\ctil$-module $\mathcal{G}$ endowed with
a $\ctil$-linear topology, i.e.~, with a topology such that the
addition $\mathcal{G}\times\mathcal{G}\to\mathcal{G}:(u,v)\rightarrow u+v$
and the product $\ctil\times\mathcal{G}\to\mathcal{G}:\,(\lambda,u)\to\lambda u$
are continuous.

To endow $\ctil$ with a structure of a topological\textit{ }$\ctil$-module
we consider the map $\mathrm{v}_{\ctil}:\ctil\ra(-\infty,\infty]$
called \textit{valuation} defined by 
\[
\forall r=[r_{\eps}]\in\ctil:\,\mathrm{\mathrm{v}_{\ctil}}(r):=\sup\{b\in\R\mid|r_{\eps}|=O(\rho_{\eps}^{b})\,\text{as }\eps\to0^{+}\}.
\]
One can easily see that the map $\mathrm{v}$ is well-defined i.e.~it
does not depend on the representative $(r_{\eps})$ of $r$. The map
$\mathrm{v}$ satisfies the following properties
\begin{enumerate}
\item $\mathrm{v}(r)=\infty$ if and only if $r=0$
\item $\mathrm{v}(rs)\geq\mathrm{v}(r)+\mathrm{v}(s)$
\item $\mathrm{v}(r+s)\geq\min\{\mathrm{v}(r),\mathrm{v}(s)\}$.
\end{enumerate}
We also consider the map
\[
|\cdot|_{\mathrm{e}}:\ctil\ra[0,\infty),\,\,\,\,u\ra|u|_{\mathrm{e}}:=\mathrm{e}^{-\mathrm{\mathrm{v}_{\ctil}}(u)}.
\]
The properties of the valuation on $\ctil$ make the coarsest topology
on $\ctil$ such that the map $|\cdot|_{\mathrm{e}}$ is continuous
compatible with the ring structure. This topology is commonly called
the sharp topology \cite{NePi,Sca00,Sca98,Sca92}.

A \textit{locally convex topological $\ctil$-module }(\cite[Section 1.1.1]{GaVe11})\textit{
}is a topological $\ctil$-module whose topology is determined by
a family of \textit{ultra-pseudo-seminorm}s. As defined in \cite[Definition 1.8]{Gar05}
an \textit{ultra-pseudo-seminorm} on $\mathcal{G}$ is a map $\mathcal{P}:\,\mathcal{G}\ra[0,\infty)$
satisfying
\begin{enumerate}
\item $\mathcal{P}(0)=0$
\item $\mathcal{P}(\lambda u)\leq|\lambda|_{\mathrm{e}}\mathcal{P}(u)$
for all $\lambda\in\ctil$ and for all $u\in\mathcal{G}$
\item $\mathcal{P}(u+v)\leq\max\{\mathcal{P}(u),\mathcal{P}(v)\}$
\end{enumerate}
An ultra-pseudo-seminorm $\mathcal{P}$ satisfying $\mathcal{P}(u)=0$
if and only if $u=0$ is called \textit{ultra-pseudo-norm}. A deep
study of the topological dual of a topological $\ctil$-module $\mathcal{G}$
of all continuous and $\ctil$-linear functionals on $\mathcal{G}$
can be found in \cite{Gar05-1,Gar05}.

The notion of valuation can be defined in a general context of $\ctil$-modules
as follows: a valuation $\mathrm{v}$ on a $\ctil$-module $\mathcal{G}$
is a function $\mathrm{v}:\,\mathcal{G}\ra(-\infty,\infty]$ satisfying
\begin{enumerate}
\item $\mathrm{v}(0)=\infty$
\item $\mathrm{v}(\lambda u)\geq\mathrm{v}_{\ctil}(\lambda)+\mathrm{v}(u)$
for all $\lambda\in\ctil$ and for all $u\in\mathcal{G}$
\item $\mathrm{v}(u+v)\geq\min\{\mathrm{v}(r),\mathrm{v}(s)\}.$
\end{enumerate}
Any valuation $\mathrm{v}$ generates an ultra-pseudo-seminorm $\mathcal{P}$
by setting $\mathcal{P}(u)=\mathrm{e}^{-\mathrm{v}(u)}$.

In the context of a Colombeau space based on a family of locally convex
topological vector spaces\textcolor{black}{{} $E:=(E_{\varepsilon},(p_{\varepsilon n})_{n\in\mathbb{N}})_{\eps\in I}$},
for any $n$, the map $\mathrm{v}_{p_{n}}:\mathcal{G}_{E}\ra(-\infty,\infty]$
defined by 
\[
\mathrm{v}_{p_{n}}([u_{\eps}]):=\sup\{b\in\R\mid\,p_{\eps n}(u_{\eps})=O(\rho_{\eps}^{b})\,\text{as }\eps\ra0^{+}\}
\]
is a valuation on $\mathcal{G}_{E}$. To any valuation $\mathrm{v}_{p_{n}}$,
we associate the ultra-pseudo-seminorm $\mathcal{P}_{n}(u):=\mathrm{e}^{-\mathrm{v}_{p_{n}}(u)}$.
The family of ultra-pseudo-seminorms $\{\mathcal{P}_{n}\}$ equips
$\mathcal{G}_{E}$ with a structure of a locally convex topological
$\ctil$-module.

\textcolor{black}{A special kind of topological $\ctil$-modules is
$\ctil$-modules with $\rti$-seminorms ($\rti$-valued seminorms).
As defined in \cite[Definition 1.5]{GaVe11}, an $\rti$-seminorm
$p:\mathcal{G}\ra\rti$ is a map satisfying the following properties}
\begin{itemize}
\item $p(0)=0$ and $p(u)\geq0$ for all $u\in\mathcal{G}$;
\item $p(\lambda u)=|\lambda|p(u)$ for all $\lambda=[\lambda_{\eps}]\in\,\ctil$
and for all $u\in\mathcal{G}$, where $|\lambda|=[|\lambda_{\eps}|]\in\rti$;
\item $p(u+v)\leq p(u)+p(v)$ for all $u$, $v\in\mathcal{G}$.
\end{itemize}
If in addition, $p(u)=0$ implies $u=0$, we say that $p$ is an $\rti$-norm.

In the context of a Colombeau space based on a family of locally convex
topological vector spaces\textcolor{black}{{} $E:=(E_{\varepsilon},(p_{\varepsilon n})_{n\in\mathbb{N}})_{\eps\in I}$},
the map $\mathcal{G}_{E}\ra\rti$, $u\ra p_{n}(u):=[p_{\eps n}(u_{\eps})]$
defines an $\rti$-seminorm. In case $p_{\eps n}$ is a norm for all
$\eps$ small, the map $p_{n}$ is an $\rti$-norm. By \textcolor{black}{\cite[Proposition 1.6]{GaVe11},
for any $n\in\N$, the map $\mathcal{P}_{n}(u):=|p_{n}(u)|_{\mathrm{e}}$
defines an ultra-pseudo-seminorm on $\mathcal{G}_{E}$, and the $\ctil$-linear
topology on $\mathcal{G}_{E}$ determined by the family of $\rti$-seminorms
$\{p_{n}\}_{n\in\N}$ coincides with the topology of the corresponding
ultra-pseudo-seminorms $\{\mathcal{P}_{n}\}_{n\in\N}$.}

\bigskip{}

Throughout this paper, we consider the topology generated by $\rti$-seminorms
$(p_{n})$ in the case of a Colombeau space based on a family of locally
convex topological vector spaces, and by an $\rti$-norm in case of
a Colombeau space based on a family of normed spaces.

\subsubsection{\protect\label{subsec:IntSet}Internal sets and strongly internal
sets}

\textcolor{black}{We recall from \cite{ObVe08} that a subset $A\in\mathcal{G}_{E}$
is called internal if there exists a net $(A_{\eps})$ of subsets
$A_{\eps}\subseteq E_{\eps}$ such that 
\[
A:=\{u\in\mathcal{G}_{E}\mid\exists[u_{\eps}]=u\,\forall^{0}\eps:\,u_{\eps}\in A_{\eps}\}
\]
where the notation $\exists[u_{\eps}]=u$ refers to ``there exists
a representative $(u_{\eps})$ of $u$''. The net $(A_{\eps})$ is
said to be a representative of $A$, and the internal set $A$ is
said to be generated by the net $(A_{\eps})$. For a subset $J\subseteq\N$,
we denote by $A_{J}$ the internal set of $\mathcal{G}_{E_{J}}$ generated
by the net $(A_{\eps})$.}

An internal set $A$ is said to be sharply bounded if 
\[
\forall i\in\N\,\exists N_{i}\,\forall u\in A:\,p_{i}(u)\leq\diff\rho^{-N_{i}}.
\]
A representative $(A_{\eps})$ of an internal set is said to be sharply
bounded if 
\[
\forall i\in\N\,\exists N_{i}\,\forall^{0}\eps\,\forall u\in A_{\eps}:\,p_{\eps i}(u)\leq\rho_{\eps}^{-N_{i}}.
\]
Clearly, any internal set having a sharply bounded representative
is sharply bounded. The converse holds when $E$ is a family of normed
spaces \textcolor{black}{\cite[Lemma 3.3]{ObVe08}.}

When $E$ is a family of normed spaces, an internal set $A$ is said
to be functionally compact set (\cite[Definition 52]{GKV24}), and
we write $A\Subset_{\mathrm{f}}\mathcal{G}_{E}$, if $A$ has a sharply
bounded representative made of compact subsets. This type of sets
are not necessary compact sets with respect to the sharp topology,
but they satisfy many properties similar to those satisfied by compact
sets in normed spaces.

Any internal set $A\subseteq\mathcal{G}_{E}$ is closed (with respect
to the sharp topology). For more results about internal sets, we refer
the reader to \textcolor{black}{\cite{ObVe08}.}

A subset $A\subseteq\mathcal{G}_{E}$ is said to be strongly internal
if there exists a net $(A_{\eps})$ of subsets $A_{\eps}\subseteq E_{\eps}$
such that \textcolor{black}{
\[
A:=\{u\in\mathcal{G}_{E}\mid\forall[u_{\eps}]=u\,\forall^{0}\eps:\,u_{\eps}\in A_{\eps}\}.
\]
where the notation $\forall[u_{\eps}]=u$ refers to ``for any representative
$(u_{\eps})$ of $u$''.}

\subsubsection{\protect\label{subsec:Sup}Supremum in $\rti$}

We recall here the notion of (sharp) supremum of subsets of $\rti$
(see Def.~20 and Rem.~21 of \cite{MTAG21} and references therein),
which are defined with respect to the sharp topology which is, as
defined in Sec.~\ref{subsec:Top}, generated by the family 
\[
\{B_{r}(x)\mid r\in\rti_{>0}\,\,\,x\in\rti\}
\]
where $B_{r}(x):=\{y\in\rti\mid|x-y|<r\}$ is the open ball of $\rti$
with center $x$ and radius $r$.
\begin{defn}
Let $S\subseteq\rti$, and let $\sigma$, $\iota\in\rti$. Then we
say that $\sigma$ is the \emph{sharp }(or \emph{closed})\emph{ supremum}
of $S$, and we write $\sigma=\sup(S)$, if
\begin{enumerate}
\item $\forall s\in S:\,s\leq\sigma$
\item $\forall q\in\N\,\exists\overline{s}\in S:\,\sigma-\diff\rho^{q}\leq\overline{s}$
\end{enumerate}
Similarly, we say that $\iota$ is the sharp infimum of $S$, and
we write $\iota=\inf(S)$ if
\begin{enumerate}
\item $\forall s\in S:\,s\geq\iota$
\item $\forall q\in\N\,\exists\overline{s}\in S:\,\iota+\diff\rho^{q}\geq\overline{s}$.
\end{enumerate}
Unlike in $\R$, there exist bounded subsets of $\rti$ that do not
have a supremum. Even worse, there exists subsets of $\rti$ that
do not have a least upper bound e.g.~the set $D_{\infty}$ of all
infinitesimal points i.e.~
\[
D_{\infty}:=\{x\in\rti\mid\exists[x_{\eps}]=x\,\lim_{\eps\to0^{+}}x_{\eps}=0\}=\{x\in\rti\mid x\approx0\}.
\]
For more details, see \cite{MTAG21}.
\end{defn}

\subsubsection{\protect\label{subsec:LinMaps}$\ctil$-(sub)linear maps and continuity}

Let $(\mathcal{G},\{p_{l}\}_{l\in L})$ and $(\mathcal{F},\{q_{j}\}_{j\in J})$
be topological $\ctil$-modules with $\rti$-seminorms. We recall
from Prop.~1.7 of \cite{GaVe11} that a $\ctil$-linear map $T:\mathcal{G}\ra\mathcal{F}$
is continuous if and only if for all $j\in J$ there exist $C\in\rti_{\geq0}$
and a finite subset $L_{0}\subseteq L$ such that 
\[
\forall u\in\mathcal{G}:\,q_{j}(T(u))\leq C\max_{l\in L_{0}}p_{l}(u).
\]
As a consequence, in the case of a Colombeau space based on a family\textit{$E=(E_{\varepsilon},(p_{\varepsilon n})_{n\in\mathbb{N}})$}
of locally convex topological vector spaces, a $\ctil$-linear map
$T:\mathcal{G}_{E}\ra\rti$ is continuous if there exist a constant
$C\in\rti_{\geq0}$ and a finite subset $J\subseteq\N$ such that
\[
\forall u\in\mathcal{G}_{E}:\,|T(u)|\leq C\max_{j\in J}p_{j}(u).
\]
Let $\mathcal{G}$ be an $\rti$-module. A functional $p:\mathcal{G}\longrightarrow\rti$
is said to be $\rti$-\emph{sublinear map} if it fulfills the following
two conditions 
\begin{align}
\forall u\in\mathcal{G\,}\forall\lambda\in\rti_{>0}:\,p(\lambda u) & =\lambda p(u)\label{eq:sublin1}\\
\forall u,v\in\mathcal{G}:\,p(u+v)\leq & p(u)+p(v).\label{eq:sublin2}
\end{align}
We recall now the definition of $\ctil$-(sub)linear maps defined
by a net of representatives.
\begin{defn}
\label{def:maps=000020with=000020rep=000020}Let $E=(E_{\eps},(p_{\eps n})_{n\in\N})$
be a family of locally convex topological vector spaces, and let $T:\mathcal{G}_{E}\ra\ctil$
be a $\ctil$-(sub)linear map. We say that $T$ is \textit{defined
by a net of representatives} if there exists a net $(T_{\eps})$ of
$\mathbb{C}$-(sub)linear maps $T_{\eps}:E_{\eps}\ra\mathbb{C}$ such
that 
\[
\forall(u_{\eps}),\,(v_{\eps})\in\mathcal{M}_{E}:\,(T_{\eps}(u_{\eps}))\in\mathcal{M}_{\mathbb{C}}\,\,\,\text{and}\,\,\,(u_{\eps}-v_{\eps})\in\mathcal{N}_{E}\Rightarrow\,(T_{\eps}(u_{\eps})-T_{\eps}(v_{\eps}))\in\mathcal{N}_{\mathbb{C}}
\]
and $T(u)=[T_{\eps}(u_{\eps})]$ for all $u=[u_{\eps}]\in\mathcal{G}_{E}$.
In the sequel, we write $T=[T_{\eps}]$ to say that $T$ is \textit{defined
by a net of representatives} $(T_{\eps})$.

Moreover, we say that the map $T$ is well-defined on $\mathcal{G}_{E_{J}}$
($J\subseteq\N$) if the latter holds with $E_{J}$ instead of $E$.
\end{defn}

It follows from the definition that $(T_{\eps})$ and $(T'_{\eps})$
are two representatives of a $\ctil$-(sub)linear map $T:\mathcal{G}_{E}\ra\ctil$
if and only if 
\[
\forall(u_{\eps})\in\mathcal{M}_{E}:\ (T_{\eps}(u_{\eps})-T'_{\eps}(u_{\eps}))\in\mathcal{N}_{\mathbb{C}}.
\]

We also recall from Sec.~1.1.2 of \cite{GaVe11} that a map $T:\mathcal{G}_{E}\ra\mathbb{C}$
is called basic if it has a representative $(T_{\eps})$ of continuous
linear maps $E_{\eps}\ra\mathbb{C}$ that satisfies the following
property 
\[
\exists J\subseteq\N\text{ finite}\,\exists N\in\N\,\forall^{0}\eps\,\forall u\in E_{\eps}:\,|T_{\eps}(u)|\leq\rho_{\eps}^{-N}\sum_{j\in J}p_{\eps j}(u).
\]
It is clear that any basic map is continuous, and in the following
proposition we prove that an arbitrary $\ctil$-linear map $T:\mathcal{G}_{E_{J}}\ra\ctil$
defined componentwise is continuous when $J$ is finite.
\begin{prop}
\label{prop:=000020representative=000020implies=000020continuous=000020}
Let $E=(E_{\eps},(p_{\eps n})_{n\in\N})$ be a family of locally convex
topological vector spaces, and let $T:\mathcal{G}_{E}\ra\ctil$ be
a $\ctil$-linear map with representatives. If there exists a finite
subset $J\subseteq\N$ such that $T$ is well defined on $\mathcal{G}_{E_{J}}$,
then $T$ is continuous on $\mathcal{G}_{E_{J}}$ and hence on $\mathcal{G}_{E}$.
\end{prop}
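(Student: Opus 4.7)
\emph{Proof plan.} By the continuity criterion recalled at the start of Sec.~\ref{subsec:LinMaps}, it is enough to exhibit a constant $C \in \rti_{\geq 0}$ and a finite set of indices $L_0 \subseteq \N$ such that $|T(u)| \leq C \max_{l \in L_0} p_l(u)$ for all $u \in \mathcal{G}_{E_J}$. Since $J$ is already finite, the natural choice is $L_0 := J$ and $C := \diff\rho^{-N}$ for a suitable $N \in \N$. Using that the $\rti$-max of finitely many $\rti$-seminorms is computed $\eps$-wise, the whole problem reduces to the $\eps$-wise bound
\[
\exists N \in \N\,\,\forall^{0}\eps\,\,\forall u \in E_\eps: \ |T_\eps(u)| \leq \rho_\eps^{-N} \max_{j \in J} p_{\eps j}(u). \qquad (\ast)
\]
This $(\ast)$ is the core of the argument.

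I would establish $(\ast)$ by contradiction. If it fails, then for every $N \in \N$ one extracts a strictly decreasing sequence $\eps_N \downarrow 0$ and vectors $u_N \in E_{\eps_N}$ with $|T_{\eps_N}(u_N)| > \rho_{\eps_N}^{-N} c_N$, where $c_N := \max_{j \in J} p_{\eps_N j}(u_N) \geq 0$; by the strict inequality we have $T_{\eps_N}(u_N) \neq 0$. If $c_N > 0$, I rescale to $w_{\eps_N} := u_N / c_N$ and set $w_\eps := 0$ for $\eps \notin \{\eps_N\}_N$; then $p_{\eps j}(w_\eps) \leq 1$ uniformly in $\eps$ and $j \in J$, so $(w_\eps) \in \mathcal{M}_{E_J}$, whereas $|T_{\eps_N}(w_{\eps_N})| > \rho_{\eps_N}^{-N}$ forces $(T_\eps(w_\eps)) \notin \mathcal{M}_{\mathbb{C}}$. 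If instead $c_N = 0$, I rescale to $w_{\eps_N} := u_N / T_{\eps_N}(u_N)$ and again $w_\eps := 0$ elsewhere; then every $p_{\eps j}(w_\eps)$ vanishes, so $(w_\eps) \in \mathcal{N}_{E_J}$, but $T_{\eps_N}(w_{\eps_N}) = 1$ for infinitely many $\eps$ makes $(T_\eps(w_\eps)) \notin \mathcal{N}_{\mathbb{C}}$. If both alternatives occur along the sequence, split it into two subsequences and run each argument separately. Each outcome contradicts Def.~\ref{def:maps=000020with=000020rep=000020} applied with $E_J$ in place of $E$, i.e.~the hypothesis that $T$ is well-defined on $\mathcal{G}_{E_J}$.

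Once $(\ast)$ is available, taking equivalence classes gives $|T(u)| \leq \diff\rho^{-N} \max_{j \in J} p_j(u)$ on $\mathcal{G}_{E_J}$, hence continuity there by the criterion. Continuity on $\mathcal{G}_E$ is then automatic: the canonical map $\pi: \mathcal{G}_E \to \mathcal{G}_{E_J}$ is continuous (its defining $\rti$-seminorms form a subfamily of those of $\mathcal{G}_E$), and on $\mathcal{G}_E$ one has $T = \tilde{T} \circ \pi$, where $\tilde{T}$ denotes $T$ viewed on $\mathcal{G}_{E_J}$. The main technical obstacle I anticipate is the bookkeeping in the contradiction step: the rescaling must place $(w_\eps)$ inside $\mathcal{M}_{E_J}$ (resp.~$\mathcal{N}_{E_J}$) with a \emph{uniform} moderateness exponent across the sequence, while simultaneously forcing $(T_\eps(w_\eps))$ to violate the corresponding moderateness (resp.~negligibility) condition—everything else is routine.
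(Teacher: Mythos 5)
Your plan is correct, but it takes a recognizably different route from the paper's. The paper never proves your homogeneous bound $(\ast)$; instead it establishes, by the same diagonal-extraction contradiction, the implication
\[
\forall n\in\N\,\exists m\in\N\,\forall^{0}\eps\,\forall u\in E_{\eps}:\ \max_{j\in J}p_{\eps j}(u)\leq\rho_{\eps}^{m}\Longrightarrow|T_{\eps}(u)|\leq\rho_{\eps}^{n},
\]
which translates directly into continuity of $T$ at the origin in the sharp topology; for this only the negligibility-preservation half of Def.~\ref{def:maps=000020with=000020rep=000020} is used, and no case distinction on whether $\max_{j\in J}p_{\eps j}(u)$ vanishes is needed. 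Your version proves the stronger statement that $T$ is \emph{basic} in the sense recalled just before the proposition, i.e.\ it admits the uniform $\eps$-wise operator bound $(\ast)$; the price is the two-case rescaling, and you correctly identify that the degenerate case $\max_{j\in J}p_{\eps j}(u_{N})=0$ (which cannot arise in the normed setting of \cite[Prop.~1.9]{GaVe11} that this proposition generalizes) must be handled separately by dividing by $T_{\eps_{N}}(u_{N})$ and contradicting negligibility-preservation, while the nondegenerate case contradicts moderateness-preservation. Both arguments are sound, and in fact by $\mathbb{C}$-homogeneity of $T_{\eps}$ and of the seminorms the two $\eps$-wise statements are equivalent, so your route buys the explicit basic-map bound at essentially no extra cost. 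The only points to tighten in a full write-up are the extraction step (you want a single non-increasing sequence $(\eps_{N})_{N}\downarrow 0$ with one pair $(\eps_{N},u_{N})$ per $N$, rather than ``a sequence for every $N$'') and the remark that it suffices to run the argument on whichever of the two subsequences is infinite.
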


\noindent The proposition generalizes \cite[Prop.~1.9]{GaVe11} where
$E$ is a family of normed spaces, so that the proof is very similar.
\begin{proof}
By assumption, the net $(T_{\eps})$ satisfies 
\begin{equation}
\forall(u_{\eps})\in\mathcal{N}_{E_{J}}:\,(T_{\eps}(u_{\eps}))\in\mathcal{N}_{\mathbb{C}}\label{eq:1.22-2}
\end{equation}
We claim that
\begin{equation}
\forall n\in\N\,\exists m\in\N\,\forall^{0}\eps\,\forall u\in E_{\eps}:\,\max_{j\in J}p_{\eps j}(u)\leq\rho_{\eps}^{m}\Longrightarrow|T_{\eps}(u)|\leq\rho_{\eps}^{n}.\label{eq:1.22-1}
\end{equation}
Indeed, if \eqref{eq:1.22-1} does not hold, then there exist $n'\in\N$
and a non-increasing sequence $(\eps_{m})_{m}\downarrow0$ as $m\to+\infty$,
and a sequence $(u_{\eps_{m}})_{m}$ of $\prod_{m\in\N}E_{\eps_{m}}$
such that 
\[
\forall m\in\N:\,\max_{j\in J}p_{\eps_{m}j}(u_{\eps_{m}})\leq\rho_{\eps_{m}}^{m}\,\,\,\text{and}\,\,\,|T_{\eps_{m}}(u_{\eps_{m}})|>\rho_{\eps_{m}}^{n'}.
\]
Define $u_{\eps}=u_{\eps_{m}}$ when $\eps=\eps_{m}$ and $u_{\eps}=0$
otherwise. By construction, we have that $(u_{\eps})\in\mathcal{N}_{E_{J}}$
and $\forall m\in\N:\,|T_{\eps_{m}}(u_{\eps_{m}})|>\rho_{\eps_{m}}^{n'}$,
which contradicts \eqref{eq:1.22-2}. Assertion \eqref{eq:1.22-1}
says that for all $n\in\N$ there exists $m\in\N$ such that $|T(u)|\leq\diff\rho^{n}$
if $\max_{j\in J}p_{j}(u)\leq\diff\rho^{m}$. Thus, $T$ is continuous
at the origin and hence continuous on $\mathcal{G}_{E_{J}}$.
\end{proof}
\textcolor{black}{Consider now a continuous $\ctil$-linear map $T=[T_{\eps}]:\mathcal{G}_{E}\ra\ctil$
where $E=(E_{\eps},||\cdot||_{\eps})$ is a family of normed spaces.
Then 
\begin{equation}
\exists\sup\{|T(x)|\mid x\in\mathcal{G}_{E},\,||x||\leq1\}=:|||T|||\in\rti\label{eq:SupNorm}
\end{equation}
and it is given by 
\begin{equation}
|||T|||=\left[\sup_{x\in E_{\varepsilon}\,||x||_{\varepsilon}\leq1}|T_{\varepsilon}(x)|\right]=:\left[|||T_{\eps}|||\right].\label{eq:norm}
\end{equation}
Indeed, from Prop.~1.10 of \cite{GaVe11}, there exists a representative
$(C_{\eps})$ of $C$ such that }
\begin{equation}
\forall^{0}\eps\,\forall x\in E_{\eps}:\,|T_{\eps}(x)|\leq C_{\eps}||x||_{\eps}.\label{eq:1.25}
\end{equation}
Setting 
\begin{equation}
|||T_{\eps}|||_{\eps}:=\sup_{x\in E_{\eps},||x||_{\eps}\leq1}|T_{\eps}(x)|.\label{eq:1.25-1}
\end{equation}
It follows from \eqref{eq:1.25} that $[|||T_{\eps}|||_{\eps}]\in\rti$.
Setting now $|||T|||:=[|||T_{\eps}|||_{\eps}]$. One can easily check
that \eqref{eq:SupNorm} follows from \eqref{eq:1.25} and \eqref{eq:1.25-1}.

\textcolor{black}{The set $I$ used in all the constructions above
can be replaced by any subset$S\subseteq(0,1]$ having $0$ as an
accumulation point. Such subsets are called }\textit{co-final in $I$
}\textcolor{black}{and we write $S\subseteq_{0}I$. However, when
$I$ is replaced by a co-final subset $S$ of $I$, the ring $\ctil$
will be denoted by $\ctil|_{S}$, the $\ctil|_{S}$-module $\mathcal{G}_{E}$
will be denoted by $\mathcal{G}_{E}|_{S}$, the $\ctil|_{S}$-linear
map $T$ will be denoted by $T|_{S}$, the order relation $\leq$
of $\rti$ will be denoted by $\leq_{S}$, the equivalence classes
will be denoted by $[\cdot]|_{S}$; similar notations are used for
other notions.}

\section{\textcolor{black}{\protect\label{sec:The-analytic-form}The analytic
form of Hahn-Banach's theorem}}

In this paper, we prove the extension property in two different cases:
in a Colombeau space based on a family of normed spaces (Thm.~\ref{thm:HBT}),
and in a Colombeau space based on a family of locally convex topological
vector spaces (Thm.~\ref{thm:HBT=000020LCTVS}). In the two cases,
the method is basically the same but the techniques used are different.
However, in both cases we simplify the problem to the $\eps$-level
and then we use the classical Hahn-Banach theorem to conclude. Moreover,
the maps $g$ and $p$ in \eqref{eq:hbt} are assumed to be defined
componentwise, and the $\rti$-submodule $\mathcal{F}$ is assumed
to be an internal set.

In all this section, an internal $\rti$-submodule is assumed to be
generated by a representative consisting of vector spaces.

\subsection{The Hahn-Banach theorem in a Colombeau space based on a family of
normed spaces}

In all this section, we assume that $E:=(E_{\eps},||\cdot||_{\eps})$
is a family of normed spaces.
\begin{thm}
\label{thm:HBT}Let $\mathcal{F}\subseteq\mathcal{G}_{E}$ be an $\rti$-submodule,$p:\mathcal{G}_{E}\ra\rti$
be an $\rti$-sublinear map and let $g:\text{\ensuremath{\mathcal{F}\longrightarrow\rti}}$
be an $\rti$-linear map satisfying 
\begin{equation}
\forall x\in\mathcal{F}:\,g(x)\leq p(x).\label{eq:bound-condition}
\end{equation}
Assume that $\mathcal{F}$ is internal, and both $p$ and $g$ have
representatives. Then, there exists an $\rti$-linear map $f:\mathcal{G}_{E}\ra\rti$
having representatives and satisfying 
\begin{equation}
\forall x\in\mathcal{F}:\,f(x)=g(x)\label{eq:extension1}
\end{equation}
and 
\begin{equation}
\forall x\in\mathcal{G}_{E}:\,f(x)\leq p(x).\label{eq:extension2}
\end{equation}
\end{thm}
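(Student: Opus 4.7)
The plan is to reduce the extension problem to the classical Hahn--Banach theorem applied at each $\eps$ separately, after first enforcing pointwise domination $g_\eps \le p_\eps$ on $F_\eps$ by a small, negligible modification of the representative of $p$. Fix representatives: a net $(F_\eps)$ of vector subspaces of $E_\eps$ generating $\mathcal{F}$ (available by the standing convention of the section), $\R$-linear maps $g_\eps : F_\eps \to \R$ defining $g$, and $\R$-sublinear maps $p_\eps : E_\eps \to \R$ defining $p$.

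The main obstacle is that the assumption $g(x) \le p(x)$ for $x \in \mathcal{F}$ is only an inequality in $\rti$, and hence merely provides, for each $x \in \mathcal{F}$ with a moderate representative $(x_\eps)$, a negligible net $(m_\eps^{x})$ with $g_\eps(x_\eps) - p_\eps(x_\eps) \le m_\eps^{x}$ for small $\eps$; it does not directly give the uniform pointwise inequality on $F_\eps$ required by the classical theorem. To bridge the gap I would introduce
\[
M_\eps := \sup\bigl\{ g_\eps(x) - p_\eps(x) : x \in F_\eps,\ ||x||_\eps \le 1 \bigr\} \ge 0
\]
and show $(M_\eps) \in \mathcal{M}_\R \cap \mathcal{N}_\R$. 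Moderateness follows by contradiction: if $(M_\eps)$ were super-moderate on a cofinal set, a diagonal extraction would produce a norm-bounded net $(x_\eps)$ with $x_\eps \in F_\eps$ along which $g_\eps(x_\eps) - p_\eps(x_\eps)$ is super-moderate, contradicting the fact that both $g$ and $p$ take values in $\rti$; the characterisation \eqref{eq:SupNorm}--\eqref{eq:norm} of the operator norm of a continuous $\ctil$-linear map, together with its sublinear analogue for $p$, underpins this step. For negligibility, fix $N \in \N$ and pick $y_\eps^{(N)} \in F_\eps$ with $||y_\eps^{(N)}||_\eps \le 1$ and $g_\eps(y_\eps^{(N)}) - p_\eps(y_\eps^{(N)}) \ge M_\eps - \rho_\eps^{N}$ (taking $y_\eps^{(N)} = 0$ whenever the right-hand side is non-positive). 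The net is moderate and lies in $F_\eps$ for every $\eps$, so $y^{(N)} := [y_\eps^{(N)}] \in \mathcal{F}$; applying the hypothesis to $y^{(N)}$ yields a negligible $(m_\eps^{(N)})$ with $g_\eps(y_\eps^{(N)}) - p_\eps(y_\eps^{(N)}) \le m_\eps^{(N)}$ for small $\eps$, whence $M_\eps \le m_\eps^{(N)} + \rho_\eps^{N}$. Since $N$ is arbitrary, $(M_\eps) \in \mathcal{N}_\R$.

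With $(M_\eps)$ negligible, define $\tilde p_\eps(x) := p_\eps(x) + M_\eps \, ||x||_\eps$ on $E_\eps$. This is sublinear; using positive homogeneity of $g_\eps$ and $p_\eps$ one lifts the unit-ball bound to obtain $g_\eps(x) \le \tilde p_\eps(x)$ for every $x \in F_\eps$; and the induced map $\tilde p : \mathcal{G}_E \to \rti$ coincides with $p$ in $\rti$. The classical Hahn--Banach theorem then extends each $g_\eps$ to an $\R$-linear functional $f_\eps : E_\eps \to \R$ with $f_\eps \le \tilde p_\eps$ on $E_\eps$. Setting $f := [f_\eps]$, the two-sided envelope $-\tilde p_\eps(-u) \le f_\eps(u) \le \tilde p_\eps(u)$ transfers moderateness and negligibility of nets from $\tilde p$ to $f_\eps$, so $f$ is a well-defined $\rti$-linear map with representatives. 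The equality $f|_{\mathcal{F}} = g$ is immediate from $f_\eps|_{F_\eps} = g_\eps$, and the bound $f(x) \le p(x)$ on $\mathcal{G}_E$ follows from $f_\eps \le \tilde p_\eps$ after passing to $\rti$ and identifying $\tilde p$ with $p$.
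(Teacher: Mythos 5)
Your proposal is correct and follows essentially the same route as the paper: reduce the hypothesis $g\le p$ to an $\eps$-wise inequality $g_{\eps}\le p_{\eps}+(\text{negligible net})\cdot||\cdot||_{\eps}$ on $F_{\eps}$, and then apply the classical Hahn--Banach theorem for each small $\eps$, transferring moderateness/negligibility of $(f_{\eps})$ via the two-sided sublinear envelope. The only cosmetic difference is that you produce the negligible correction directly as the supremum $M_{\eps}$ of the defect over the unit ball of $F_{\eps}$ (showing it moderate and negligible by applying \eqref{eq:bound-condition} to a glued net of near-maximizers), whereas the paper's Prop.~\ref{prop:reduce} obtains it by contradiction, subsequence extraction and normalization; both arguments hinge on exactly the same gluing step, so this is a reformulation rather than a different proof.
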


\noindent\textcolor{black}{The case where the map $g$ takes vales
in $\ctil$ can be treated in a very similar way.}

\textcolor{black}{The next proposition, which is inspired by Prop.
1.10 of \cite{GaVe11}, is used in order to reduce \eqref{eq:bound-condition}
at the $\eps$-level.}
\begin{prop}
\label{prop:reduce}Under the assumptions of Thm.~\ref{thm:HBT},
the following properties are equivalent:
\begin{enumerate}
\item \label{enu:reduce1}$\forall x\in\mathcal{F}:\,g(x)\leq p(x)$,
\item \label{enu:reduce2}For all representatives $(g_{\varepsilon})$ of
$g$, for all representatives $(p_{\varepsilon})$ of $p$, we have
\begin{equation}
\forall q\in\N\,\forall^{0}\eps\,\forall x\in F_{\varepsilon}:\,g_{\varepsilon}(x)\leq p_{\varepsilon}(x)+\rho_{\varepsilon}^{q}||x||_{\varepsilon}\label{eq:reduce2eq}
\end{equation}
where $(F_{\eps})$ is a representative of $\mathcal{F}$ made of
vector spaces,
\item \label{enu:reduce3}For all representatives $(g_{\varepsilon})$ of
$g$, there exists a representative $(p'_{\varepsilon})$ of $p$
such that 
\begin{equation}
\forall^{0}\varepsilon\,\forall x\in F_{\varepsilon}:\,g_{\varepsilon}(x)\leq p'_{\varepsilon}(x).\label{eq:reduce3eq}
\end{equation}
\end{enumerate}
\end{prop}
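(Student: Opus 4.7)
The plan is to establish the cycle of implications (iii) $\Rightarrow$ (i) $\Rightarrow$ (ii) $\Rightarrow$ (iii). The easy direction (iii) $\Rightarrow$ (i) is immediate from the internality of $\mathcal{F}$: given $x \in \mathcal{F}$, pick $(x_\eps) = x$ with $x_\eps \in F_\eps$ for $\eps$ small, apply (iii) pointwise to obtain $g_\eps(x_\eps) \leq p'_\eps(x_\eps)$ for $\eps$ small, and lift this to $g(x) \leq p(x)$ through the definition of $\leq$ on $\rti$.

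The direction (i) $\Rightarrow$ (ii) I would prove by contradiction, in the spirit of Prop.~1.10 of \cite{GaVe11}. Fix representatives $(g_\eps),\,(p_\eps)$ and suppose that for some $q \in \N$ there exist a sequence $\eps_n \downarrow 0$ and $x_n \in F_{\eps_n}$ with $g_{\eps_n}(x_n) > p_{\eps_n}(x_n) + \rho_{\eps_n}^q \|x_n\|_{\eps_n}$. Since $g_\eps$ is $\R$-linear and $p_\eps$ is $\R$-sublinear, both vanish at $0$, so $x_n \neq 0$ and $\|x_n\|_{\eps_n} > 0$. Using the blanket assumption that $F_{\eps_n}$ is a vector space, normalize $y_n := x_n / \|x_n\|_{\eps_n} \in F_{\eps_n}$; positive homogeneity then yields
\[
g_{\eps_n}(y_n) > p_{\eps_n}(y_n) + \rho_{\eps_n}^q, \qquad \|y_n\|_{\eps_n} = 1.
\]
Define $y_\eps := y_n$ if $\eps = \eps_n$ and $y_\eps := 0$ otherwise. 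Then $\|y_\eps\|_\eps \leq 1$ makes $(y_\eps)$ sharply bounded, and $y := [y_\eps] \in \mathcal{F}$. Applying (i) to $y$ and the characterization of $\leq$ on $\rti$ would force $g_\eps(y_\eps) \leq p_\eps(y_\eps) + \rho_\eps^{q+1}$ for $\eps$ small, which at $\eps = \eps_n$ contradicts the strict inequality above, since $\rho_{\eps_n}^q > \rho_{\eps_n}^{q+1}$ for $n$ large.

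The main obstacle is the step (ii) $\Rightarrow$ (iii), where a single representative $(p'_\eps)$ of $p$ must be fabricated out of the $q$-indexed family of $\eps$-wise bounds. I would use a diagonal calibration: given any $(g_\eps)$ and any representative $(p_\eps)$ of $p$, choose a decreasing sequence $\eps_q \downarrow 0$ such that (ii) holds for all $\eps \leq \eps_q$ with parameter $q$, and define $m_\eps := q$ for $\eps \in (\eps_{q+1}, \eps_q]$ and $m_\eps := 0$ for $\eps > \eps_1$, so that $m_\eps \to \infty$. Set
\[
p'_\eps(x) := p_\eps(x) + \rho_\eps^{m_\eps} \|x\|_\eps,
\]
which is $\R$-sublinear as a sum of two $\R$-sublinear maps and satisfies $g_\eps(x) \leq p'_\eps(x)$ on $F_\eps$ by construction. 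The verification that $(p'_\eps)$ still represents $p$ reduces to checking that $\rho_\eps^{m_\eps} \|u_\eps\|_\eps$ is negligible for every moderate $(u_\eps) \in \mathcal{M}_E$: since $\|u_\eps\|_\eps = O(\rho_\eps^{-N})$ for some $N$ and $m_\eps \to \infty$, for any $k \in \N$ one has $m_\eps \geq N + k$ for $\eps$ small enough, yielding $\rho_\eps^{m_\eps} \|u_\eps\|_\eps = O(\rho_\eps^k)$. The delicate balance of the construction is that $m_\eps$ must grow slowly enough to remain $\leq q$ on $(\eps_{q+1}, \eps_q]$ (so that (ii) is applicable and gives $g_\eps \leq p'_\eps$ on $F_\eps$) yet grow unboundedly (so that $(p'_\eps)$ differs from $(p_\eps)$ by a net negligible against arbitrary moderate $(u_\eps)$).
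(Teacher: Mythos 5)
Your proposal is correct and follows essentially the same route as the paper: the cycle (iii)$\Rightarrow$(i)$\Rightarrow$(ii)$\Rightarrow$(iii), with (i)$\Rightarrow$(ii) proved by contradiction via normalization and gluing a net from the bad subsequence, and (ii)$\Rightarrow$(iii) via the same diagonal calibration absorbing a negligible multiple of the norm into a new representative of $p$. The only (harmless, in fact slightly cleaner) deviation is that you fill in the net with $0$ off the subsequence, avoiding the paper's auxiliary assumption $F_{\eps}\neq\{0\}$ and its choice of unit vectors $u_{\eps}$.
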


\begin{proof}
\ref{enu:reduce1}$\ \Rightarrow\ $\ref{enu:reduce2}: In order to
prove this implication, we prove that the negation of \ref{enu:reduce2}
implies the negation of \ref{enu:reduce1}. More precisely, we should
prove that the negation of \ref{enu:reduce2} implies that there exist
$S\subseteq_{0}(0,1]$ and $x\in\mathcal{F}$ such that $g_{\eps}(x_{\eps})>p_{\eps}(x_{\eps})+\rho_{\eps}^{q}$
for some $q\in\N$ and for all $\eps\in S$. From 
\[
\exists(g_{\varepsilon})\,\exists(p_{\varepsilon})\,\exists q\in\mathbb{N}\,\forall\eta\in(0,1]\,\exists\varepsilon\in(0,\eta]\,\exists x_{\varepsilon}\in F_{\varepsilon}:\,g_{\varepsilon}(x_{\eps})>p_{\varepsilon}(x_{\varepsilon})+\rho_{\varepsilon}^{q}||x_{\varepsilon}||_{\varepsilon}
\]
we have that there exist a non-increasing sequence $(\varepsilon_{k})_{k}\downarrow0$
and a sequence $x_{\varepsilon_{k}}\in F_{\varepsilon_{k}}$ such
that 
\[
\forall k\in\N:\,g_{\varepsilon_{k}}\left(x_{\varepsilon_{k}}\right)>p_{\varepsilon_{k}}\left(x_{\varepsilon_{k}}\right)+\rho_{\varepsilon_{k}}^{q}\left\Vert x_{\varepsilon_{k}}\right\Vert _{\varepsilon_{k}}.
\]
Set $S:=\{\eps_{k}\mid k\in\N\}\subseteq_{0}(0,1]$. Since $p_{\eps_{k}}$
is sublinear, and $g_{\eps_{k}}$ is linear, we have that $x_{\varepsilon_{k}}\neq0$
for every $k\in\N$ and hence we can assume that $\left\Vert x_{\varepsilon_{k}}\right\Vert _{\varepsilon_{k}}=1$
for every $k$. Without loss of generality, we can assume that $F_{\eps}\ne\{0\}$
for all $\eps$. Let $(u_{\varepsilon})\in\prod_{\eps\in I}F_{\varepsilon}$
with $||u_{\varepsilon}||_{\varepsilon}=1$ for every $\varepsilon$.
The net $x_{\varepsilon}=x_{\varepsilon_{k}}$ when $\varepsilon=\varepsilon_{k}$
and $x_{\varepsilon}=u_{\varepsilon}$ otherwise generates an element
$x=[x_{\varepsilon}]\in\mathcal{F}$ with $\rti$-norm $1$. By construction,
for all $\eps\in S$ we have that 
\[
g_{\eps}(x_{\eps})>p_{\varepsilon}(x_{\varepsilon})+\rho_{\varepsilon}^{q}
\]
which contradicts \ref{enu:reduce1}.

\ref{enu:reduce2}$\ \Rightarrow\ $\ref{enu:reduce3}: Let's fix
a representative $(g_{\varepsilon})$ of $g$ and a representative
$(p_{\varepsilon})$ of $p$. From \ref{enu:reduce2} we can extract
a non-increasing sequence $(\eta_{q})_{q}$ converging to $0$ such
that 
\begin{equation}
\forall\varepsilon\in(0,\eta_{q}]\,\forall x\in F_{\varepsilon}:\,g_{\varepsilon}(x)\leq p_{\varepsilon}(x)+\rho_{\varepsilon}^{q}||x||_{\varepsilon}.
\end{equation}
The net $(n_{\eps})$ defined by $n_{\varepsilon}=\rho_{\varepsilon}^{q}$
for $\varepsilon\in(\eta_{q+1},\eta_{q}]$ is negligible. Since every
norm is a sublinear map, the net $(p'_{\eps})$ defined by $p'_{\varepsilon}:=p_{\varepsilon}+n_{\varepsilon}||\cdot||_{\varepsilon}$
for all $\eps$ is a representative of $p$ and \eqref{eq:reduce3eq}
holds for $\eta=\eta_{1}$.

\ref{enu:reduce3}$\ \Rightarrow\ $\ref{enu:reduce1}: This one is
clear.
\end{proof}
\begin{proof}[Proof of Thm.~\ref{thm:HBT}]
 Let $(g_{\varepsilon})$ be a representative of $g$. By \ref{enu:reduce3}
of Prop.~\ref{prop:reduce}, there exists a representative $(p_{\varepsilon})$
of $p$ such that 
\[
\forall^{0}\varepsilon\,\forall x\in F_{\varepsilon}:\,g_{\varepsilon}(x)\leq p_{\varepsilon}(x).
\]
Applying the classical Hahn-Banach theorem to $g_{\eps}$ for every
$\eps$ small, we obtain a family $(f_{\varepsilon})$ of linear maps$f_{\eps}:E_{\eps}\ra\R$
that satisfies 
\begin{equation}
\forall^{0}\varepsilon\,\forall x\in F_{\varepsilon}:\,f_{\varepsilon}(x)=g_{\varepsilon}(x)\label{eq:2.9}
\end{equation}
and 
\begin{equation}
\forall^{0}\varepsilon\,\forall x\in E_{\varepsilon}:\,f_{\varepsilon}(x)\leq p_{\varepsilon}(x).\label{eq:extension}
\end{equation}
Clearly, by \eqref{eq:extension} we have 
\[
\forall(u_{\eps})\in\mathcal{M}_{E}:\,(f_{\eps}(u_{\eps}))\in\mathcal{M}_{\mathbb{C}}\,\,\,\text{and}\,\,\,\forall(u_{\eps})\in\mathcal{N}_{E}:\,(f_{\eps}(u_{\eps}))\in\mathcal{N}_{\mathbb{C}},
\]
so we can consider the $\rti$-linear map $f:=[f_{\eps}]$, which
by \eqref{eq:2.9} and \eqref{eq:extension}, satisfies \eqref{eq:extension1}
and \eqref{eq:extension2}.
\end{proof}
\begin{cor}
\textcolor{black}{\label{cor:extension=000020continuous}Let $\mathcal{F}\subseteq\mathcal{G}_{E}$
be an internal $\rti$-submodule and let $g:\text{\ensuremath{\mathcal{F}\longrightarrow\rti}}$
be a continuous $\rti$-linear map with representatives. Then, there
exists a continuous $\rti$-linear map $f:\mathcal{G}_{E}\longrightarrow\rti$
with representatives that extends $g$ and satisfies 
\begin{equation}
|||g|||=|||f|||.\label{eq:norm2}
\end{equation}
}
\end{cor}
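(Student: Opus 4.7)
The plan is to apply Thm.~\ref{thm:HBT} to the canonical sublinear majorant $p(x):=|||g|||\cdot||x||$ and then extract continuity and the norm equality by inspecting the $\eps$-level construction inside its proof.

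First I would verify that $p$ meets the hypotheses of Thm.~\ref{thm:HBT}. Since $|||g|||\in\rti_{\geq 0}$ by \eqref{eq:SupNorm} and $||\cdot||$ is an $\rti$-norm, $p$ is $\rti$-sublinear. It is defined componentwise by $p_\eps(x):=|||g_\eps|||_\eps\cdot||x||_\eps$, and the routine moderateness and negligibility checks of Def.~\ref{def:maps=000020with=000020rep=000020} reduce to the fact that $(|||g_\eps|||_\eps)$ is moderate (a representative of $|||g|||$ by \eqref{eq:norm}) and that $|p_\eps(u_\eps)-p_\eps(v_\eps)|\leq|||g_\eps|||_\eps\cdot||u_\eps-v_\eps||_\eps$. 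The bound $g\leq p$ on $\mathcal{F}$ is then obtained from the $\eps$-wise estimate $g_\eps(x)\leq|g_\eps(x)|\leq|||g_\eps|||_\eps\cdot||x||_\eps=p_\eps(x)$ on $F_\eps$, which is just \eqref{eq:norm}, via Prop.~\ref{prop:reduce}\ref{enu:reduce3}.

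Thm.~\ref{thm:HBT} then furnishes an $\rti$-linear extension $f=[f_\eps]:\mathcal{G}_E\to\rti$ with representatives. Since this particular $p_\eps$ already dominates $g_\eps$ on $F_\eps$, the proof of that theorem (via the classical Hahn-Banach theorem applied $\eps$-wise) yields, for $\eps$ small, $f_\eps|_{F_\eps}=g_\eps$ and $f_\eps\leq p_\eps$ on $E_\eps$. Replacing $x$ by $-x$ gives $|f_\eps(x)|\leq|||g_\eps|||_\eps\cdot||x||_\eps$, whence $|||f_\eps|||_\eps\leq|||g_\eps|||_\eps$; and conversely the identity $f_\eps|_{F_\eps}=g_\eps$ forces
\[
|||f_\eps|||_\eps\ =\ \sup_{x\in E_\eps,\,||x||_\eps\leq 1}|f_\eps(x)|\ \geq\ \sup_{x\in F_\eps,\,||x||_\eps\leq 1}|g_\eps(x)|\ =\ |||g_\eps|||_\eps.
\]
Hence $|||f_\eps|||_\eps=|||g_\eps|||_\eps$ for $\eps$ small, which by \eqref{eq:norm} gives \eqref{eq:norm2} in $\rti$.

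Continuity of $f$ is essentially automatic: the bound $|f(x)|\leq|||g|||\cdot||x||$ obtained above implies it directly, and alternatively Prop.~\ref{prop:=000020representative=000020implies=000020continuous=000020} applies with $J=\{0\}$ (a finite index set) since $E$ is a family of normed spaces. I do not expect a serious obstacle; the only mildly delicate point is that I read off the equality $|||f|||=|||g|||$ from the $\eps$-level construction inside the proof of Thm.~\ref{thm:HBT} rather than from its bare statement. Everything else is a straightforward bookkeeping application of the theorem to the canonical majorant $p=|||g|||\cdot||\cdot||$.
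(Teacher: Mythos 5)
Your proposal is correct and follows essentially the same route as the paper: apply Thm.~\ref{thm:HBT} with the majorant $p(x)=|||g|||\cdot||x||$, deduce $|||f|||\leq|||g|||$ from the resulting bound (after replacing $x$ by $-x$), and get the reverse inequality because $f$ restricts to $g$ on $\mathcal{F}$, using \eqref{eq:norm}. The only cosmetic difference is that you carry out the norm comparison entirely at the $\eps$-level inside the proof of Thm.~\ref{thm:HBT}, whereas the paper phrases the same computation directly in terms of the suprema \eqref{eq:SupNorm}--\eqref{eq:norm} of generalized numbers.
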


\textcolor{black}{}
\begin{proof}
\textcolor{black}{Taking $p(x)=|||g|||\cdot||x||=[|||g_{\eps}|||\cdot||x_{\eps}||]$.
By Thm.~\ref{thm:HBT} there exists an $\rti$-linear map $f=[f_{\varepsilon}]:\mathcal{G}_{E}\longrightarrow\rti$
that extends $g$, and satisfies 
\begin{equation}
\forall x\in\mathcal{G}_{E}:\,f(x)\leq|||g|||\cdot||x||.\label{eq:2.11}
\end{equation}
From \eqref{eq:extension1} and from }\eqref{eq:norm}\textcolor{black}{{}
we have 
\begin{multline*}
|||g|||=\sup_{x\in\mathcal{F\thinspace}||x||\leq1}|g(x)|=\left[\sup_{x\in F_{\varepsilon}\thinspace||x||_{\varepsilon}\leq1}|g_{\varepsilon}(x)|\right]\\
=\left[\sup_{x\in F_{\varepsilon}\thinspace||x||_{\varepsilon}\leq1}|f_{\varepsilon}(x)|\right]\leq\left[\sup_{x\in E_{\varepsilon}\thinspace||x||_{\varepsilon}\leq1}|f_{\varepsilon}(x)|\right]=|||f|||.
\end{multline*}
On the other hand, the inequality $|||f|||\leq|||g|||$ follows from
}\eqref{eq:2.11}.
\end{proof}
\textcolor{black}{One of the classical applications of the Hahn-Banach
theorem is to prove that, for some given element $u$ of a normed
space $(E,||\cdot||)$, there exists $f\in E'$ (the topological dual
of $E$) such that 
\[
||f||_{E'}=||u||_{E}\text{ and }f(u)=||u||^{2}.
\]
The following theorem is a generalization of this result.}
\begin{cor}
\textcolor{black}{Let $u\in\mathcal{G}_{E}$. Then, there exists a
continuous $\rti$-linear map $f:\mathcal{G}_{E}\longrightarrow\rti$
with representatives such that 
\[
|||f|||=||u||\text{ and }f(u)=||u||^{2}.
\]
}
\end{cor}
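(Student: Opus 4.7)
The plan is to apply Cor.~\ref{cor:extension=000020continuous} to the canonical functional defined on the internal ``line through $u$''. Fix a representative $(u_\eps)$ of $u$, set $F_\eps := \R u_\eps \subseteq E_\eps$, and let $\mathcal{F}$ denote the internal $\rti$-submodule of $\mathcal{G}_E$ generated by $(F_\eps)$; by construction $u \in \mathcal{F}$.

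For each $\eps$, define $g_\eps \colon F_\eps \to \R$ by $g_\eps(\lambda u_\eps) := \lambda ||u_\eps||_\eps^2$ (setting $g_\eps \equiv 0$ when $u_\eps = 0$). On $F_\eps$ we have $|g_\eps(x)| = ||u_\eps||_\eps \cdot ||x||_\eps$, so $g_\eps$ is dominated by the sublinear map $||u_\eps||_\eps \cdot ||\cdot||_\eps$. The classical Hahn-Banach theorem then extends each $g_\eps$ to all of $E_\eps$ while preserving this bound; the extended net therefore sends moderate nets to moderate scalars and negligible to negligible. Hence $g := [g_\eps]$ defines a continuous $\rti$-linear map $\mathcal{F} \to \rti$ with representatives, and by construction $g(u) = [||u_\eps||_\eps^2] = ||u||^2$.

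Applying formula \eqref{eq:norm} to the family of normed spaces $(F_\eps, ||\cdot||_\eps)$, we get $|||g||| = [|||g_\eps|||_\eps]$; since $|g_\eps(x)| = ||u_\eps||_\eps \cdot ||x||_\eps$ on $F_\eps$ with equality attained (e.g.~at $x = u_\eps / ||u_\eps||_\eps$ whenever $u_\eps \ne 0$, and trivially otherwise), we obtain $|||g_\eps|||_\eps = ||u_\eps||_\eps$, hence $|||g||| = ||u||$. Finally, Cor.~\ref{cor:extension=000020continuous} produces a continuous $\rti$-linear map $f \colon \mathcal{G}_E \to \rti$ with representatives that extends $g$ and satisfies $|||f||| = |||g||| = ||u||$; since $u \in \mathcal{F}$, one concludes $f(u) = g(u) = ||u||^2$.

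The only delicate point is arranging that $g$ qualifies as a ``map with representatives'' in the strict sense of Def.~\ref{def:maps=000020with=000020rep=000020}: the natural pointwise definition lives only on $F_\eps$, and one must extend it off $F_\eps$ in a way that preserves moderateness and independence of representative, and also accommodates the exceptional $\eps$'s where $u_\eps = 0$. The Hahn-Banach extension against the dominating sublinear map $||u_\eps||_\eps \cdot ||\cdot||_\eps$ accomplishes this cleanly and also yields the norm computation in one stroke.
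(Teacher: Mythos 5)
Your proposal is correct and follows essentially the same route as the paper: apply Cor.~\ref{cor:extension=000020continuous} to the internal submodule $\mathcal{F}=[\R u_{\eps}]$ with $g_{\eps}(\lambda u_{\eps})=\lambda\|u_{\eps}\|_{\eps}^{2}$, compute $|||g|||=\|u\|$ via \eqref{eq:norm}, and read off $f(u)=g(u)=\|u\|^{2}$. The extra care you take in checking that $g$ is a well-defined map with representatives (moderate to moderate, negligible to negligible, and the $u_{\eps}=0$ case) is a legitimate elaboration of details the paper's one-line proof leaves implicit.
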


\begin{proof}
\textcolor{black}{Use Cor.~\ref{cor:extension=000020continuous}
with $\mathcal{F}=[\mathbb{R}u_{\varepsilon}]$ where $[u_{\varepsilon}]=u$,
$g_{\eps}:\R u_{\eps}\ra\R$, $\lambda u_{\eps}\ra\lambda||u_{\eps}||_{\eps}^{2}$
so that $g([\lambda_{\eps}u_{\eps}])=[\lambda_{\eps}||u||_{\eps}^{2}]$,
so that $|||g|||=||u||$.}
\end{proof}
\textcolor{black}{This result has been already proved in Prop.~3.23
of \cite{Gar05} without using Thm.~\ref{thm:HBT}.}

\subsection{The Hahn-Banach theorem in a Colombeau space based on a family of
locally convex topological vector spaces}

In all this section, we assume that $E:=(E_{\eps},(p_{\eps i})_{i\in\N})$
is a family of locally convex topological vector spaces.
\begin{thm}
\label{thm:HBT=000020LCTVS}Let $\mathcal{F}=[F_{\eps}]\subseteq\mathcal{G}_{E}$
be an internal $\rti$-submodule, and let $g=[g_{\eps}]:\text{\ensuremath{\mathcal{F}\longrightarrow\rti}}$
be an $\rti$-linear map. Assume that there exists $i\in\N$ such
that $g$ is well defined on $\mathcal{F}_{i}$ and 
\begin{equation}
\exists C\in\rti_{>0}\,\forall x\in\mathcal{F}_{i}:\,g(x)\leq Cp_{i}(x).\label{eq:bound-condition-1}
\end{equation}
Then, there exists an $\rti$-linear map $f=[f_{\eps}]:\mathcal{G}_{E_{i}}\ra\rti$
that extends $g$ and satisfies 
\[
\forall x\in\mathcal{G}_{E_{i}}:\,f(x)\leq Cp_{i}(x).
\]
\end{thm}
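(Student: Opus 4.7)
The plan is to follow the strategy used for Thm.~\ref{thm:HBT}: first reduce the $\rti$-valued inequality \eqref{eq:bound-condition-1} to an $\eps$-wise inequality with a \emph{single} chosen representative of $C$, then apply the classical Hahn-Banach theorem to each $g_\eps$ on the vector space $F_\eps$, and finally verify that the resulting net defines an $\rti$-linear map on $\mathcal{G}_{E_i}$ with the desired properties.

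The key technical ingredient is an analog of Prop.~\ref{prop:reduce} adapted to this setting, namely: for every representative $(g_\eps)$ of $g$ there is a representative $(C'_\eps)$ of $C$ with $g_\eps(x)\leq C'_\eps\, p_{\eps i}(x)$ for all $x\in F_\eps$ and all $\eps$ small. I would first show, by contradiction in the style of Prop.~\ref{prop:reduce}, that for any fixed representatives $(g_\eps)$ of $g$ and $(C_\eps)$ of $C$, and for each $q\in\N$, the bound $g_\eps(x)\leq (C_\eps+\rho_\eps^{q})\, p_{\eps i}(x)$ holds on $F_\eps$ for $\eps$ small. If this failed, a sequence $\eps_k\downarrow 0$ and $x_{\eps_k}\in F_{\eps_k}$ would produce violators; when $p_{\eps_k i}(x_{\eps_k})>0$ infinitely often we normalize to $p_{\eps_k i}=1$ and extend by $0$ outside $S:=\{\eps_k\}$ to obtain $\tilde x\in\mathcal{F}_i$ for which $g(\tilde x)|_S > (C+\diff\rho^{q})p_i(\tilde x)|_S$, contradicting \eqref{eq:bound-condition-1}; when $p_{\eps_k i}(x_{\eps_k})=0$ infinitely often, linearity forces $g_{\eps_k}(x_{\eps_k})>0$, and after rescaling so that $g_{\eps_k}(\tilde x_{\eps_k})=1$ we get $\tilde x\in\mathcal{F}_i$ with $p_i(\tilde x)|_S=0$ but $g(\tilde x)|_S=1$, again a contradiction. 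Then, as in the (ii)$\Rightarrow$(iii) step of Prop.~\ref{prop:reduce}, I would extract $\eta_q\downarrow 0$, set $n_\eps:=\rho_\eps^{q}$ for $\eps\in(\eta_{q+1},\eta_q]$, and define $C'_\eps:=C_\eps+n_\eps$; this gives a representative of $C$ satisfying $g_\eps\leq C'_\eps p_{\eps i}$ on $F_\eps$.

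Once this reduction is in hand, the rest is routine: applying the classical Hahn-Banach theorem to the sublinear majorant $C'_\eps p_{\eps i}$ on each $E_\eps$ yields linear extensions $f_\eps:E_\eps\to\R$ with $f_\eps|_{F_\eps}=g_\eps$ and $f_\eps(x)\leq C'_\eps p_{\eps i}(x)$ on $E_\eps$. Since $p_{\eps i}$ is a seminorm we also have $|f_\eps(x)|\leq C'_\eps p_{\eps i}(x)$, and this single bound handles both the moderation of $(f_\eps(u_\eps))$ for $(u_\eps)\in\mathcal{M}_{E_i}$ and the negligibility for $(u_\eps)\in\mathcal{N}_{E_i}$. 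Hence $f:=[f_\eps]$ is a well-defined $\rti$-linear map on $\mathcal{G}_{E_i}$ which extends $g$, and passing the pointwise bound to equivalence classes gives $f(x)\leq Cp_i(x)$ since $(C'_\eps)\equiv(C_\eps)$ in $\rti$.

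The main obstacle is step one of the reduction. Unlike the normed case of Prop.~\ref{prop:reduce}, where nonzero vectors have strictly positive norm and one can freely normalize $\|x_\eps\|=1$, the seminorm $p_{\eps i}$ may vanish on nonzero vectors; this forces the separate ``degenerate'' case above, in which the linearity of $g_\eps$ is exploited to produce an element $\tilde x\in\mathcal{F}_i$ with $p_i(\tilde x)$ identically zero on a co-final set but $g(\tilde x)$ bounded away from zero there. Carefully verifying that the resulting $\tilde x$ lies in $\mathcal{F}_i$ (which uses that the representative $(F_\eps)$ consists of vector spaces, as assumed in this section) and that the inequality violation survives the quotient by $\mathcal{N}_E$ is the delicate point of the argument.
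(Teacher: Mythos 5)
Your proposal is correct and follows essentially the same route as the paper: reduce \eqref{eq:bound-condition-1} to an $\eps$-wise bound via a contradiction/normalization argument, handle the degenerate case $p_{\eps i}(x_{\eps_k})=0$ by rescaling with $g_{\eps_k}(x_{\eps_k})$ (using that the $F_\eps$ are vector spaces) to produce an element of $\mathcal{F}_i$ restricted to a co-final set that violates the hypothesis, and then apply the classical Hahn--Banach theorem at each $\eps$. The only cosmetic difference is organizational: the paper first establishes the bound on $(F_\eps\setminus\mathrm{Ker}(p_{\eps i}))\cup\{0\}$ and then proves separately that $g_\eps$ vanishes on $\mathrm{Ker}(p_{\eps i})$ for $\eps$ small, whereas you fold both cases into a single contradiction argument.
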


\begin{proof}
Consider the following two properties
\begin{enumerate}
\item \label{enu:HBT-LCVTS1}for any representative $(g_{\eps})$ of $g$,
for any representative $(C_{\eps})$ of $C$, for all $q\in\N$, there
exists $\eta_{q}$ 
\[
\forall\eps\in(0,\eta_{q}]\,\forall x\in(F_{\eps}\backslash\mathrm{Ker}(p_{\eps i}))\cup\{0\}:\,g_{\eps}(x)\leq(C_{\eps}+\rho_{\eps}^{q})p_{\eps i}(x)
\]
\item \label{enu:HBT-LCVTS2}for any representative $(g_{\eps})$ there
exists a representative $(C'_{\eps})$ of $C$ such that 
\[
\forall^{0}\eps\,\forall x\in(F_{\eps}\backslash\mathrm{Ker}(p_{\eps i}))\cup\{0\}:\,g_{\eps}(x)\leq C'_{\eps}p_{\eps i}(x).
\]
\end{enumerate}
Similarly to Prop.~\ref{prop:reduce}, we have that \eqref{eq:bound-condition-1}$\Rightarrow$\ref{enu:HBT-LCVTS1}$\Rightarrow$\ref{enu:HBT-LCVTS2}.
Indeed, if \ref{enu:HBT-LCVTS1} does not hold, then there exist a
representative $(g_{\eps})$ of $g$, a representative $(C_{\eps})$
of $C$, a sequence $(\eps_{k})$ non-increasing and converging to
$0$, a sequence $(x_{\eps_{k}})$ such that 
\[
\forall k\in\N:\,x_{\eps_{k}}\in(F_{\eps_{k}}\backslash\mathrm{Ker}(p_{\eps_{k}i}))\cup\{0\},\,\,\,g_{\eps_{k}}(x_{\eps_{k}})>(C_{\eps_{k}}+\rho_{\eps_{k}}^{q})p_{\eps_{k}i}(x_{\eps_{k}}).
\]
Clearly, $x_{\eps_{k}}\neq0$ for all $k$, so we can assume that
$p_{\eps_{k}i}(x_{\eps_{k}})=1$ for all $k$. It follows that $[x_{\eps_{k}}]|_{L}\in\mathcal{F}_{i}|_{L}$
and $[g_{\eps_{k}}(x_{\eps_{k}})]|_{L}\geq[C_{\eps_{k}}+\rho_{\eps_{k}}^{q}]|_{L}$
where $L:=\{\eps_{k}\mid k\in\N\}\subseteq_{0}I$, which contradicts
\eqref{eq:bound-condition-1}. Therefore, the first implication is
proved. The implication \ref{enu:HBT-LCVTS1}$\Rightarrow$\ref{enu:HBT-LCVTS2}
is simple and can be proved exactly as in Prop~\ref{prop:reduce}.
Without loss of generality we assume that $\mathrm{Ker}(p_{\eps i})\subseteq F_{\eps}$
for all $\eps$. We claim that 
\begin{equation}
\forall^{0}\eps\,\forall x\in\mathrm{Ker}(p_{\eps i})\backslash\{0\}:\,g_{\eps}(x)=0.\label{eq:3.13}
\end{equation}
Indeed, if the claim does not hold then there exist a sequence $(\eps_{k})$
non-increasing and converging to $0$ and a sequence $(x_{\eps_{k}})$
such that 
\[
\forall k\in\N:\,x_{\eps_{k}}\in\mathrm{Ker}(p_{\eps_{k}i})\backslash\{0\},\,\,\,g_{\eps_{k}}(x_{\eps_{k}})\neq0.
\]
It follows that $y_{\eps_{k}}:=x_{\eps_{k}}/g_{\eps_{k}}(x_{\eps_{k}})\in\mathrm{Ker}(p_{\eps_{k}i}),$
and hence $[y_{\eps_{k}}]|_{S}\in\mathcal{F}_{i}|_{S}$ where $S:=\{\eps_{k}\mid k\in\N\}\subseteq_{0}I$.
Thus, using \eqref{eq:bound-condition-1} with $[y_{\eps_{k}}]_{S}$
leads to a contradiction. Therefore, the claim holds. Using now \ref{enu:HBT-LCVTS2}
and \eqref{eq:3.13} we get 
\[
\forall^{0}\eps\,\forall x\in F_{\eps}:\,g_{\eps}(x)\leq C'_{\eps}p_{\eps i}(x).
\]
We use now the classical Hahn-Banach theorem to conclude exactly as
we did in the proof of Thm.~\ref{thm:HBT}.
\end{proof}
\begin{rem}
~
\begin{enumerate}
\item Clearly, Thm.~\ref{thm:HBT=000020LCTVS} holds when the $i$ is replaced
by any finite subset $J\subseteq\N$.
\item Since any element of $\mathcal{G}_{E}$ defines an element of $\mathcal{G}_{E_{J}}$
for all $J\subseteq\N$, the extension map $f$ of $g$ in Thm.~\ref{thm:HBT=000020LCTVS}
is also defined on $\mathcal{G}_{E}$.
\end{enumerate}
\end{rem}

\section{\textcolor{black}{Geometric forms of the Hahn-Banach theorem: Separation
of convex sets}}

As it is well known, one of the classical applications of the Hahn-Banach
theorem in convex geometry is the separation of disjoint and convex
sets in a normed vector space $E$ by a closed hyperplane. The main
goal of this section is to generalize this property in spaces of nonlinear
generalized functions.

The following definitions are generalizations of the classical definitions
of hyperplane, convex set, disjoint sets and non-empty set of an $\rti$-module
$\mathcal{G}$.

\textcolor{black}{In all this section $E:=(E_{\eps},||\cdot||_{\eps})$
is a family of normed spaces.}
\begin{defn}
\label{def:basic-def}Let $\mathcal{G}$ be an arbitrary $\rti$-module,
$A$, $B\subseteq\mathcal{G}$ be two subsets, $\alpha\in\rti$, and
let $f:\mathcal{G}\longrightarrow\rti$ be an $\rti$-linear map
\begin{enumerate}
\item \textcolor{black}{\label{enu:non-identically=000020map=000020}We
say that $f$ is }\textcolor{black}{\emph{non-identically null}}\textcolor{black}{{}
if 
\[
\exists x\in\mathcal{G}:\,|f(x)|>0.
\]
We recall that in $\rti$, $x>0$ means $x\ge0$ and $x$ invertible.
If $\mathcal{G}$ is a Colombeau space $\mathcal{G}_{E}$ based on
a family of normed spaces, and $f=[f_{\varepsilon}]$, then $f$ being
non identically null implies that $f_{\varepsilon}$ is non-identically
null for every $\varepsilon$ small, i.e.~$\forall^{0}\eps\,\exists x\in E_{\eps}:\,f_{\eps}(x)\ne0$.}
\item \textcolor{red}{\label{enu:hyperplane}}\textcolor{black}{A }\textcolor{black}{\emph{hyperplane}}\textcolor{black}{{}
$\tilde{H}$ of $\mathcal{G}$ is a set of the form 
\[
\tilde{H}=\{u\in\mathcal{G}\mid f(u)=\alpha\}
\]
where $f$ is linear and non-identically null.}
\item \label{enu:convexity}We say that $A$ is \emph{convex} if 
\[
\forall x,y\in A\,\forall t\in[0,1]\subseteq\rti:\,tx+(1-t)y\in A.
\]
\item \label{enu:strongly=000020disjoint=000020sets}We say that $A$ and
$B$ are \emph{strongly disjoint} if 
\begin{equation}
\forall L\subseteq_{0}I:\,A|_{L}\cap B|_{L}=\emptyset,\label{eq:strongly_disjoint_sets}
\end{equation}
or equivalently, 
\[
\forall e\in\rti:\,e^{2}=e,\,e\neq0\Rightarrow eA\cap eB=\emptyset
\]
We recall that for $L$, $K\subseteq I$, we say $L\subseteq_{0}K$
if $L\subseteq K$ and $0\in\overline{L}$.
\item \label{enu:no-empty=000020set}$A$ is said to be \emph{strongly non-empty}
if 
\begin{equation}
\forall L\subseteq_{0}I:\,A|_{L}\neq\emptyset
\end{equation}
\end{enumerate}
\end{defn}

\begin{example}
\label{exa:example}Let $\mathcal{G}=\mathcal{G}_{E}$.
\begin{enumerate}
\item \label{enu:ex13}If $A=[A_{\varepsilon}]$ or $A=\left\langle A_{\varepsilon}\right\rangle $
or $A=[A_{\varepsilon}]|_{L}+\left\langle A_{\varepsilon}\right\rangle |_{L^{c}}$
where $L$, $L^{c}\subseteq_{0}I$ and $A_{\varepsilon}\subseteq E_{\varepsilon}$
is convex for every $\varepsilon$ small then $A$ is convex.
\item Let $(A_{\eps})$ and $(B_{\eps})$ be two nets of convex sets of
$\prod_{\eps\in I}E_{\eps}$. Then $[A_{\eps}]\cup[B_{\eps}]$ is
not necessary convex even if $A_{\eps}\cup B_{\eps}$ is convex for
all $\eps$ small. As an example, take $L\subset I$ such that $0\in\overline{L}$,
$0\in\overline{L^{c}}$ and set 
\[
\forall\varepsilon\in L:\,A_{\varepsilon}:=[0,2],\,B_{\eps}:=[2,3],\,\text{and }\forall\varepsilon\in L^{c}:\,A_{\eps}:=[0,1],\,B_{\eps}:=[1,3].
\]
Taking $x=1\in A$, $y=2\in B$ but $\frac{1}{2}x+\frac{1}{2}y\not\in[A_{\eps}]\cup[B_{\eps}]$.
\item \textcolor{black}{If $A\subseteq\R\subseteq\rti$ then $A$ is not
necessary convex even if $A$ is convex in $\R$.}
\end{enumerate}
\end{example}

We have seen in \ref{enu:ex13} of Example \ref{exa:example} that
if an internal or a strongly internal subset $A\subseteq\mathcal{G}_{E}$
has a convex representative then $A$ is convex. One could ask if
the reciprocal is true. The answer is given in the following lemma
\begin{lem}
\textcolor{black}{\label{lem:conv=000020repr}Let $A$ be a non-empty
convex subset of $\mathcal{G}_{E}$ and let $x$, $y\in A$.}
\begin{enumerate}
\item \label{enu:=000020strongly=000020inter.=000020conx=000020rep}Assume
$A=\sint{A_{\eps}}$ is a strongly internal set. Then 
\begin{equation}
\forall[x_{\eps}]=x\,\forall[y_{\eps}]=y\,\forall^{0}\eps\,\forall t\in[0,1]:\,x_{\eps}+t(y_{\eps}-x_{\eps})\in A_{\eps}.\label{eq:strongly_inter_conx_rep}
\end{equation}
\item \label{enu:internal=000020conv.=000020rep}Assume $A$ is a sharply
bounded internal set and let $(A_{\eps})$ be a sharply bounded representative
of $A$. Then, there exists a sharply bounded representative $(\tilde{A}_{\eps})$
such that 
\begin{equation}
\forall^{0}\eps\,\forall u,v\in A_{\eps}\,\forall\lambda\in[0,1]:\,u+\lambda(v-u)\in\tilde{A}_{\eps}\label{eq:2.31}
\end{equation}
In other words, for all $\eps$ small, any segment whose endpoints
belongs to $A_{\eps}$ is included in $\tilde{A}_{\eps}$. 
\item \textcolor{black}{\label{enu:internal=000020conx=000020repr=000020finite=000020dim.}If
$A$ is a sharply bounded internal set and if there exists $n\in\N$
such that $\forall^{0}\eps:\text{dim}(E_{\eps})\leq n$, then A has
a convex representative.}
\end{enumerate}
\end{lem}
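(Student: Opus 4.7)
I will handle the three parts sequentially, with (iii) reduced to iterated application of (ii). Parts (i) and (ii) both exploit the convexity of $A$ (in the sense of Definition~\ref{def:basic-def}\ref{enu:convexity}) after lifting componentwise data to generalized numbers via sharp boundedness.

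For (i), I argue by contradiction. If \eqref{eq:strongly_inter_conx_rep} fails, one obtains representatives $[x_\eps] = x$, $[y_\eps] = y$, a sequence $\eps_k \downarrow 0$, and scalars $t_k \in [0,1]$ with $x_{\eps_k} + t_k(y_{\eps_k} - x_{\eps_k}) \notin A_{\eps_k}$. Setting $t_\eps := t_k$ for $\eps = \eps_k$ and $t_\eps := 0$ otherwise produces $t := [t_\eps] \in [0,1] \subseteq \rti$, and convexity of $A$ yields $z := x + t(y - x) \in A$. But $A$ is \emph{strongly} internal, so every representative of $z$ --- in particular $z_\eps := x_\eps + t_\eps(y_\eps - x_\eps)$ --- must eventually lie in $A_\eps$, contradicting $z_{\eps_k} \notin A_{\eps_k}$ on a cofinal subset. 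The crucial ingredient is the universal quantifier on representatives built into the definition of strongly internal set.

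For (ii), I set $\tilde A_\eps := \{u + \lambda(v - u) \mid u,v \in A_\eps,\ \lambda \in [0,1]\}$, the union of all classical segments with endpoints in $A_\eps$. It contains $A_\eps$, satisfies \eqref{eq:2.31} tautologically, and inherits sharp boundedness from $(A_\eps)$ by the triangle inequality. The work is to show $[\tilde A_\eps] = A$: the inclusion $A \subseteq [\tilde A_\eps]$ is immediate, while for the converse, any $w = [w_\eps]$ with $w_\eps = u_\eps + \lambda_\eps(v_\eps - u_\eps)$, $u_\eps, v_\eps \in A_\eps$, $\lambda_\eps \in [0,1]$, lifts to $u := [u_\eps], v := [v_\eps] \in A$ (using sharp boundedness for moderateness and internality of $A$) and $\lambda := [\lambda_\eps] \in [0,1] \subseteq \rti$; convexity of $A$ then gives $w = (1-\lambda)u + \lambda v \in A$.

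For (iii), I iterate (ii). Setting $A_\eps^{(0)} := A_\eps$ and letting $A_\eps^{(k+1)}$ be the representative produced by (ii) applied to $A_\eps^{(k)}$, a straightforward induction on $k$ shows that $A_\eps^{(k)}$ contains every convex combination of at most $2^k$ elements of $A_\eps$ (splitting each such combination into two halves of size $2^{k-1}$) and remains contained in $\mathrm{conv}(A_\eps)$. Carathéodory's theorem in dimension at most $n$ then implies $\mathrm{conv}(A_\eps) \subseteq A_\eps^{(k)}$ as soon as $2^k \geq n+1$, so for $k := \lceil \log_2(n+1) \rceil$ one has $A_\eps^{(k)} = \mathrm{conv}(A_\eps)$, which is convex. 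The main obstacle is realising that the purely pairwise statement (ii), iterated $O(\log n)$ times, already reaches the full convex hull precisely because Carathéodory caps the number of needed vertices at $n+1$; without the finite-dimensionality assumption, this strategy would not terminate.
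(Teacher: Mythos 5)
Your proof is correct, and while part (i) coincides with the paper's argument (the same contradiction via a mixed net $\lambda_\eps:=t_{\eps_k}$ and the universal quantifier over representatives in the definition of a strongly internal set), parts (ii) and (iii) take a genuinely different route. For (ii), the paper does not use the union of segments: it first proves, by contradiction and using that $A$ is convex and internal, the approximate statement that every segment with endpoints in $A_\eps$ lies in the thickening $A_\eps+B_{\rho_\eps^{q}}(0)$, then glues these thickenings over a diagonal sequence $(\eta_q)$ and invokes the Hausdorff-distance criterion (Thm.~3.8 of \cite{ObVe08}) to see that the thickened net still represents $A$. Your choice of $\tilde A_\eps:=\{(1-\lambda)u+\lambda v\mid u,v\in A_\eps,\ \lambda\in[0,1]\}$ avoids the thickening and the Hausdorff-distance theorem entirely: the inclusion $[\tilde A_\eps]\subseteq A$ follows directly from lifting $u_\eps,v_\eps,\lambda_\eps$ to $u,v\in A$, $\lambda\in[0,1]\subseteq\rti$ (sharp boundedness giving moderateness) and applying convexity of $A$ --- this is more elementary, and it has the additional virtue that $\tilde A_\eps\subseteq\mathrm{conv}(A_\eps)$, which is exactly what your part (iii) needs. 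For (iii) the paper simply cites Lem.~17 of \cite{GKV15}, whereas your iteration of (ii) combined with Carath\'eodory's bound of $n+1$ vertices gives a self-contained proof; the only (routine) point left implicit is that $\dim(E_\eps)\le n$ and the equality $A^{(k)}_\eps=\mathrm{conv}(A_\eps)$ hold only for $\eps$ small, so the representative must be adjusted arbitrarily (e.g.\ replaced by its convex hull) on the remaining $\eps$, which does not affect the generated internal set.
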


\begin{proof}
\textcolor{black}{\ref{enu:=000020strongly=000020inter.=000020conx=000020rep}:
}First note that since $A$ is a strongly internal set, for all representatives
$[x_{\eps}]=x$, $[y_{\eps}]=y$, and $[t_{\eps}]=t\in[0,1]$, there
exists $\eps_{0}$ such that $x_{\eps}+t_{\eps}(y_{\eps}-x_{\eps})\in A_{\eps}$
for all $\eps\in(0,\eps_{0}]$ but where, unlike \eqref{eq:strongly_inter_conx_rep},
$\eps_{0}$ depends on $t$.

\textcolor{black}{If \eqref{eq:strongly_inter_conx_rep} is not satisfied
then for some representatives $[x_{\eps}]=x$, $[y_{\eps}]=y$, there
exists a non-increasing sequence $(\varepsilon_{k})_{k}$ that tends
to $0$, and $t_{\eps_{k}}\in[0,1]$ such that 
\begin{equation}
\forall k\in\mathbb{N}:\,x_{\varepsilon_{k}}+t_{\varepsilon_{k}}(y_{\varepsilon_{k}}-x_{\eps_{k}})\not\in A_{\varepsilon_{k}}.\label{eq:weak=000020conv2}
\end{equation}
Set $\lambda_{\eps}:=t_{\eps_{k}}$ if $\eps=\eps_{k}$ and $\lambda_{\eps}:=0$
otherwise. Then, since $A$ is a strongly internal set, every representative
of $x+\lambda(y-x)$ belongs to $A_{\eps}$ for $\eps$ small, which
contradicts \eqref{eq:weak=000020conv2} at $\eps=\eps_{k}$.}

\textcolor{black}{\ref{enu:internal=000020conv.=000020rep}: Note
first that when $A$ is a sharply bounded internal set, then it has
a sharply bounded representative (see Lem.~3.3 of \cite{ObVe08}).
Let $(A_{\eps})$ be a sharply bounded representative of $A$. We
claim now that 
\begin{multline}
\forall q\in\N\,\exists\eta_{q}\in I\,\forall\eps\in(0,\eta_{q}]\,\forall u,v\in A_{\eps}\\
\,\forall\lambda\in[0,1]:\,u+\lambda(v-u)\in A_{\eps}+B_{\rho_{\eps}^{q}}(0).\label{eq:2.30}
\end{multline}
Indeed, assume that \eqref{eq:2.30} does not hold, then there exists
$q\in\N$, there exists a non-increasing sequence $(\eps_{k})_{k}$
of $I$ that tends to $0$, there exist $(u_{\eps_{k}})_{k},$$(v_{\eps_{k}})_{k}$,
$(\lambda_{\eps_{k}})_{k}$ where $u_{\eps_{k}},v_{\eps_{k}}\in A_{\eps_{k}}$
and $\lambda_{\eps_{k}}\in[0,1]$ for all $k\in\N$ such that 
\[
u_{\eps_{k}}+\lambda_{\eps_{k}}(v_{\eps_{k}}-u_{\eps_{k}})\not\in A_{\eps_{k}}+B_{\rho_{\eps_{k}}^{q}}(0).
\]
It follows that 
\begin{equation}
\forall k\in\N:\,B_{\rho_{\eps_{k}}^{q+1}}(u_{\eps_{k}}+\lambda_{\eps_{k}}(v_{\eps_{k}}-u_{\eps_{k}}))\subseteq A_{\eps_{k}}^{c}.\label{eq:2.33}
\end{equation}
Take $u=[u_{\eps}]$, $v=[v_{\eps}]\in A$ and $t=[t_{\eps}]\in[0,1]$
with $u_{\eps}=u_{\eps_{k}}$, $v_{\eps}=v_{\eps_{k}}$ and $t_{\eps}=\lambda_{\eps_{k}}$
when $\eps=\eps_{k}$. This is possible since the representative $(A_{\eps})$
is sharply bounded. Since $A$ is convex and internal, there exists
a net $(z_{\eps})\in\mathcal{N}_{E}$ such that 
\[
u_{\eps_{k}}+\lambda_{\eps_{k}}(v_{\eps_{k}}-u_{\eps_{k}})+z_{\eps_{k}}\in A_{\eps_{k}}
\]
for $k\in\N$ sufficiently large, but this contradicts \eqref{eq:2.33},
and hence \eqref{eq:2.30} holds. Set 
\[
\tilde{A}_{\eps}:=A_{\eps}+B_{\rho_{\eps}^{q}}(0),\,\,\forall\eps\in(\eta_{q+1},\eta_{q}]
\]
where $(\eta_{q})_{q}$ is given by \eqref{eq:2.30} and it is assumed
to be non-increasing and converging to $0$. Clearly, the net $(\tilde{A}_{\eps})$
is sharply bounded. Since the net $n_{\eps}:=\rho_{\eps}^{q}$ when
$\eps\in(\eta_{q+1},\eta_{q}]$ is negligible, we have $(d_{H}(\tilde{A}_{\eps},A_{\eps}))_{\eps}\in\mathcal{N}_{\R}$,
where $d_{H}(\cdot,\cdot)$ denotes the Hausdorff distance. Thus,
by Thm.~3.8 of \cite{ObVe08}, we have $[\tilde{A}_{\eps}]=[A_{\eps}]$.
Therefore,} \ref{enu:internal=000020conv.=000020rep}\textcolor{black}{{}
is proved.}

\ref{enu:internal=000020conx=000020repr=000020finite=000020dim.}:
See \cite[Lem.~17]{GKV15}.
\end{proof}
In the next theorem, we give an assumption under which a convex strongly
internal set has a representative consisting of convex sets
\begin{thm}
\label{thm:ConRep}Let $A\subseteq\mathcal{G}_{E}$ be a non-empty
strongly internal convex set. Then, A has a representative $(\tilde{A}_{\eps})$
consisting of convex sets if and only if it has a representative $(A_{\eps})$
satisfying the following property 
\begin{equation}
\forall q\in\N\,\forall^{0}\eps\,\forall C\subseteq E_{\eps}:\,C+B_{\rho_{\eps}^{q}}(0)\subseteq A_{\eps}\cap B_{\rho_{\eps}^{-q}}(0)\ \Rightarrow\ \mathrm{conv}(C)\subseteq A_{\eps}\label{eq:4.10}
\end{equation}
where $\mathrm{conv}(C)$ stands for the convex hull of the set $C$.
Moreover, if the representative $(A_{\eps})$ is sharply bounded,
the representative $(\tilde{A}_{\eps})$ is sharply bounded as well.
\end{thm}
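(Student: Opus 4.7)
The proof splits into the easy forward direction and the substantive reverse direction. For $(\Rightarrow)$, the plan is trivial: take $(A_\eps)$ to be the given convex representative $(\tilde A_\eps)$ itself. Then whenever $C+B_{\rho_\eps^q}(0)\subseteq \tilde A_\eps\cap B_{\rho_\eps^{-q}}(0)$, in particular $C\subseteq \tilde A_\eps$, and convexity of $\tilde A_\eps$ immediately yields $\mathrm{conv}(C)\subseteq \tilde A_\eps$, so that \eqref{eq:4.10} holds.

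For $(\Leftarrow)$, starting from $(A_\eps)$ satisfying \eqref{eq:4.10}, my plan is to shrink $A_\eps$ to an ``internal core'' on which the premise of \eqref{eq:4.10} is automatic and then take its convex hull. I define
\[
C_\eps^q := \{\, x\in E_\eps : B_{\rho_\eps^q}(x)\subseteq A_\eps\cap B_{\rho_\eps^{-q}}(0)\,\},
\]
and pick a non-increasing sequence $\eta_q\downarrow 0$ such that \eqref{eq:4.10} with parameter $q$ holds on $(0,\eta_q]$. By construction $C_\eps^q+B_{\rho_\eps^q}(0)\subseteq A_\eps\cap B_{\rho_\eps^{-q}}(0)$, so the hypothesis gives $\mathrm{conv}(C_\eps^q)\subseteq A_\eps$ for $\eps\in(0,\eta_q]$. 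Set $q(\eps):=q$ for $\eps\in(\eta_{q+1},\eta_q]$ (so $q(\eps)\to\infty$) and $\tilde A_\eps:=\mathrm{conv}(C_\eps^{q(\eps)})$ for such $\eps$, with $\tilde A_\eps:=A_\eps$ otherwise. Each $\tilde A_\eps$ is convex and contained in $A_\eps$, so sharp boundedness is automatically inherited.

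It remains to check $\sint{\tilde A_\eps}=A$. The inclusion $\subseteq$ is immediate from $\tilde A_\eps\subseteq A_\eps$. The hard part is $\supseteq$: for $u\in A$ and any representative $(u_\eps)$ of $u$, I must establish $u_\eps\in \tilde A_\eps$ for $\eps$ sufficiently small. The main obstacle is the following ``sharp-openness at $u$'' lemma: \emph{there exist $N_u\in\N$ and $\bar\eps_u>0$ with $B_{\rho_\eps^{N_u}}(u_\eps)\subseteq A_\eps$ for all $\eps\in(0,\bar\eps_u]$.} I plan to prove this by contradiction via a diagonal extraction: if it fails, then for every $q\in\N$ there exist $\eps$ arbitrarily close to $0$ with $B_{\rho_\eps^q}(u_\eps)\not\subseteq A_\eps$; picking a strictly decreasing sequence $\eps_q\downarrow 0$ and points $y_{\eps_q}\in B_{\rho_{\eps_q}^q}(u_{\eps_q})\setminus A_{\eps_q}$, the net defined by $n_\eps:=y_{\eps_q}-u_{\eps_q}$ for $\eps=\eps_q$ and $n_\eps:=0$ otherwise is negligible (since $||n_{\eps_q}||_{\eps_q}<\rho_{\eps_q}^q$ forces $||n_\eps||_\eps\leq \rho_\eps^M$ for $\eps\leq \eps_M$); hence $(u_\eps+n_\eps)$ represents $u$ and must lie in $A_\eps$ eventually by strong internality of $A$, contradicting $u_{\eps_q}+n_{\eps_q}=y_{\eps_q}\notin A_{\eps_q}$.

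Given the lemma, the conclusion follows by combining with the moderate bound $||u_\eps||_\eps\leq \rho_\eps^{-K_u}$: for every $q\geq\max(N_u,K_u+1)$ the inclusion $B_{\rho_\eps^q}(u_\eps)\subseteq A_\eps$ persists (since $\rho_\eps^q\leq \rho_\eps^{N_u}$, so the ball only shrinks), while $||u_\eps||_\eps+\rho_\eps^q\leq 2\rho_\eps^{-K_u}\leq \rho_\eps^{-q}$ for $\eps$ small ensures $B_{\rho_\eps^q}(u_\eps)\subseteq B_{\rho_\eps^{-q}}(0)$; hence $u_\eps\in C_\eps^q$. Since $q(\eps)\to\infty$, for $\eps$ small enough $q(\eps)\geq\max(N_u,K_u+1)$, so $u_\eps\in C_\eps^{q(\eps)}\subseteq \tilde A_\eps$, which yields $u\in\sint{\tilde A_\eps}$ and completes the argument.
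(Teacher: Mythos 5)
Your proof is correct and follows essentially the same route as the paper's: both erode $A_{\eps}$ to the core of points whose $\rho_{\eps}^{q}$-neighbourhood lies in $A_{\eps}\cap B_{\rho_{\eps}^{-q}}(0)$, apply \eqref{eq:4.10} to take convex hulls, and glue these over a sequence $\eta_{q}\downarrow0$. The only (minor) difference is that the paper invokes Thm.~10~(iv) of \cite{GKV24} for the key fact that every representative of a point of a strongly internal set keeps distance at least $\rho_{\eps}^{p}$ from the complement, whereas you prove this directly by the standard negligible-perturbation/diagonal-extraction argument.
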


\begin{proof}
Clearly, a representative of $A$ consisting of convex sets satisfies
\eqref{eq:4.10}. We prove now that \eqref{eq:4.10} is sufficient
to have a representative consisting of convex sets. Setting 
\[
\forall\eps\in I\,\forall q\in\N:\,C_{\eps q}:=\left(\left(A_{\varepsilon}\cap B_{\rho_{\eps}^{-q}}(0)\right)^{c}+B_{2\rho_{\eps}^{q}}(0)\right)^{c}.
\]
Since $A$ is a non-empty strongly internal set, for all $q$ sufficiently
large, the set $C_{\eps q}$ is non-empty for all $\eps$ small. By
construction we have that $C_{\eps q}+B_{\rho_{\eps}^{q}}(0)\subseteq A_{\varepsilon}\cap B_{\rho_{\eps}^{-q}}(0)$.
Thus, by \eqref{eq:4.10}, 
\[
\forall q\in\N\,\exists\eta_{q}\in I\,\forall\eps\in(0,\eta_{q}]:\,A_{\eps q}:=\mathrm{conv}\left(C_{\eps q}\right)\subseteq A_{\eps}.
\]
Without loss of generality, we assume that the sequence $(\eta_{q})$
in non-increasing and converging to $0$, and set $\tilde{A}_{\eps}:=A_{\eps q}$
whenever $\eps\in(\eta_{q+1},\eta_{q}]$. Setting also $n_{\eps}:=\rho_{\eps}^{q}$
whenever $\eps\in(\eta_{q+1},\eta_{q}]$. We clearly have $\langle\tilde{A}_{\eps}\rangle\subseteq A$.
Let now $x=[x_{\eps}]\in A$. By property \emph{(iv)} of Thm.~10
in \cite{GKV24}, there exists $p\in\N$ 
\[
\forall^{0}\eps:\,d\left(x_{\eps},\left(A_{\varepsilon}\cap B_{n_{\eps}^{-1}}(0)\right)^{c}\right)\geq\rho_{\eps}^{p}>2n_{\eps}
\]
which implies that $x_{\eps}\in\tilde{A}_{\eps}$ for all $\eps$
small. Since the latter holds for any representative $[x_{\eps}]$
of $x$, the inclusion $A\subseteq\langle\tilde{A}_{\eps}\rangle$
holds. Therefore, the proof of the first part is completed. The second
part follows easily from the construction of the representative $(\tilde{A}_{\eps})$
since we have $\tilde{A}_{\eps}\subseteq A_{\eps}$ for all $\eps$
small.
\end{proof}
We show in the next proposition that the property \eqref{eq:4.10}
holds when $\mathrm{dim}(E_{\eps})\leq n$ for all $\eps$ small.
\begin{prop}
Let $A=\langle A_{\eps}\rangle\subseteq\mathcal{G}_{E}$ be a non-empty
convex subset, and assume that there exists $n\in\N$ such that $\mathrm{dim}(E_{\eps})\leq n$
for all $\eps$ small. Then, property \eqref{eq:4.10} holds.
\end{prop}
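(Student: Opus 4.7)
The plan is to argue by contradiction. Suppose \eqref{eq:4.10} fails. Then we obtain $q\in\N$, a decreasing sequence $\eps_k\downarrow 0$, sets $C_{\eps_k}\subseteq E_{\eps_k}$ such that $C_{\eps_k}+B_{\rho_{\eps_k}^{q}}(0)\subseteq A_{\eps_k}\cap B_{\rho_{\eps_k}^{-q}}(0)$, and points $x_{\eps_k}\in\mathrm{conv}(C_{\eps_k})\setminus A_{\eps_k}$. Since $\mathrm{dim}(E_{\eps_k})\le n$, Caratheodory's theorem supplies
\[
x_{\eps_k}=\sum_{i=0}^{n}\lambda_{i,k}\,c_{i,\eps_k},\qquad c_{i,\eps_k}\in C_{\eps_k},\ \lambda_{i,k}\in[0,1],\ \sum_{i=0}^{n}\lambda_{i,k}=1.
\]

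Next I would lift the decomposition to $\mathcal{G}_{E}$. Fix $u=[u_{\eps}]\in A$ (non-empty by hypothesis). Set $c_{i,\eps}:=c_{i,\eps_k}$ and $\lambda_{i,\eps}:=\lambda_{i,k}$ when $\eps=\eps_k$, and $c_{i,\eps}:=u_{\eps}$, $\lambda_{i,\eps}:=\tfrac{1}{n+1}$ otherwise. Since $\|c_{i,\eps_k}\|_{\eps_k}\le\rho_{\eps_k}^{-q}$, the net $(c_{i,\eps})$ is moderate, and the inclusion $C_{\eps_k}+B_{\rho_{\eps_k}^{q}}(0)\subseteq A_{\eps_k}$ together with $u\in A$ shows that every representative of $c_i:=[c_{i,\eps}]$ lies in $A_{\eps}$ for small $\eps$; hence $c_i\in A$. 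Clearly $\lambda_i:=[\lambda_{i,\eps}]\in[0,1]\subseteq\rti$ with $\sum_i\lambda_i=1$.

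The crux is to prove $\sum_{i=0}^{n}\lambda_i\,c_i\in A$: by strong internality of $A$, this forces the canonical representative $\bigl(\sum_i\lambda_{i,\eps}c_{i,\eps}\bigr)$ to lie in $A_{\eps}$ for $\eps$ small, whence $x_{\eps_k}\in A_{\eps_k}$ for large $k$, contradicting the construction. I would establish the following general claim by induction on $n$: any non-empty $\rti$-convex set is closed under finite convex combinations $\sum_{i=0}^{n}\lambda_i c_i$ with $\lambda_i\in[0,1]\subseteq\rti$, $\sum_i\lambda_i=1$. The case $n=1$ is Def.~\ref{def:basic-def}\ref{enu:convexity}. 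For the induction step, the classical splitting $\sum\lambda_i c_i=(1-\lambda_n)\sum_{i<n}\frac{\lambda_i}{1-\lambda_n}c_i+\lambda_n c_n$ requires $1-\lambda_n$ to be invertible in $\rti$, which is the main obstacle here, since in $\rti$ a coefficient may fail to be bounded away from $1$.

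To bypass this I would partition $I$ into $S:=\{\eps:\lambda_{n,\eps}\le 1/2\}$ and $T:=\{\eps:\lambda_{n,\eps}>1/2\}$. On $S$, $(1-\lambda_n)|_S\ge 1/2$ is invertible in $\rti|_S$, so the classical split together with the induction hypothesis and convexity of $A|_S$ (which inherits from $A$ by extending any element using $u\in A$, applying convexity of $A$ on $\mathcal{G}_{E}$, and restricting back) places the combination in $A|_S$. On $T$, the condition $\lambda_{n,\eps}>1/2$ forces $\lambda_{i,\eps}<1/2$ for every $i<n$; swapping the role of $\lambda_n$ with some $\lambda_j$, $j<n$, the same argument gives the combination in $A|_T$. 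Since $S\cup T$ covers a neighborhood of $0$ in $I$, a direct gluing check on representatives of the strongly internal set $A$ yields $\sum_i\lambda_i\,c_i\in A$, closing the induction and the proof.
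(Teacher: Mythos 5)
Your proposal is correct and follows essentially the same route as the paper: argue by contradiction, apply Carath\'eodory's theorem $\eps$-wise, lift the $n+1$ points and coefficients to generalized elements of $A$ and of $[0,1]\subseteq\rti$, invoke convexity to place the combination in $A$, and then use strong internality to force $x_{\eps_k}\in A_{\eps_k}$ for large $k$. The only substantive difference is that you extend the nets to all of $I$ via a fixed $u\in A$ instead of working in $\mathcal{G}_E|_L$, and you explicitly justify (by induction, splitting $I$ according to whether $\lambda_{n,\eps}\le 1/2$ so that $1-\lambda_n$ is invertible on each piece) that binary $\rti$-convexity implies closure under $(n+1)$-fold generalized convex combinations --- a step the paper's proof uses without comment.
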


\begin{proof}
If the property does not hold, there exist $q\in\N$, a sequence $(\eps_{k})$
non-increasing and converging to $0$, a sequence $(C_{\eps_{k}})$
of subsets $C_{\eps_{k}}\subseteq E_{\eps_{k}}$, a sequence $(x_{\eps_{k}})$
of elements $x_{\eps_{k}}\in\mathrm{conv}(C_{\eps_{k}})$ such that
\begin{equation}
\forall k\in\N:\,C_{\eps_{k}}+B_{\rho_{\eps_{k}}^{q}}(0)\subseteq A_{\eps_{k}}\cap B_{\rho_{\eps_{k}}^{-q}}(0)\,\,\,\text{and }\,\,\,x_{\eps_{k}}\not\in A_{\eps_{k}}.\label{eq:4.13}
\end{equation}
By Carathéodory's theorem, for all $k\in\N$, there exist $\lambda_{0\eps_{k}},...,\lambda_{n\eps}\in[0,1]\subseteq\R$,
$y_{0\eps_{k}},...,y_{n\eps_{k}}\in C_{\eps_{k}}$ such that 
\begin{equation}
\sum_{j=0}^{n}\lambda_{j\eps_{k}}=1\,\,\,\text{and}\,\,\,x_{\eps_{k}}:=\sum_{j=0}^{n}\lambda_{j\eps_{k}}y_{j\eps_{k}}.\label{eq:4.14}
\end{equation}
From \eqref{eq:4.13}, $y_{j}:=[y_{j\eps_{k}}]|_{L}\in\langle A_{\eps}\rangle|_{L}$
for all $j=0,...,n$, where $L:=\{\eps_{k}\mid k\in\N\}\subseteq_{0}I$.
From \eqref{eq:4.14} and the convexity of $A$, we have that 
\[
x:=[x_{\eps_{k}}]|_{L}=\sum_{j=0}^{n}[\lambda_{j\eps_{k}}]|_{L}y_{j}\in A|_{L}.
\]
Since $A$ is a strongly internal set, we have that $x_{\eps_{k}}\in A_{\eps_{k}}$
for all $k$ sufficiently large, but this contradicts \eqref{eq:4.13}.
Therefore, property \eqref{eq:4.10} holds.
\end{proof}
We now equivalently reformulate $\eps$-wise the property of being
strongly disjoint:
\begin{prop}
\textcolor{black}{\label{prop:strong=000020disjoint=000020}Let $A=[A_{\varepsilon}]$
and $B=\langle B_{\varepsilon}\rangle$ be two non-empty subsets of
$\mathcal{G}_{E}$. Then $A$ and }\textbf{\textcolor{black}{$B$}}\textcolor{black}{{}
are strongly disjoint if and only if for any representative $(A_{\varepsilon})$
of $A$ there exists a representative $(B_{\eps})$ of $B$ such that
\begin{equation}
\forall^{0}\varepsilon:\,A_{\varepsilon}\cap B_{\varepsilon}=\emptyset.\label{eq:inters}
\end{equation}
}
\end{prop}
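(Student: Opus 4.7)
The plan is to reformulate strong disjointness as an $\eps$-wise disjointness by exploiting the asymmetry between internal and strongly internal sets: membership in $A=[A_\eps]$ requires only \emph{some} representative eventually landing in the defining net, while membership in $B=\langle B_\eps\rangle$ requires \emph{every} representative to do so. This asymmetry is what will force the equivalence.

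For the easier implication $(\Leftarrow)$, I would fix any representative $(A_\eps)$ of $A$ and use the hypothesis to obtain a representative $(B_\eps)$ of $B$ with $A_\eps\cap B_\eps=\emptyset$ for all $\eps$ small. Assuming toward contradiction that $A$ and $B$ are not strongly disjoint, I pick $L\subseteq_{0}I$ and $x\in A|_L\cap B|_L$. Since $A|_L$ is internal, some representative $(x_\eps)_{\eps\in L}$ of $x|_L$ satisfies $x_\eps\in A_\eps$ for $\eps\in L$ small; since $B|_L$ is strongly internal, this same $(x_\eps)_{\eps\in L}$ also satisfies $x_\eps\in B_\eps$ for $\eps\in L$ small, giving $x_\eps\in A_\eps\cap B_\eps$, contradicting the choice of $(B_\eps)$.

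For $(\Rightarrow)$, the main move will be to modify the given representative of $B$ so that it becomes disjoint from $A_\eps$ on the nose. Explicitly, given an arbitrary representative $(A_\eps)$ of $A$ and the representative $(B_\eps)$ of $B=\langle B_\eps\rangle$, I set
\[
\tilde B_\eps:=B_\eps\setminus A_\eps, \qquad \eps\in I,
\]
which trivially satisfies $A_\eps\cap\tilde B_\eps=\emptyset$ for every $\eps$. The content of the proof is then to verify that $(\tilde B_\eps)$ is still a representative of $B$, i.e.\ $\langle\tilde B_\eps\rangle=B$. The inclusion $\langle\tilde B_\eps\rangle\subseteq B$ follows from $\tilde B_\eps\subseteq B_\eps$. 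For the converse, given $x\in B$ and any representative $(x_\eps)$ of $x$, I would show $x_\eps\notin A_\eps$ for $\eps$ small as follows: otherwise, the set $L:=\{\eps\in I\mid x_\eps\in A_\eps\}$ is co-final in $I$, and then $x|_L$ belongs simultaneously to $A|_L$ (through the representative $(x_\eps)_{\eps\in L}$) and to $B|_L$ (as the restriction of $x\in B$ to the strongly internal set $B|_L$), contradicting the strong disjointness of $A$ and $B$.

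The only delicate step is the verification that $(\tilde B_\eps)$ still generates $B$ as a strongly internal set; it makes essential use of both the strong disjointness and the fact that $B$ is strongly internal, since under the weaker hypothesis that $B$ be merely internal a representative could ``sneak'' into $A_\eps$ on a co-final set without producing a common element. Beyond this point the proof is routine bookkeeping with representatives and restrictions to co-final subsets.
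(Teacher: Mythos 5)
Your proof is correct, and while the backward implication coincides with the paper's argument (the same exploitation of the internal/strongly-internal asymmetry, merely phrased by contradiction), your forward implication takes a genuinely different and more elementary route. The paper builds the new representative of $B$ by a metric erosion, $B'_{\eps}:=\left(\left(B_{\eps}\cap B_{\rho_{\eps}^{-q}}(0)\right)^{c}+B_{\rho_{\eps}^{q}}(0)\right)^{c}$ on $\eps\in(\eta_{q+1},\eta_q]$, proves $B'_{\eps}\cap A_{\eps}=\emptyset$ by a diagonal contradiction argument, and then needs the distance characterization of strongly internal sets (Thm.~10 of \cite{GKV24}) twice: once to derive the contradiction and once to check that $(B'_{\eps})$ still generates $B$. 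You instead take $\tilde B_{\eps}:=B_{\eps}\setminus A_{\eps}$, which is disjoint from $A_{\eps}$ by construction, and reduce everything to the single verification $\langle\tilde B_{\eps}\rangle=B$; your argument for that — if some representative $(x_{\eps})$ of $x\in B$ hits $A_{\eps}$ on a co-final set $L$, then $x|_L\in A|_L\cap B|_L$ because that very representative witnesses membership in the internal set $A|_L$ while membership in the strongly internal set $B|_L$ is automatic — is sound and uses nothing beyond the definitions. What you gain is brevity and independence from the external distance characterization; what the paper's erosion buys is a representative that sits uniformly inside $B_{\eps}$, depends on $A$ only through the rate sequence $(\eta_q)$, and preserves convexity of the $B_{\eps}$ (the erosion of a convex set is convex, whereas $B_{\eps}\setminus A_{\eps}$ is generally not), which fits the convexity-preserving constructions used elsewhere in that section — but none of this is required by the statement as given, so your proof is complete as it stands.
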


\begin{proof}
\textcolor{black}{We prove first that the condition is sufficient.
Let $L\subseteq_{0}I$ and let $x\in_{L}A$, we have to prove that
$x\notin_{L}B$. Take any representative }$(A_{\eps})$ of $A$, there
exists a representative $(x_{\varepsilon})$ of $x$ such that $\forall\varepsilon\in L:\,x_{\varepsilon}\in A_{\varepsilon}$.
By assumption, there exists a representative $(B_{\eps})$ of $B$
such that \textcolor{black}{\eqref{eq:inters}. In particular, $\forall^{0}\varepsilon\in L:\,x_{\varepsilon}\not\in B_{\varepsilon}$,
i.e.~$x\notin_{L}B$.}

Assume now that $A$ and $B$ are strongly disjoint. We claim that
for any representative $(A_{\varepsilon})$ of $A$, for any representative
$(B_{\eps})$ of $B$, for all $q\in\N$ 
\begin{equation}
\exists\eta_{q}\in I\,\forall\varepsilon\in(0,\eta_{q}]:\,\left(\left(B_{\varepsilon}\cap B_{\rho_{\eps}^{-q}}(0)\right)^{c}+B_{\rho_{\eps}^{q}}(0)\right)^{c}\cap A_{\varepsilon}=\emptyset\label{eq:weak=000020intersection}
\end{equation}
Indeed, assume that \eqref{eq:weak=000020intersection} does not hold.
Then, there exist a representative $(A_{\varepsilon})$ of $A$, a
representative $(B_{\varepsilon})$ of $B$, $q\in\mathbb{N},$ a
non-increasing sequence $(\varepsilon_{k})_{k}$ that tends to $0$
and a sequence $(x_{\varepsilon_{k}})_{k}$ of elements $x_{\eps_{k}}\in E_{\eps_{k}}$such
that 
\begin{equation}
\forall k\in\mathbb{N}:\,x_{\varepsilon_{k}}\in\left(\left(B_{\varepsilon_{k}}\cap B_{\rho_{\eps_{k}}^{-q}}(0)\right)^{c}+B_{\rho_{\varepsilon_{k}}^{q}}(0)\right)^{c}\cap A_{\varepsilon_{k}}.\label{eq:3.58}
\end{equation}
Take $u=[u_{\varepsilon}]\in\mathcal{G}_{E}$ where $u_{\varepsilon}=x_{\varepsilon_{k}}$
when $\varepsilon=\varepsilon_{k}$. Note that this is possible since
$((B_{\varepsilon_{k}}\cap B_{\rho_{\eps_{k}}^{-q}}(0))^{c}+B_{\rho_{\varepsilon_{k}}^{q}}(0))^{c}\subseteq B_{\rho_{\eps_{k}}^{-q}}(0)$
and hence $\left|x_{\eps_{k}}\right|\leq\rho_{\eps_{k}}^{-q}$ for
all $k\in\N$. It follows from \eqref{eq:3.58} that $u\in_{L}A$
where $L=\left\{ \eps_{k}\mid k\in\N\right\} \subseteq_{0}I$. On
the other hand, we have 
\[
\forall k\in\mathbb{N}:\,d(x_{\eps_{k}},B_{\varepsilon_{k}}^{c})\geq d(x_{\eps_{k}},(B_{\varepsilon_{k}}\cap B_{\rho_{\eps_{k}}^{-q}}(0))^{c})>\rho_{\eps_{k}}^{q+1}.
\]
By Thm.~10 of \cite{GKV24}, $u\in_{L}B$, which contradicts the
fact that $A$ and $B$ are strongly disjoint. Therefore, our claim
is proved. Without loss of generality we can assume that the sequence
$(\eta_{q})$ is non-increasing and converging to $0$. Set
\[
\forall\varepsilon\in(\eta_{q+1},\eta_{q}]:\,B'_{\varepsilon}:=\left(\left(B_{\varepsilon}\cap B_{\rho_{\eps}^{-q}}(0)\right)^{c}+B_{\rho_{\eps}^{q}}(0)\right)^{c}.
\]
From \eqref{eq:weak=000020intersection}, we have 
\[
\forall\eps\in(0,\eta_{0}]:\,B_{\eps}'\cap A_{\eps}=\emptyset.
\]
It remains to prove that $(B'_{\eps})$ is a representative of $B$.
Clearly, since $\forall\eps:B'_{\eps}\subseteq B_{\eps}$, we have
$\langle B'_{\eps}\rangle\subseteq\langle B{}_{\eps}\rangle$. Let
now $x=[x_{\eps}]\in\langle B{}_{\eps}\rangle$. By Thm.~10. of \cite{GKV24},
there exists $p\in\N$ such that $\forall^{0}\eps:\,d(x_{\eps},B_{\eps}^{c})>\rho_{\eps}^{p}$,
and since the net $n_{\varepsilon}:=\rho_{\varepsilon}^{q}$ when
$\varepsilon\in(\eta_{q+1},\eta_{q}]$, is negligible we get $d(x_{\eps},(B_{\varepsilon}\cap B_{n_{\eps}^{-1}}(0))^{c})>\rho_{\eps}^{q}>n_{\eps}$
for all $\eps$ small. Hence $x_{\eps}\in B'_{\eps}$ for all $\eps$
small. Since $[x_{\eps}]$ is an arbitrary representative of $x$,
we have that $x\in\langle B'_{\eps}\rangle$. Therefore, the net $(B'_{\eps})$
is a representative of $B$ which completes the proof.
\end{proof}
\textcolor{black}{We r}ecall from \cite{GiKu18} that the strong exterior
of a set $A\subseteq\mathcal{G}$ is defined by 
\begin{equation}
\text{ext}(A):=\{x\in\mathcal{G}\mid\forall a\in A:\,|a-x|>0\}\label{eq:exterior}
\end{equation}
or equivalently\footnote{The result is stated for the special case $\mathcal{G}=\rti$, but
the statement holds for any $\rti$-module $\mathcal{G}$ } (see \cite[Lem.~3.4]{GiKu18})
\[
\text{ext}(A)=\{x\in\mathcal{G}\mid\forall S\subseteq I:e_{S}\neq0\ \Rightarrow\ xe_{S}\not\in A\}.
\]

\begin{prop}
\textcolor{black}{\label{hyperplane}Let $(\mathcal{G},||\cdot||)$
be an $\rti$-normed $\rti$-module, $f:\mathcal{G}\longrightarrow\rti$
be a non-identically null $\rti$-linear map, $\alpha\in\rti$ and
let $\tilde{H}\subseteq\mathcal{G}$ be a hyperplane defined by 
\[
\tilde{H}=\{x\in\mathcal{G}\mid f(x)=\alpha\}=:\{f=\alpha\}.
\]
Then, the following properties hold:}
\begin{enumerate}
\item \textcolor{magenta}{\label{enu:hyperplane1}}\textcolor{black}{If
$f$ is continuous (in the sharp topology), then $\tilde{H}$ is closed
in the sharp topology.}
\item \textcolor{magenta}{\label{enu:hyperplane2}}\textcolor{black}{If
$\mathrm{ext}(\tilde{H})$ is a sharply open set}\footnote{or equivalently $\tilde{H}$ is sharply closed e.g.~when $\tilde{H}$
is an internal set, see \cite[Lem.~3.5]{GiKu18}}\textcolor{black}{, then $f$ is continuous.}
\item \textcolor{magenta}{\label{enu:hyperplane3}}\textcolor{black}{If
$\mathcal{G}=\mathcal{G}_{E}$ and $f=[f_{\varepsilon}]$. Then 
\begin{equation}
\tilde{H}=[\{u\in E_{\varepsilon}\mid f_{\varepsilon}(u)=\alpha_{\varepsilon}\}].\label{eq:internal_hyperplane}
\end{equation}
where the right hand side depends neither on the representative $(f_{\eps})$
nor on the representative $(\alpha_{\eps})$ of $\alpha$. In particularly,
$\tilde{H}$ is closed.}
\end{enumerate}
\end{prop}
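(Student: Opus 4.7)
For part (\ref{enu:hyperplane1}), the argument is immediate: $\rti$ with the sharp topology is Hausdorff, so $\{\alpha\}$ is closed, and continuity of $f$ makes $\tilde H = f^{-1}(\{\alpha\})$ the preimage of a closed set, hence closed.

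For part (\ref{enu:hyperplane3}), I would first establish the set equality $\tilde H = [\{u\in E_{\varepsilon}\mid f_{\varepsilon}(u)=\alpha_{\varepsilon}\}]$; closedness will then follow from the paper's earlier remark that internal sets are sharply closed. The inclusion $\supseteq$ is immediate: a representative $(v_{\varepsilon})$ with $f_{\varepsilon}(v_{\varepsilon})=\alpha_{\varepsilon}$ for small $\varepsilon$ yields $f(v)=[f_{\varepsilon}(v_{\varepsilon})]=\alpha$. The non-trivial inclusion is where the non-identical nullness of \ref{enu:non-identically=000020map=000020} pays off: pick $a=[a_{\varepsilon}]\in\mathcal G_{E}$ with $f(a)$ invertible, so $|f_{\varepsilon}(a_{\varepsilon})|\geq\rho_{\varepsilon}^{N}$ for some $N$ and all small $\varepsilon$. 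Given $v\in\tilde H$ and any representative $(v_{\varepsilon})$, the net $\mu_{\varepsilon}:=(f_{\varepsilon}(v_{\varepsilon})-\alpha_{\varepsilon})/f_{\varepsilon}(a_{\varepsilon})$ is negligible (negligible numerator over a denominator bounded below by a power of $\rho_{\varepsilon}$), so $v'_{\varepsilon}:=v_{\varepsilon}-\mu_{\varepsilon}a_{\varepsilon}$ is another representative of $v$ satisfying $f_{\varepsilon}(v'_{\varepsilon})=\alpha_{\varepsilon}$ exactly. Independence of the right-hand side from the choices of representatives of $f$ and $\alpha$ follows by reapplying the same correction argument with alternative representatives.

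For part (\ref{enu:hyperplane2}), the plan is to mimic the classical result that a closed kernel forces continuity of a linear form on a normed space. The footnote reformulates the hypothesis as $\tilde H$ being sharply closed. Non-identical nullness yields $x^{*}$ with $f(x^{*})$ invertible; scaling gives $a:=x^{*}/f(x^{*})$ with $f(a)=1$, and since $\alpha a\in\tilde H$, translation reduces the problem to showing that $\ker f$ sharply closed implies $f$ continuous. The classical chain then reads: find an invertible $d\in\rti_{>0}$ separating $a$ from $\ker f$, and use the decomposition $x=f(x)\,a+\bigl(x-f(x)\,a\bigr)$, whose second summand lies in $\ker f$, to deduce $\|x\|\geq d\,|f(x)|$ and hence continuity of $f$ at the origin.

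The main obstacle is precisely this last step. In the $\rti$-setting, infima of the distances $\|a-y\|$ as $y$ ranges over $\ker f$ may fail to exist (Sec.~\ref{subsec:Sup}), so one cannot define $d$ naively as $\inf_{y\in\ker f}\|a-y\|$. The plan is to bypass the infimum by translating sharp closedness of $\ker f$ (equivalently, sharp openness of $\mathrm{ext}(\ker f)$, after checking $a\in\mathrm{ext}(\ker f)$) directly into the existence of an invertible $d\in\rti_{>0}$ with $B_{d}(a)\cap\ker f=\emptyset$. Once this quantitative separation is in hand, $\rti$-linearity of $f$ and the ordinary triangle inequality of the $\rti$-norm carry the classical computation through verbatim.
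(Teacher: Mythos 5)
Your treatments of \ref{enu:hyperplane1} and \ref{enu:hyperplane3} are fine. For \ref{enu:hyperplane1} the preimage argument works because the sharp topology on $\rti$ is induced by the ultrametric $|x-y|_{\mathrm{e}}$, hence singletons are closed (the paper argues with sequences, to the same effect). For \ref{enu:hyperplane3} your correction $v'_{\eps}=v_{\eps}-\mu_{\eps}a_{\eps}$ is essentially the paper's device (it first normalizes a vector so that $f_{\eps}(v'_{\eps})=1$ and then adds $(\alpha_{\eps}-\alpha'_{\eps})v'_{\eps}$), and the independence of the right-hand side from the representatives follows as you say.

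The gap is in \ref{enu:hyperplane2}, at the claim that once an invertible $d$ with $B_{d}(a)\cap\ker f=\emptyset$ is found, ``the classical computation carries through verbatim''. It does not, for two reasons specific to $\rti$. First, the classical computation needs $\|a-y\|\geq d$ for \emph{every} $y\in\ker f$, and this does not follow from $B_{d}(a)\cap\ker f=\emptyset$: since the order on $\rti$ is not total, the negation of $\|a-y\|<d$ only yields $\|a-y\|\geq_{S}d$ on some co-final $S\subseteq_{0}I$ (Lem.~5 of \cite{MTAG21}), and the natural repair $e_{S}y\in\ker f$ need not lie in $B_{d}(a)$ because on $S^{c}$ its distance to $a$ is $\|a\|$. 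Second, the computation divides by $f(x)$, which is in general not invertible; one must pass to a co-final $S$ on which it is, form $e_{S}\bigl(a-x/f(x)\bigr)\in\ker f$, and then one needs the distance estimate \emph{restricted to $S$}, which again does not follow from mere disjointness of the ball from the kernel. Both defects can be repaired, but only by keeping the stronger information your construction provides and then discards, namely $B_{d}(a)\subseteq\mathrm{ext}(\ker f)$ (invertible distance from every point of the ball to every point of the kernel), and by adding a convexity/intermediate-value argument on segments inside the ball that upgrades the pointwise invertibility $|f(u_{\lambda})|>0$ to the uniform sign $f>0$ on all of $B_{d}(a)$. That upgrade is precisely the paper's claim \eqref{eq:3.23} and is the real content of its proof of \ref{enu:hyperplane2}; once it is in place, your homothety step does give $|f(w)|\leq d^{-1}\|w\|$ and hence continuity.
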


\begin{proof}
\textcolor{black}{\ref{enu:hyperplane1}: Assume that $f$ is continuous
(in the sharp topology) and let $(u_{n})_{n}$ be a sequence of elements
of $\tilde{H}$ that converges to $u$ in the sharp topology, that
is 
\[
\forall q\in\mathbb{R}_{>0}\,\exists N\in\mathbb{N}\,\forall n>N:\,||u_{n}-u||<\diff\rho^{q}.
\]
Since $f$ is continuous we have 
\[
\forall n>N:\,|f(u_{n})-f(u)|\leq C||u_{n}-u||<C\diff\rho^{q}
\]
where $C\in\rti_{\geq0}$ (see }Sec.~\ref{subsec:LinMaps}\textcolor{black}{).
Since $(u_{n})_{n}\in\tilde{H}$, $|\alpha-f(u)|<C\diff\rho^{q}$
for all $q\in\N$. Letting $q\to+\infty$, we obtain $f(u)=\alpha.$
Thus, $u\in\tilde{H}$ and hence $\tilde{H}$ is closed.}

\textcolor{black}{\ref{enu:hyperplane2}: Since $f$ is non-identically
null, $\text{ext}(\tilde{H})$ is non-empty. Let $v_{0}\in\mathrm{ext}(\tilde{H})$.
Then $\left|f(v_{0})-\alpha\right|>0$ by }\eqref{eq:exterior}\textcolor{black}{.
It follows that there exists $L\subseteq I$ such that $e_{L}f(v_{0})<e_{L}\alpha$
and $e_{L^{c}}f(v_{0})>e_{L^{c}}\alpha$. Since $f$ is non-identically
null, there exists $w\in\mathcal{G}$ such that $f(w)=1$. Set $u_{0}:=e_{L}v_{0}+e_{L^{c}}(-v_{0}+2\alpha w)$.
It follows that $f(u_{0})<\alpha$, and hence $u_{0}\in\text{ext}(\tilde{H})$.
By assumption, $\text{ext}(\tilde{H})$ is a sharply open set. Thus,
there exists $r\in\rti_{>0}$ such that $B_{r}(u_{0})\subseteq\text{ext}(\tilde{H})$.
We claim that 
\begin{equation}
\forall v\in B_{r}(u_{0}):\,f(v)<\alpha.\label{eq:3.23}
\end{equation}
Indeed, assume that there exists $u_{1}\in B_{r}(u_{0})$ and $\exists L_{1}\subseteq_{0}I$
such that $\alpha<_{L_{1}}f(u_{1})$. Since $B_{r}(u)$ is convex,
we have 
\begin{equation}
\{u_{\lambda}=(1-\lambda)u_{0}+\lambda u_{1}\mid\lambda\in[0,1]\}\subseteq B_{r}(u)\subset\text{ext}(\tilde{H})\label{eq:3.30}
\end{equation}
which implies that $|f(u_{\lambda})-\alpha|>0$ for all $\lambda\in[0,1]$.
On the other hand, for $\lambda=_{L_{1}}\frac{f(u_{1})-\alpha}{f(u_{1})-f(u_{0})}$
we have $f(u_{\lambda})=_{L_{1}}\alpha$ which contradicts \eqref{eq:3.30}.
Consequently, \eqref{eq:3.23} holds. For $v\in B_{r}(u_{0})$, we
have $v=u_{0}+rz$ where $z\in B_{1}(0)$. It follows that 
\[
\forall z\in B_{1}(0):\,f(u_{0}+rz)<\alpha
\]
which yields
\[
\forall z\in B_{1}(0):\,f(z)<\frac{1}{r}(\alpha-f(u_{0})).
\]
By a homothety argument we have
\[
\forall w\in\mathcal{G}:\,f(w)\leq\frac{1}{r}(\alpha-f(u_{0}))||w||.
\]
Thus, $f$ is continuous (see }Sec.~\ref{subsec:LinMaps}\textcolor{black}{).}

\textcolor{black}{\ref{enu:hyperplane3}: Let $(f_{\eps})$ and $(\alpha_{\eps})$
be any representatives of $f$ and of $\alpha$ respectively. In order
to show that 
\[
\tilde{H}\subseteq[\{u\in E_{\varepsilon}\mid f_{\varepsilon}(u)=\alpha_{\varepsilon}\}],
\]
we have to prove that for every $x\in\tilde{H}$, there exists a representative
$(x'_{\eps})$ of $x$ that satisfies 
\begin{equation}
\forall^{0}\varepsilon:\,f_{\varepsilon}(x'_{\varepsilon})=\alpha{}_{\varepsilon}.\label{eq:3.28}
\end{equation}
Let $x\in\tilde{H}$, so that $f(x)=\alpha$ and, for every representative
$(x_{\eps})$ of $x$, there exists a representative $(\alpha'_{\eps})$
of $\alpha$ such that $\forall^{0}\eps:\,f_{\eps}(x_{\eps})=\alpha_{\eps}'$.
Since $f$ is non-identically null, there exists $v\in\mathcal{G}_{E}$
such that $|f(v)|>0$. Without loss of generality, we can assume that
$f(v)=1$. It follows that for every representative $(v_{\varepsilon})$
of $v$ there exists a negligible net $(z_{\varepsilon})$ such that
$\forall^{0}\varepsilon:\,f_{\varepsilon}(v_{\varepsilon})=1+z_{\varepsilon}$.
Setting $\forall^{0}\eps:\,v_{\eps}':=v_{\varepsilon}/(1+z_{\varepsilon})$
we obtain $\forall^{0}\varepsilon:\,f_{\varepsilon}(v'_{\varepsilon})=1.$
Setting $\forall^{0}\eps:\,x'_{\varepsilon}=x_{\varepsilon}+(\alpha_{\varepsilon}-\alpha'_{\varepsilon})v'_{\varepsilon}$
we obtain \eqref{eq:3.28}. This implies $\tilde{H}\subseteq[\{u\in E_{\varepsilon}\mid f_{\varepsilon}(u)=\alpha_{\varepsilon}\}].$
The opposite inclusion is trivial. Finally, since $\tilde{H}$ is
an internal set and $E_{\varepsilon}$ is a normed space for every
$\varepsilon$, Thm.~3.2 of \cite{ObVe08} implies that $\tilde{H}$
is closed in the sharp topology.}
\end{proof}
\begin{defn}
Let $\mathcal{G}$ be an $\rti$-normed $\rti$-module and let $A$,
$B$ be two subsets of $\mathcal{G}$, $f:\mathcal{G}\ra\rti$ an
$\rti$-linear map non identically null, and let $\alpha\in\rti$.
We say that the hyperplane $\tilde{H}=\{f=\alpha\}$ \emph{separates}
$A$ and $B$ if 
\[
\forall x\in A:\,f(x)\le\alpha\qquad\text{ and }\qquad\forall x\in B:\,f(x)\geq\alpha.
\]
We say that $\tilde{H}$ \emph{strictly separates} $A$ and $B$ if
there exists $r\in\rti_{>0}$ such that 
\[
\forall x\in A:\,f(x)\le\alpha-r\qquad\text{ and }\qquad\forall x\in B:\,f(x)\geq\alpha+r.
\]
\end{defn}

The first geometric form of the Hahn-Banach theorem for normed spaces
states that, given two non-empty, convex and disjoint sets such that
one of these two sets is open, then they can be separated by a closed
hyperplane. We will see in the next example that these conditions
are not sufficient to generalize this property to subsets of $\mathcal{G}_{E}$.
As an example, take $D_{\infty}$, the set of all infinitesimals (see
Sec.~\ref{subsec:Sup}), and take $A=\{[x_{\eps}]\in\rti_{\geq0}\mid\exists c\in\mathbb{R}_{>0}\,\forall^{0}\varepsilon:\,x_{\varepsilon}\geq c\}$.
It is easy to check that $D_{\infty}$ and $A$ are convex and strongly
disjoint sets. Moreover, $D_{\infty}$ is an open set. Assume that
there exists a hyperplane $\tilde{H}=\{f=\alpha\}$ that separates
$A$ and $D_{\infty}$. Then 
\begin{equation}
\forall x\in D_{\infty}\,\forall y\in A:\,f(x)\le\alpha\leq f(y).\label{eq:Separation=000020sets}
\end{equation}
Since $f$ is non identically null, there exists $v\in\rti$ such
that $|f(v)|>0$. Since we have $f(v)=vf(1)$, $f(1)$ is invertible.
Using \eqref{eq:Separation=000020sets} with $x=\diff\rho$ and $y=1$
we get that $f(1)\diff\rho\leq f(1)$. Therefore, we necessary have
$f(1)>0$, which implies that 
\begin{equation}
\forall x\in D_{\infty}\,\forall y\in A:\,x\le\frac{\alpha}{f(1)}\leq y\label{eq:3.24}
\end{equation}
Let $c\in\mathbb{R}_{>0}$ and choose $y=[c]\in A$ and $x=0\in D_{\infty}$
in the last inequality we obtain $0\le\frac{\alpha}{f(1)}\leq c$
for all $c\in\mathbb{R}_{>0}$, which implies that $\frac{\alpha}{f(1)}\approx0$,
and hence $\frac{\alpha}{f(1)}\in D_{\infty}$,which is not possible
since we can choose $x=\frac{\alpha}{f(1)}+\diff\rho\in D_{\infty}$
in \eqref{eq:3.24} yielding to the contradiction $\diff\rho=_{}0$.
Therefore, the sets $D_{\infty}$ and $A$ cannot be separated by
a hyperplane.

The proof of the classical geometric form of the Hahn-Banach theorem
in a normed vector space $E$ is based on the use of the Minkowski
functional $p$ defined by 
\[
\forall x\in E:\,p(x)=\text{inf}\{\alpha\in\mathbb{R}_{>0}\mid x_{0}+\alpha^{-1}(x-x_{0})\in A\}
\]
where $A\subseteq E$ is a convex subset and $x_{0}$ belongs to the
interior of $A$. The existence of the infimum is not always guaranteed
in the non-Archimedean ring $\rti$. So, the definition of the Minkowski
functional cannot be generalized to convex subsets of $\mathcal{G}$
without any additional assumptions.

We start with the following theorem.
\begin{thm}
\label{thm:glb=000020implies=000020inf}Let $A\subseteq\rti$ be a
non-empty strongly open convex set. Assume that the following two
properties holds:
\begin{enumerate}
\item \label{enu:ExtPro}For each $x\in\rti\setminus A$ there exists $L\subseteq_{0}I$
such that $x\in_{L}\mathrm{ext}(A)$;
\item The least upper bound of $A$ exists.
\end{enumerate}
Then the sharp supremum of $A$ also exists.

\end{thm}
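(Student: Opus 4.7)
The plan is to show that the least upper bound $\sigma$ of $A$ is itself the sharp supremum. Condition $(1)$ of the sharp supremum definition---that $\sigma$ is an upper bound---is immediate from $\sigma$ being the LUB. The work is to verify condition $(2)$: for every $q\in\N$, produce some $\bar{s}\in A$ with $\sigma-\diff\rho^{q}\leq\bar{s}$.

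Fix $q\in\N$ and write $y:=\sigma-\diff\rho^{q}$. If $y\in A$, take $\bar{s}:=y$; so assume $y\notin A$. By hypothesis~\ref{enu:ExtPro}, there is $L\subseteq_{0}I$ with $y\in_{L}\mathrm{ext}(A)$, i.e.~$|a-y|e_{L}$ is positive and invertible for every $a\in A$.

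The core step is the following key claim: $ae_{L}\leq ye_{L}$ for every $a\in A$. Granting this, set $\tau:=ye_{L}+\sigma e_{L^{c}}$. For every $a\in A$ one has $ae_{L}\leq ye_{L}$ by the claim and $ae_{L^{c}}\leq\sigma e_{L^{c}}$ because $\sigma$ bounds $A$, hence $a\leq\tau$; so $\tau$ is an upper bound of $A$. Moreover $\sigma-\tau=\diff\rho^{q}e_{L}$ is non-negative and nonzero in $\rti$: its representative $\rho_{\eps}^{q}\chi_{L}(\eps)$ is not negligible because $L\subseteq_{0}I$ and $\rho_{\eps}^{q}>0$. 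Thus $\tau\leq\sigma$ with $\tau\neq\sigma$, but the LUB property of $\sigma$ forces $\sigma\leq\tau$, whence $\sigma=\tau$, a contradiction. This completes condition~$(2)$.

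It remains to justify the key claim. Suppose for contradiction some $a^{*}\in A$ satisfies $a^{*}e_{L}\not\leq ye_{L}$. Then on a co-final $L'\subseteq L$, $a^{*}|_{L'}>y|_{L'}$, with invertible gap because $y\in_{L'}\mathrm{ext}(A)$; combined with $a^{*}\leq\sigma$ this yields $0<a^{*}-y\leq\diff\rho^{q}$ on $L'$. Using strong openness, pick $r\in\rti_{>0}$ with $B_{r}(a^{*})\subseteq A$, and consider perturbations $a_{\lambda}:=a^{*}+\lambda(y-a^{*})e_{L'}$ for $\lambda\in[0,1]\subseteq\rti$. Whenever $\lambda(a^{*}-y)e_{L'}<r$ we have $a_{\lambda}\in A$. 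Using convexity of $A$ to iterate and patch such perturbations, one builds an element of $A$ that agrees with $y$ on a co-final portion of $L'$, contradicting $y\in_{L'}\mathrm{ext}(A)$.

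The hard part will be this last construction. The given scale $\diff\rho^{q}$ and the strong-openness radius $r$---which depends on $a^{*}$ and need not exceed $\diff\rho^{q}$---must be reconciled inside the partial order of $\rti$: unlike in a totally ordered field, one cannot simply pick ``a small enough neighborhood.'' The co-final subsets $L,L'$ must be tracked carefully, the invertibility afforded by $\mathrm{ext}(A)$ must be converted into quantitative estimates in the $\diff\rho$ scale, and convexity of $A$ must be leveraged repeatedly to keep perturbations inside $B_{r}(a^{*})$. This is precisely where all three hypotheses---convexity, strong openness, and~\ref{enu:ExtPro}---jointly do the work.
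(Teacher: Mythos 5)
The central step of your argument --- the ``key claim'' that $ae_{L}\leq ye_{L}$ for every $a\in A$ --- is false, and with it the whole strategy of proving $\sigma-\diff\rho^{q}\in A$ for \emph{every} $q\in\N$. Counterexample: $A=B_{\diff\rho}(0)\subseteq\rti$ satisfies all the hypotheses (non-empty, convex, sharply open, property (i) holds for balls, and the least upper bound is $\sigma=\diff\rho$); for $q=0$ the point $y=\diff\rho-1$ is not in $A$ and lies in $\mathrm{ext}(A)$, so $L=I$ works in hypothesis (i), yet $0\in A$ and $0\not\leq y$ --- and there is no contradiction to be had, since the sharp supremum exists even though $\sigma-\diff\rho^{0}\notin A$. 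The flaw in your justification of the claim is structural: $y\in_{L'}\mathrm{ext}(A)$ means $|a-y|$ is invertible on $L'$ for \emph{every} $a\in A$, so no element of $A$ can agree with $y$ on a co-final portion of $L'$; consequently the ``iterate and patch'' construction you defer to the end (and acknowledge as the hard part) cannot exist. Convexity and openness only ever produce points of $A$ from points of $A$ --- starting from $a^{*}$ and $B_{r}(a^{*})$ you never leave the convex set $B_{r}(a^{*})$ --- so they can never bridge the invertible gap to $y$. To contradict $y\in_{L'}\mathrm{ext}(A)$ you need $y$ to be \emph{sandwiched}: some $x\in A$ with $x\leq_{K}y<_{K}a^{*}$ on a co-final $K\subseteq_{0}L'$, so that $y$ is a convex combination of two points of $A$ on $K$ and hence $y\in_{K}A$. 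Your argument never produces such an $x$, and in the example above none exists.

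This is exactly where the paper's proof differs: it first shows $x<\sigma$ (invertibly) for $x\in A$, then proves that the half-open segment $\{(1-t)x+t\sigma\mid t\in[0,1)\}$ lies in $A$ --- an excluded point $\overline{y}$ of that segment satisfies $x\leq\overline{y}<\sigma$, so the sandwich argument forces all of $A$ below $\overline{y}$ on $L$, making $e_{L}\overline{y}+e_{L^{c}}\sigma$ an upper bound strictly smaller than $\sigma$ on $L$ --- and finally picks segment points at distance $\diff\rho^{p}$ from $\sigma$ with $p\geq q$ chosen large enough that $\diff\rho^{p}\leq\sigma-x$. If you want to keep your formulation, you must restrict to $q$ large in this sense (which does suffice for condition (2) of the sharp supremum, since $\overline{s}$ found for a larger $q$ works for all smaller ones); the sandwich $x<y$ then becomes available and your key claim becomes provable by the betweenness argument. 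But that restriction, and the preliminary fact that $x<\sigma$ for $x\in A$, are precisely what your write-up is missing.
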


\begin{proof}
We denote by $\sigma$ the least upper bound of $A$. Since $A$ is
a non-empty set, there exists $x\in A$, and hence $x\leq\sigma$.
We necessary have $x<\sigma$. Indeed, if $x\not<\sigma$, then by
Lem.~5 of \cite{MTAG21}, there exists $S\subseteq_{0}I$ such that
$x\geq_{S}\sigma$ which, together with the inequality $x\leq\sigma$,
implies that $x=_{S}\sigma$. Since $A$ is a sharply open set, $x+\diff\rho^{q}=_{S}\sigma+\diff\rho^{q}\in_{S}A$
for some $q\in\N$ sufficiently large, which contradicts the fact
that $\sigma$ is the least upper bound. Hence, we necessary have
$x<\sigma$. We claim now that 
\[
\forall t\in[0,1):\,(1-t)x+t\sigma\in A.
\]
Indeed, if the claim does not holds, there exists $\overline{t}\in[0,1)$
such that $\overline{y}:=((1-\overline{t})x+\overline{t}\sigma)\not\in A$.
It follows that 
\begin{equation}
x\leq\bar{y}<\sigma.\label{eq:3.25}
\end{equation}
By \ref{enu:ExtPro}, there exists $L\subseteq_{0}I$ such that $\overline{y}\in_{L}\mathrm{ext}(A)$.
We necessary have $z\leq_{L}\overline{y}$ for all $z\in A$. Indeed,
if there exists $z\in A$ such that $x\leq_{K}\overline{y}<_{K}z$
for some $K\subseteq_{0}L$, the convexity of $A$ implies that $\overline{y}\in_{K}A$
which contradicts the fact that $\overline{y}\in_{L}\mathrm{ext}(A)$.
It follows that $\overline{\sigma}:=e_{L}y+e_{L^{c}}\sigma$ is an
upper bound of $A$ and since $\sigma$ is the least upper bound we
have $\overline{\sigma}\geq\sigma$ and in particularly $\sigma\le_{L}\overline{\sigma}=_{L^{\text{}}}y$
which contradicts \eqref{eq:3.25}. Thus, $(1-t)x+t\sigma\in A$,
$\forall t\in[0,1)$. We prove now that $\sigma$ is the sharp supremum
of $A$. For every $q\in\mathbb{N}$, set $x_{q}=(1-t_{q})x+t_{q}\sigma\in A$
with $t_{q}=1-\diff\rho^{p}(\sigma-x)^{-1}$ where $p>q$ is sufficiently
large so that $t_{q}\in[0,1)$. It follows that $\sigma-\diff\rho^{q}\leq x_{q}$
which proves that $\sigma$ is the supremum of $A$.
\end{proof}
Similarly, if the set $A$ in Thm.~\ref{thm:glb=000020implies=000020inf}
has the greatest lower bound then the infimum of $A$ exists.

We prove in the next proposition that property \ref{enu:ExtPro} of
Thm.~\ref{thm:glb=000020implies=000020inf} holds for any strongly
internal set of $\rti$.
\begin{prop}
\label{prop:ExtProp}Let $A=\langle A_{\eps}\rangle\subseteq\rti$
be a non-empty set. Then, property \ref{enu:ExtPro} of Thm.~\ref{thm:glb=000020implies=000020inf}
holds.
\end{prop}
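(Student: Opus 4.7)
The plan is to combine the definition of strongly internal sets with a convenient $\eps$-wise reformulation of the strong exterior. Specifically, I first establish the working characterization
\[
\forall B\subseteq\rti\,\forall L\subseteq_{0}I:\ y\in_{L}\mathrm{ext}(B)\ \Longleftrightarrow\ \forall K\subseteq_{0}L:\,y|_{K}\notin B|_{K}.
\]
This rests on the elementary fact that a number $z\in\rti$ is invertible if and only if there is no cofinal $K\subseteq_{0}I$ on which $z|_{K}=0$: one direction is immediate from the definition of negligibility, while the converse follows from a standard diagonal extraction over the cofinal sets $\{\eps\in I\mid|z_{\eps}|<\rho_{\eps}^{q}\}$, $q\in\N$, which must all exist when $z$ is non-invertible. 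Applying this inside $\rti|_{L}$ to $|a-x|$ as $a$ ranges over $A|_{L}$ yields the equivalence above, which is essentially the reformulation of $\mathrm{ext}$ already recalled from \cite[Lem.~3.4]{GiKu18}.

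Next, I extract the required cofinal subset directly from the hypothesis $x\in\rti\setminus A$. Since $A=\langle A_{\eps}\rangle$ is strongly internal, the defining clause in Sec.~\ref{subsec:IntSet} gives
\[
x\notin A\ \Longleftrightarrow\ \exists[x_{\eps}]=x,\ \exists L\subseteq_{0}I,\ \forall\eps\in L:\,x_{\eps}\notin A_{\eps},
\]
so I fix such a representative $(x_{\eps})$ together with such an $L$.

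Finally, I claim this $L$ witnesses property \ref{enu:ExtPro}. For any $K\subseteq_{0}L$, the restricted net $(x_{\eps})_{\eps\in K}$ is a representative of $x|_{K}\in\rti|_{K}$, and by construction $x_{\eps}\notin A_{\eps}$ for every $\eps\in K\subseteq L$. Hence the defining condition of $A|_{K}=\langle A_{\eps}\rangle|_{K}$ fails for this representative of $x|_{K}$, giving $x|_{K}\notin A|_{K}$. Plugging into the working characterization yields $x\in_{L}\mathrm{ext}(A)$, which is exactly property \ref{enu:ExtPro}. The only mildly subtle step is the invertibility characterization in the first paragraph, in particular the diagonal extraction; once that is in hand, the rest of the argument is just bookkeeping with representatives of a strongly internal set.
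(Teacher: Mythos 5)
Your proof is correct, and its first half coincides with the paper's: both arguments start by negating the defining condition of the strongly internal set $\langle A_{\eps}\rangle$ to produce a representative $(x_{\eps})$ of $x$ and an $L\subseteq_{0}I$ with $x_{\eps}\in A_{\eps}^{c}$ for all $\eps\in L$. The two arguments then diverge in how they certify $x\in_{L}\mathrm{ext}(A)$. The paper observes that $x\in_{L}[A_{\eps}^{c}]$ and invokes the distance characterization of strongly internal sets (Thm.~10.(ii) of \cite{GKV24}): any $a\in\langle A_{\eps}\rangle$ satisfies $d(a_{\eps},A_{\eps}^{c})\geq\rho_{\eps}^{q}$ for some $q$ and all small $\eps$, which immediately gives the uniform lower bound $|a-x|\geq\diff\rho^{q}>0$ and hence $[A_{\eps}^{c}]\subseteq\mathrm{ext}\langle A_{\eps}\rangle$. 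You instead route the conclusion through the cofinal-restriction characterization of the strong exterior (the relativized form of \cite[Lem.~3.4]{GiKu18}), reducing everything to the dichotomy ``$z$ invertible iff $z|_{K}\neq0$ for every $K\subseteq_{0}I$,'' which you re-derive by diagonal extraction, and then checking that $x|_{K}\notin A|_{K}$ for every $K\subseteq_{0}L$ because the fixed representative witnesses failure of the $\langle\cdot\rangle$-condition on each such $K$. Both routes are sound; the paper's is a one-line deduction from a cited quantitative theorem and yields an explicit $\diff\rho^{q}$ separating $x$ from $A$ on $L$, whereas yours is more self-contained (it uses only the order/invertibility dichotomy and no metric information about $A_{\eps}$) at the cost of essentially re-proving the cited lemma on $\mathrm{ext}$. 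One small point worth making explicit in your first paragraph: the implication from ``$\forall K\subseteq_{0}L:\,y|_{K}\notin B|_{K}$'' to ``$y\in_{L}\mathrm{ext}(B)$'' uses that restriction is compatible with membership, i.e.\ $a\in B|_{L}\Rightarrow a|_{K}\in B|_{K}$ for $K\subseteq_{0}L$; for $B=\langle A_{\eps}\rangle$ this holds because a representative over $K$ extends to one over $L$ by gluing with the original net on $L\setminus K$, but it should be said rather than left implicit.
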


\begin{proof}
Let $x\in\rti$. Assume that $x\not\in A$. Then, by definition, there
exists a representative $[x_{\eps}]$ of $x$ and $L\subseteq_{0}I$
such that $x_{\eps}\in A_{\eps}^{c}$ for all $\eps\in L$. Thus,
$x\in_{L}[A_{\eps}^{c}]\subseteq\mathrm{ext}\langle A_{\eps}\rangle$
where the latter inclusion follows easily from \cite[Thm.~10.(ii)]{GKV24}
and the definition of the strong exterior.
\end{proof}
\begin{rem}
Let $A\subseteq\mathcal{G}$, $x\in\mathcal{G}$ and consider the
property \ref{enu:ExtPro} of Thm.~\ref{thm:glb=000020implies=000020inf}
in the general context of an $\rti$-module $\mathcal{G}$ clearly
stated as 
\begin{equation}
\forall x\in\mathcal{G}\setminus A\,\exists L\subseteq_{0}I:\,x\in_{L}\mathrm{ext}(A)\label{eq:ExtProp2}
\end{equation}
\begin{enumerate}
\item Clearly, the property holds when $\mathcal{G}$ is a Colombeau space
based on a family of normed spaces $E$, and $A$ is strongly internal
set. The proof is identical to the proof of Prop.~\ref{prop:ExtProp}.
\item The property holds for any ball $B_{r}(y)\subseteq\mathcal{G}$ since
$A:=\{z\in\mathcal{G}\mid||z-y||\geq r\}=\mathrm{ext}(B_{r}(y))$.
Indeed, if $z\in A$ then for any $a\in B_{r}(y)$ we have 
\[
||z-a||\geq||z-y||-||y-a||>0
\]
which proves the direct inclusion. Let now $t\in\mathcal{G}$ be such
that $||t-a||>0$ for all $a\in B_{r}(y)$. For any $q\in\N$, set
$a_{q}:=y+(r-\diff\rho^{q})||t-y||^{-1}(t-y)\in B_{r}(y)$. Then 
\[
||t-y||=||t-a_{q}||+||a_{q}-y||>r-\diff\rho^{q}.
\]
Letting $q$ tends to $\infty$ we get $||t-y||\geq r$, which proves
the opposite inclusion.
\item The property holds also whenever the property $\mathrm{ext}(\mathrm{ext}(A))\subseteq A$
holds. Indeed, assume (by negation) that $x\not\in_{L}\mathrm{ext}(A)$
for all $L\subseteq_{0}I$. \cite[Lem.~3.4]{GiKu18} implies that
$x\in\mathrm{ext}(\mathrm{ext}(A))$. Therefore, if we assume that
$\mathrm{ext}(\mathrm{ext}(A))\subseteq A$, we get that $x\in A$,
which contradicts $x\not\in A$.
\end{enumerate}
\end{rem}

Before defining the Minkowski functional on convex subsets of $\mathcal{G}$,
we need the following proposition.
\begin{prop}
\label{prop:proposition12}Let $C$ be a non-empty sharply open convex
subset of $\mathcal{G}$, and let $x_{0}\in C$. Then, for any $x\in\mathcal{G}$
the set 
\[
C_{x}:=\{\alpha\in\rti_{>0}\mid x_{0}+\alpha^{-1}(x-x_{0})\in C\}
\]
is a non-empty  sharply open convex subset for every $x\in\mathcal{G}$
(note that $C_{x}$ depends also on $x_{0}$). Moreover, if property
\eqref{eq:ExtProp2} holds for the set $C\subseteq\mathcal{G}$, then
property \ref{enu:ExtPro} of Thm.~\eqref{thm:glb=000020implies=000020inf}
holds for the set $C_{x}\subseteq\rti$ as well.
\end{prop}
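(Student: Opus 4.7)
The plan is to verify the four claims (non-emptiness, convexity, sharp openness, and the exterior property of $C_x$) in turn, each resting on an explicit computation with the parametrization $\alpha \mapsto y_\alpha := x_0 + \alpha^{-1}(x - x_0)$.

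For non-emptiness, the sharp openness of $C$ at $x_0$ yields $r \in \rti_{>0}$ with $B_r(x_0) \subseteq C$; then $\alpha := (2\|x - x_0\| + r)/r \in \rti_{>0}$ satisfies $\alpha^{-1}\|x - x_0\| < r$, whence $y_\alpha \in B_r(x_0) \subseteq C$. For convexity, given $\alpha, \beta \in C_x$ and $t \in [0,1] \subseteq \rti$, set $\gamma := t\alpha + (1-t)\beta$; one first observes that $\gamma \in \rti_{>0}$ (it is bounded below by a common positive invertible lower bound for representatives of $\alpha$ and $\beta$). The algebraic identity
\[
y_\gamma \;=\; \tfrac{t\alpha}{\gamma}\, y_\alpha \;+\; \tfrac{(1-t)\beta}{\gamma}\, y_\beta,
\]
whose coefficients lie in $[0,1]$ and sum to $1$, combined with convexity of $C$, gives $y_\gamma \in C$, so $\gamma \in C_x$.

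For sharp openness at $\alpha \in C_x$, apply sharp openness of $C$ at $y_\alpha$ to get $r \in \rti_{>0}$ with $B_r(y_\alpha) \subseteq C$. The distance identity
\[
\|y_\beta - y_\alpha\| \;=\; \frac{|\alpha - \beta|}{\alpha\beta}\,\|x - x_0\|
\]
reduces the task to exhibiting a single $\delta \in \rti_{>0}$ such that $|\beta - \alpha| < \delta$ forces both $\beta \in \rti_{>0}$ (guaranteed by $\delta < \alpha$, as then $\beta > \alpha - \delta > 0$ is invertible) and $\|y_\beta - y_\alpha\| < r$ (guaranteed by $\delta < r\alpha^2/(2(\|x - x_0\| + 1))$, after bounding $\alpha\beta > \alpha^2/2$). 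Setting $\delta := ab/(a+b)$ for $a := \alpha/2$ and $b := r\alpha^2/(2(\|x - x_0\|+1))$ produces such a $\delta \in \rti_{>0}$, so $B_\delta(\alpha) \subseteq C_x$.

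For the exterior property, let $\alpha \in \rti \setminus C_x$. If $\alpha \in \rti_{>0}$ but $y_\alpha \notin C$, the hypothesis \eqref{eq:ExtProp2} applied to $C$ at $y_\alpha$ yields $L \subseteq_0 I$ with $y_\alpha \in_L \mathrm{ext}(C)$; then for any $\beta \in C_x|_L$, the distance formula and $\alpha\beta \in (\rti|_L)_{>0}$ convert $|y_\alpha - y_\beta| >_L 0$ into $|\alpha - \beta| >_L 0$, giving $\alpha \in_L \mathrm{ext}(C_x)$. If $\alpha \notin \rti_{>0}$, two subcases arise: when $\alpha$ fails $\alpha \geq 0$, a standard sharp-order lemma (Lem.~5 of \cite{MTAG21}) gives $L$ with $\alpha <_L 0$; when $\alpha \geq 0$ but is non-invertible, a diagonal extraction of $\eps_k \downarrow 0$ with $\alpha_{\eps_k} < \rho_{\eps_k}^k$ produces $L := \{\eps_k\}$ on which $\alpha|_L$ is negligible, i.e.\ $\alpha|_L = 0$ in $\rti|_L$. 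In either subcase every $\beta \in C_x|_L \subseteq (\rti|_L)_{>0}$ satisfies $|\alpha|_L - \beta| \geq \beta > 0$, whence $\alpha \in_L \mathrm{ext}(C_x)$. The main obstacle is this last non-invertible subcase: one must pass from pointwise smallness of $\alpha_\eps$ on a cofinal set to a single $L$ on which $\alpha|_L$ vanishes as an element of $\rti|_L$, so that the bound $|\alpha|_L - \beta| \geq \beta$ holds uniformly in $\beta \in C_x|_L$; the diagonal construction above handles this but must be verified to produce a representative that is actually $E_J$-negligible on $L$ before translating back to the $\rti$-module language.
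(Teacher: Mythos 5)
Your proof is correct and follows the same overall strategy as the paper's: everything is reduced to explicit computations with the parametrization $y_\alpha=x_0+\alpha^{-1}(x-x_0)$, sharp openness of $C$ gives non-emptiness and openness of $C_x$, and the exterior property is handled by a sign/invertibility case split on $\alpha$ followed by transporting $y_\alpha\in_L\mathrm{ext}(C)$ back through the distance formula. The one genuinely different step is convexity: the paper first invokes the interleaving lemma (Lem.~7 of \cite{MTAG21}) to reduce to the case $\alpha\le\beta$ on each piece and then reparametrizes the segment from $\alpha^{-1}x$ to $\beta^{-1}x$ by $\lambda_t=t\beta/(\alpha+t(\beta-\alpha))$, whereas your symmetric identity $y_\gamma=\tfrac{t\alpha}{\gamma}y_\alpha+\tfrac{(1-t)\beta}{\gamma}y_\beta$ with $\gamma=t\alpha+(1-t)\beta$ is algebraically the same substitution but makes the order decomposition unnecessary, since the coefficients are manifestly in $[0,1]$ and sum to $1$; this is a mild simplification. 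Your openness argument is also a two-sided ball estimate where the paper uses a one-sided bound together with $x_0\in C$ and convexity, but both yield a sharp ball around $\lambda$. One small point to make explicit in the exterior step: converting $\|y_\alpha-y_\beta\|>_L0$ into $|\alpha-\beta|>_L0$ requires controlling $\|x-x_0\|$ in the denominator; the paper does this by first noting $\|x-x_0\|>_L0$ (from $x_0\in C$ and $y_\alpha\in_L\mathrm{ext}(C)$), while in your version it suffices to use moderateness, $\|x-x_0\|\le\diff\rho^{-N}$, to get $|\alpha-\beta|\ge_L c\,\alpha\beta\,\diff\rho^{N}$ with $c$ invertible. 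Either justification should be stated.
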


\begin{proof}
By a translation argument we can assume that $x_{0}=0$. We first
start by proving that $C_{x}$ is non empty. Since $C$ is a sharply
open set, there exists $r\in\rti_{>0}$ such that $B_{r}(0)\subseteq C$.
It is easy to check that 
\[
\forall\alpha\in\rti_{>0}:\ \alpha>\frac{||x||}{r}\Longrightarrow\alpha^{-1}x\in B_{r}(0)\subseteq C
\]
which proves that $C_{x}$ in non-empty. We prove now that $C_{x}$
is sharply open. Let $\lambda\in C_{x}$. By definition $\lambda\in\rti_{>0}$
and $\lambda^{-1}x\in C$. Since $C$ is sharply open, there exists
$s\in\rti_{>0}$ such that $B_{s}(\lambda^{-1}x)\subseteq C$. One
can easily deduce that $\beta\in C_{x}$ for all $\beta>\lambda-\frac{s\lambda^{2}}{||x||+s\lambda}>0$
where the latter inequality holds when choosing $s$ sufficiently
large. We prove now the convexity of $C_{x}$; Let $\alpha$, $\beta\in\rti_{>0}$
be such that $\alpha^{-1}x$, $\beta^{-1}x\in C$. By Lem.~7, \emph{(i)}
of \cite{MTAG21}, we have $\alpha\leq\beta$ or $\beta\leq\alpha$
or there exists $L\subseteq_{0}I$ such that $L^{c}\subseteq_{0}I$,
$\alpha\leq_{L}\beta$ and $\beta\leq_{L^{c}}\alpha$. Hence, without
loss of generality we assume that $\alpha\leq\beta$ in $\rti|_{L}$,
and in the following we work in the ring $\rti|_{L}$. Since $C$
is convex 
\begin{equation}
\forall\lambda\in[0,1]:\,\alpha^{-1}x+\lambda(\beta^{-1}-\alpha^{-1})x\in C.\label{eq:3.26}
\end{equation}
We would like to prove that 
\begin{equation}
\forall t\in[0,1]:\,(\alpha+t(\beta-\alpha))^{-1}x\in C.\label{eq:3.27}
\end{equation}
For all $t\in[0,1]$, we set 
\[
\lambda_{t}=\frac{t\beta}{\alpha+t(\beta-\alpha)}.
\]
which is well defined and belongs to $[0,1]$. Replacing $\lambda_{t}$
in \eqref{eq:3.26} we get \eqref{eq:3.27}.

We prove now the second part. Let $\lambda\in\rti$ be such that $\lambda\not\in C_{x}$.
If $\lambda\leq_{L}0$ for some $L\subseteq_{0}I$ then $\lambda\in_{L}\mathrm{ext}(C_{x})$.
If $\lambda\not\leq_{L}0$ for all $L\subseteq_{0}I$, then $\lambda>0$
(see Lem.~5 of \cite{MTAG21}). Hence, since $\lambda\not\in C_{x}$,
we have $\lambda^{-1}x\not\in C$. Since the set $C$ satisfies \eqref{eq:ExtProp2},
there exists $L\subseteq_{0}I$ such that $\lambda^{-1}x\in_{L}\mathrm{ext}(C)$.
The latter implies in particular that $||x||>_{L}0$ because $x_{0}=0\in C$.
For $\beta\in C_{x}$, we have $\beta\in\rti_{>0}$ and $\beta^{-1}x\in C$,
and hence $||\lambda^{-1}x-\beta^{-1}x||>_{L}0$ which implies that
$|\lambda^{-1}-\beta^{-1}|>_{L}0$ and hence $|\lambda-\beta|>_{L}0$.
Therefore, $\lambda\in_{L}\mathrm{ext}(C_{x})$, which completes the
proof.
\end{proof}
For $A\subseteq\rti$, we write $\exists\text{g.l.b.}(A)\in\rti$
to say that the greatest lower bound of the set $A$ exists in $\rti$.
\begin{defn}
Let $\mathcal{G}$ be an $\rti$-normed $\rti$-module, $C\subseteq\mathcal{G}$
be a sharply open convex subset and $x_{0}\in C$. Assume that the
set $C$ satisfies property \eqref{eq:ExtProp2} and that the following
property holds 
\begin{equation}
\forall x\in\mathcal{G}:\,\exists\text{g.l.b.}\{\alpha\in\rti_{>0}\mid x_{0}+\alpha^{-1}(x-x_{0})\in C\}\in\rti.\label{eq:existence_glb}
\end{equation}
Then, the map $p:\mathcal{G}\longrightarrow\rti_{\geq0}$ given by
\begin{equation}
p(x)=\text{inf}\{\alpha\in\rti_{>0}\mid x+\alpha^{-1}(x-x_{0})\in C\}.\label{eq:Minkowski_functional}
\end{equation}
is well defined and is called \emph{Minkowski functional} of the set
$C$ (associated with the generalized point $x_{0})$.
\end{defn}

The well-posedness of $p$ is a consequence of Prop.~\ref{prop:proposition12}
and of Thm.~\ref{thm:glb=000020implies=000020inf}.
\begin{lem}
\label{lem:Minkowsky=000020prop}Let $C$ be a sharply open convex
set of $\mathcal{G}$ that contains $0$. Assume that the Minkowski
functional $p$ of the set $C$ (associated with the generalized point
$0$) is well defined . Then, the following properties holds
\begin{enumerate}
\item \label{enu:mink1}$p(\lambda x)=\lambda p(x)$ for every $x\in\mathcal{G}$
and for every $\lambda\in\rti_{>0}$.
\item \label{enu:mink2}There exists $M\in\rti_{>0}$ such that $\forall x\in\mathcal{G}:\ 0\le p(x)\leq M||x||$.
\item \label{enu:mink3}$C=\{x\in\mathcal{G}\mid p(x)<1\}$.
\item \label{enu:mink4}$p(x+y)\leq p(x)+p(y)$, for every $x$, $y\in\mathcal{G}$.
\end{enumerate}
\end{lem}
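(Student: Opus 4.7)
The plan is to mirror the four classical proofs for the Minkowski functional, leveraging the sharp infimum definition from Sec.~\ref{subsec:Sup} and the sharp-openness and convexity of $C$. Throughout, let $S(z) := \{\alpha \in \rti_{>0} \mid \alpha^{-1}z \in C\}$, so that $p(z) = \inf S(z)$ by \eqref{eq:Minkowski_functional}. Item \ref{enu:mink1} is immediate from the observation that $\alpha \in S(\lambda x) \iff \lambda^{-1}\alpha \in S(x)$, which yields $S(\lambda x) = \lambda\cdot S(x)$; since scaling by an invertible positive element of $\rti$ is a sharp-continuous order isomorphism on $\rti_{>0}$, the sharp infimum transforms accordingly. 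For \ref{enu:mink2}, pick $r \in \rti_{>0}$ with $B_r(0) \subseteq C$ and set $\alpha_q := r^{-1}||x|| + \diff\rho^q \in \rti_{>0}$: then $\alpha_q r - ||x|| = r\diff\rho^q > 0$ is invertible, so $||\alpha_q^{-1}x|| < r$, whence $\alpha_q \in S(x)$ and $p(x) \le \alpha_q$ for every $q$; passing to the limit $q\to\infty$ yields $p(x) \le r^{-1}||x||$, so $M := r^{-1}$ works.

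For \ref{enu:mink3}, suppose first $x \in C$. Sharp-openness of $C$ at $x$ provides $r \in \rti_{>0}$ with $B_r(x) \subseteq C$. Setting $t := r/(r + ||x|| + 1) \in \rti_{>0}$ yields $\alpha := 1 - t$ with $1-\alpha = t$ invertibly positive, and a direct computation gives $||\alpha^{-1}x - x|| = \tfrac{t}{1-t}||x|| = \tfrac{r\,||x||}{||x||+1} < r$, so $\alpha^{-1}x \in C$ and $p(x) \le \alpha < 1$. Conversely, if $p(x) < 1$, then $1 - p(x) \in \rti_{>0}$; choose $q$ large enough that $\diff\rho^q < 1 - p(x)$, and invoke the sharp infimum property to obtain $\bar\alpha \in S(x)$ with $\bar\alpha \le p(x) + \diff\rho^q < 1$. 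Since $\bar\alpha \in (0,1) \subseteq [0,1]$ and both $\bar\alpha^{-1}x$ and $0$ lie in $C$, convexity applied to the combination $x = \bar\alpha(\bar\alpha^{-1}x) + (1-\bar\alpha)\cdot 0$ gives $x \in C$.

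For \ref{enu:mink4}, fix $q \in \N$ and use the sharp infimum property to pick $\alpha \in S(x)$ and $\beta \in S(y)$ with $\alpha \le p(x) + \diff\rho^q$ and $\beta \le p(y) + \diff\rho^q$. Both are positive invertible, so $\alpha + \beta \in \rti_{>0}$ and $\tau := \alpha/(\alpha+\beta) \in [0,1]$; the identity
\[
\tau\,\alpha^{-1}x + (1-\tau)\,\beta^{-1}y = (\alpha+\beta)^{-1}(x+y)
\]
together with convexity shows $(\alpha+\beta)^{-1}(x+y) \in C$, i.e.~$\alpha+\beta \in S(x+y)$, so $p(x+y) \le \alpha + \beta \le p(x)+p(y)+2\diff\rho^q$. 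Letting $q\to\infty$ yields the triangle inequality. The subtleties to watch throughout are the careful distinction between $\le$ and $<$ (the latter requiring invertibility of the difference), which governs the choices of $t$ in \ref{enu:mink3} and of $q$ in its converse, and the closedness of $\rti_{\ge 0}$ under taking limits $\diff\rho^q \to 0$ — a standard property of the sharp topology that is implicitly used in \ref{enu:mink2} and \ref{enu:mink4} whenever we pass to the limit in $q$.
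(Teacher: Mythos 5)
Your proof is correct and follows essentially the same route as the paper: part \ref{enu:mink1} via $\inf(\lambda S)=\lambda\inf(S)$, part \ref{enu:mink2} by inserting a small ball $B_r(0)\subseteq C$ and letting $q\to\infty$, part \ref{enu:mink3} by sharp openness in one direction and convexity with $0\in C$ in the other, and part \ref{enu:mink4} by the standard convex-combination identity followed by $q\to\infty$. The only (harmless) stylistic difference is that in \ref{enu:mink3}--\ref{enu:mink4} you work directly with near-infimum witnesses $\bar\alpha\in S(x)$ supplied by the sharp-infimum definition, whereas the paper routes part \ref{enu:mink4} through parts \ref{enu:mink1} and \ref{enu:mink3} applied to $x/(p(x)+\diff\rho^{q})$; both are sound.
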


\begin{proof}
\ref{enu:mink1}: Let $x\in\mathcal{G}$ and $\lambda\in\rti_{>0}$
. We have 
\begin{flalign*}
p(\lambda x) & =\text{inf}\{\alpha\in\rti_{>0}\mid(\alpha\lambda^{-1})^{-1}x\in C\}\\
 & \overset{\beta=\alpha\lambda^{-1}}{=}\text{inf}\{\lambda\beta\in\rti_{>0}\mid\beta{}^{-1}x\in C\}\\
 & =\lambda\text{inf}\{\beta\in\rti_{>0}\mid\beta{}^{-1}x\in C\}=\lambda p(x)
\end{flalign*}
where we used the property $\text{inf}(\lambda A)=\lambda\text{inf}(A)$
in the third eq\textcolor{black}{uality, see Lem.~24 }\textcolor{black}{\emph{(i)}}\textcolor{black}{{}
of \cite{MTAG21} for the proof.}

\ref{enu:mink2}: Since $C$ is a sharply open set, there exists $r\in\rti_{>0}$
such that $B_{r}(0)\subseteq C$. Let $x\in\mathcal{G}$. Then $(r-\diff\rho^{q})\frac{x}{||x||+\diff\rho^{q}}\in C$
for $q$ sufficiently large. By definition of $p$ we have 
\begin{equation}
p(x)\leq\frac{||x||+\diff\rho^{q}}{r-\diff\rho^{q}}
\end{equation}
since $q$ is arbitrary large, letting $q$ tends to $\infty$ we
obtain \eqref{enu:mink2} with $M=r^{-1}.$

\ref{enu:mink3}: Let $x\in C$. Since $C$ is a sharply open set
$(1+\diff\rho^{q})x\in C$ for $q$ sufficiently large. Thus, $p(x)\leq\frac{1}{(1+\diff\rho^{q})}<1$.
Conversely, if $p(x)<1$, then take $\alpha:=p(x)+\diff\rho^{q}<1$
for $q$ sufficiently larg\textcolor{black}{e (see Lem.~8 of \cite{GKV24}).
}It follows that $\alpha^{-1}x\in C$. Since $C$ is convex and $0,$
$(\alpha^{-1}x)\in C$, we gave $t(\alpha^{-1}x)+(1-t)0\in C$ for
all $t\in[0,1]$$.$ Choosing $t=\alpha$ we obtain $x\in C$.

\ref{enu:mink4}: Let $x$, $y\in\mathcal{G}$ and let $q\in\mathbb{N}.$
\ref{enu:mink1} and \ref{enu:mink3} imply that $\frac{x}{p(x)+\diff\rho^{q}}$,
$\frac{y}{p(y)+\diff\rho^{q}}\in C$. Indeed, using \ref{enu:mink1}
with $\lambda=(p(x)+\diff\rho^{q})^{-1}$ we obtain $p\left(\frac{x}{p(x)+\diff\rho^{q}}\right)=\frac{p(x)}{p(x)+\diff\rho^{q}}$.
Since the right hand side of the latter equality is strictly less
than $1$, \ref{enu:mink3} implies that $\frac{x}{p(x)+\diff\rho^{q}}\in C$.
Since $C$ is convex, $\frac{tx}{p(x)+\diff\rho^{q}}+\frac{(1-t)y}{p(y)+\diff\rho^{q}}\in C$
for all $t\in[0,1].$ Choosing $t=\frac{p(x)+\diff\rho^{q}}{p(x)+p(y)+2\diff\rho^{q}}\in[0,1]$
we obtain $\frac{x+y}{p(x)+p(y)+2\diff\rho^{q}}\in C$. Using now
\ref{enu:mink1} and \ref{enu:mink3} we obtain 
\[
\frac{p(x+y)}{p(x)+p(y)+2\diff\rho^{q}}=p\left(\frac{x+y}{p(x)+p(y)+2\diff\rho^{q}}\right)<1.
\]
 It follows 
\[
p(x+y)<p(x)+p(y)+2\diff\rho^{q}.
\]
Letting $q\rightarrow\infty$ we obtain $p(x+y)\leq p(x)+p(y)$ which
completes the proof.
\end{proof}
\begin{example}
\label{exa:example=000020minkowski=000020}Let $C$ be a sharply open
convex subset of an $\rti$-normed $\rti$-module $\mathcal{G}$ containing
$0$, and assume that the Minkowski functional $p$ of the set $C$
(with $x_{0}=0$) is well defined.
\begin{enumerate}
\item If $C=\mathcal{G}$, then $p$ is identically null. Indeed, for every
$x\in\mathcal{G}$ we have $\lambda^{-1}x\in\mathcal{G}$ for every
$\lambda\in\rti_{>0}$. This yields $\{\alpha\in\rti_{>0}\mid\alpha^{-1}x\in C\}=\rti_{>0}$.
Thus, $\inf\rti_{>0}=\inf\{\alpha\in\rti_{>0}\mid\alpha^{-1}x\in C\}=0$.
\item \label{enu:MinkBall}If $C=B_{r}(0)$ with $r\in\rti_{>0}$ then $p(x)=\frac{||x||}{r}$.
Indeed, for every $x\in\mathcal{G}$ \textcolor{black}{we have}\textcolor{red}{{}
}\textcolor{black}{
\[
\{\alpha\in\rti_{>0}\mid\alpha^{-1}x\in B_{r}(0)\}=\left[\alpha\in\rti_{>0}\mid\frac{||x||}{r}<\alpha\right]
\]
which implies easily the desired result.}
\item \label{enu:minkowsku=000020sharply=000020bounded=000020}If $C$ is
sharply bounded, then $p(x)=0$ if and only if $x=0$. Indeed, when
$x=0$ then $\alpha^{-1}x=0$ for every $\alpha\in\rti_{>0}$. Thus,
$p(x)=0$. Conversely, since $C$ is sharply bounded, there exists
$q\in\mathbb{N}$ such that $C\subseteq B_{\diff\rho^{-q}}(0)$. Hence
\[
\{\alpha\in\rti_{>0}\mid\alpha^{-1}x\in C\}\subseteq\{\alpha\in\rti_{>0}\mid\alpha^{-1}x\in B_{\diff\rho^{-q}}(0)\}.
\]
By taking the infimum and using\textcolor{black}{{} }\ref{enu:MinkBall}\textcolor{black}{{}
and Lem.~24 of \cite{MTAG21} we ob}tain $\diff\rho^{q}||x||\leq p(x).$
Thus, $p(x)=0$ implies $x=0$.
\end{enumerate}
\end{example}

In the next proposition we give the first example of subsets for which
the associated Minkowski functional is well defined.
\begin{prop}
\label{prop:=000020minkowski=000020strongly=000020internal=000020}Let
$C\subseteq\mathcal{G}_{E}$ be a non-empty convex strongly internal
set. Assume that $C$ has a sharply bounded representative $(C_{\eps})$
that satisfies the following property: 
\begin{equation}
\forall(a_{\eps})\in\prod_{\eps\in I}C_{\eps}\,\forall q\in\N\,\exists u_{q}\in C:\,||u_{q}-[a_{\eps}]||\leq\diff\rho^{q}.\label{eq:1.2-1}
\end{equation}
Then, for any $x_{0}\in C$, the Minkowski functional $p$ of the
set $C$ associated with the generalized point $x_{0}$ is well defined
and 
\begin{equation}
\forall[x_{\eps}]\in\mathcal{G}_{E}:\,p([x_{\eps}])=[\text{\ensuremath{\inf}}\{\lambda\in\mathbb{R}_{>0}\mid x_{0\varepsilon}+\lambda^{-1}(x_{\varepsilon}-x_{0\varepsilon})\in C_{\eps}\}].\label{eq:g.l.b_internal}
\end{equation}
 
\end{prop}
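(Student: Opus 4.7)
The plan is to translate by $x_{0}$ (replacing $C$ with $C-x_{0}$ and $x$ with $x-x_{0}$, under which all hypotheses are preserved) so that without loss of generality $x_{0}=0\in C$, and then to verify the two prerequisites of Def.~\ref{def:Minkowski_functional} by exhibiting the element
$\mu:=[\mu_{\eps}^{*}]\in\rti$, where $\mu_{\eps}^{*}:=\inf\{\lambda\in\R_{>0}\mid\lambda^{-1}x_{\eps}\in C_{\eps}\}$,
as the sharp infimum of $A_{x}:=\{\alpha\in\rti_{>0}\mid\alpha^{-1}x\in C\}$. Property~\eqref{eq:ExtProp2} for $C$ is immediate from the first item in the remark after Prop.~\ref{prop:ExtProp}, since $C$ is strongly internal; identifying the sharp infimum with $\mu$ will simultaneously provide the g.l.b.\ required in~\eqref{eq:existence_glb} and the formula~\eqref{eq:g.l.b_internal}.

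First I would establish moderateness of $(\mu_{\eps}^{*})$. Since $0\in C$ and $C$ is sharply open, I claim there exists $M\in\N$ with $B_{\rho_{\eps}^{M}/2}(0)\subseteq C_{\eps}$ for all $\eps$ small: otherwise one could extract a strictly decreasing sequence $(\eps_{M})$ and points $y_{M}\in B_{\rho_{\eps_{M}}^{M}/2}(0)\setminus C_{\eps_{M}}$ and assemble them into a negligible net that fails to represent $0$ as an element of $C$, contradicting strong internality at $0$. Combined with the sharp bound $C_{\eps}\subseteq B_{\rho_{\eps}^{-K}}(0)$ from the given representative of $C$, this gives
\[
\rho_{\eps}^{K}\|x_{\eps}\|\;\leq\;\mu_{\eps}^{*}\;\leq\;2\rho_{\eps}^{-M}\|x_{\eps}\|
\]
for $\eps$ small, so $(\mu_{\eps}^{*})$ is moderate and $\mu\in\rti$.

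Next I would verify the two sharp-infimum conditions. The lower bound $\mu\leq\alpha$ for each $\alpha\in A_{x}$ is immediate: $\alpha^{-1}x\in C$ together with strong internality forces $\alpha_{\eps}^{-1}x_{\eps}\in C_{\eps}$ on a tail for any representative $(\alpha_{\eps})$, hence $\mu_{\eps}^{*}\leq\alpha_{\eps}$ there. For the approximation side, the claim is $\mu+\diff\rho^{q}\in A_{x}$ for every $q\in\N$. By the definition of the real infimum I pick $\lambda_{\eps}\in[\mu_{\eps}^{*},\,\mu_{\eps}^{*}+\rho_{\eps}^{q+1})$ with $\lambda_{\eps}^{-1}x_{\eps}\in C_{\eps}$; since $(\mu_{\eps}^{*}+\rho_{\eps}^{q})^{-1}x_{\eps}=t_{\eps}\,\lambda_{\eps}^{-1}x_{\eps}+(1-t_{\eps})\cdot 0$ with $t_{\eps}:=\lambda_{\eps}/(\mu_{\eps}^{*}+\rho_{\eps}^{q})\in(0,1)$, convexity of $C_{\eps}$ together with $B_{\rho_{\eps}^{M}/2}(0)\subseteq C_{\eps}$ yields
\[
B_{(1-t_{\eps})\rho_{\eps}^{M}/2}\bigl((\mu_{\eps}^{*}+\rho_{\eps}^{q})^{-1}x_{\eps}\bigr)\;\subseteq\;C_{\eps}.
\]
An exponent chase using $\mu_{\eps}^{*}\leq\rho_{\eps}^{-K_{0}}$ from the moderateness step forces $(1-t_{\eps})\rho_{\eps}^{M}/2\geq\rho_{\eps}^{q+K_{0}+M+1}$ for $\eps$ small, so every negligible perturbation of $(\mu_{\eps}^{*}+\rho_{\eps}^{q})^{-1}x_{\eps}$ lies in $C_{\eps}$ on a tail; strong internality then gives $(\mu+\diff\rho^{q})^{-1}x\in C$, i.e.\ $\mu+\diff\rho^{q}\in A_{x}$.

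Combining these steps, $\mu$ satisfies both conditions of the sharp infimum, so $p(x)=\mu=[\mu_{\eps}^{*}]$; in particular the g.l.b.\ in \eqref{eq:existence_glb} exists, Def.~\ref{def:Minkowski_functional} applies, and the formula~\eqref{eq:g.l.b_internal} is established. The main technical obstacle is the approximation step, where the delicate point is to track the exponents of $\diff\rho$ finely enough to absorb an arbitrary negligible perturbation into the $\eps$-wise ball inside $C_{\eps}$; this is also where property~\eqref{eq:1.2-1} enters, guaranteeing that the $\eps$-level infimum $\mu_{\eps}^{*}$ is invariant under replacement of the sharply bounded representative of $C$ and of the representative of $x$, so that the right-hand side of \eqref{eq:g.l.b_internal} genuinely defines a single element of $\rti$.
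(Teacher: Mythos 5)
There is a genuine gap in your approximation step. You invoke ``convexity of $C_{\eps}$ together with $B_{\rho_{\eps}^{M}/2}(0)\subseteq C_{\eps}$'' to conclude that a whole ball $B_{(1-t_{\eps})\rho_{\eps}^{M}/2}\bigl((\mu_{\eps}^{*}+\rho_{\eps}^{q})^{-1}x_{\eps}\bigr)$ lies in $C_{\eps}$. But $\eps$-wise convexity of the sets $C_{\eps}$ is \emph{not} a hypothesis of the proposition: only the generalized set $C\subseteq\mathcal{G}_{E}$ is assumed convex, together with a sharply bounded representative satisfying \eqref{eq:1.2-1}. The paper stresses this point in the remark immediately following its proof (``the map $x\to\sigma_{\eps}(x)$ is not an $\R$-sublinear map since $C_{\eps}$ is not necessary convex''), and the whole point of the subsequent Prop.~\ref{prop:ConvImlies} is that $\eps$-wise convexity is a \emph{sufficient condition} for \eqref{eq:1.2-1}, not something already available here. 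Without convexity of $C_{\eps}$, the inclusion $t_{\eps}a+(1-t_{\eps})B_{r}(0)\subseteq C_{\eps}$ for $a\in C_{\eps}$ simply fails, and your exponent chase has nothing to stand on.

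A telltale sign is that hypothesis \eqref{eq:1.2-1} never actually does any work in your argument; you only mention at the end that it ``enters'' somewhere. In the paper's proof it is the crucial tool precisely at the point where you went wrong: the $\eps$-wise near-minimizer $z_{\eps}=(\sigma_{\eps}(x_{\eps})+\beta_{\eps})^{-1}x_{\eps}$ lies in $C_{\eps}$ for each $\eps$, but since $C=\langle C_{\eps}\rangle$ is strongly internal, the class $[z_{\eps}]$ need not belong to $C$. Property \eqref{eq:1.2-1} supplies genuine elements $u_{p}\in C$ with $\|z-u_{p}\|<\diff\rho^{p}$, and only then can the convexity of the generalized set $C$ (applied to segments joining $0$, $u_{p}$, and a small multiple of $z-u_{p}$) be used to push $(\,[\sigma_{\eps}(x_{\eps})]+\diff\rho^{q})^{-1}x$ into $C$. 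Your moderateness bounds and the lower-bound half of the sharp infimum are fine, but the approximation half needs to be rebuilt along these lines; you would also then need to supply the representative-independence of $[\mu_{\eps}^{*}]$ in the variable $x$, which you assert but do not prove.
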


\begin{rem}
\textcolor{black}{~}
\begin{enumerate}
\item \textcolor{black}{Note that }\eqref{eq:g.l.b_internal} does not hold
for any representative $(C_{\eps})$ of $C$. Take for example $C=(-1,1)$
and let $(C_{\eps})$ be a representative of $C$ defined by $\forall^{0}\eps:\,C_{\eps}:=(-1,1)\cup(2-z_{\eps},2+z_{\eps})$
where $(z_{\eps})\in\mathcal{N}_{\R^{+*}}$. For $x=2$, one can check
that 
\[
\forall\eps\in I:\,\sigma_{\eps}(2):=\inf\{\lambda\in\R_{>0}\mid\lambda^{-1}2\in C_{\eps}\}=\frac{2}{2+z_{\eps}},
\]
whereas $\inf\{\lambda\in\rti_{>0}\mid\lambda^{-1}x\in C\}=2$. Hence,
\eqref{eq:g.l.b_internal} does not hold, but clearly it holds with
the representative $(\tilde{C}_{\eps})$ given by $\forall\eps:\,\tilde{C}_{\eps}:=(-1,1)$. 
\item If $C=\langle C_{\eps}\rangle\subseteq\rti^{2}$ be such that 
\[
\forall\eps:C_{\eps}:=\left((-1,1)\times(-1,1)\right)\cup\left((-1,2)\times(-z_{\eps},z_{\eps})\right)
\]
where $(z_{\eps})$ is non-negative negligible net. One can check
that 
\[
\sigma_{\eps}((0.5,0))=\inf\{\lambda\in\R_{>0}\mid\lambda^{-1}(0.5,0)\in C_{\eps}\}=0.25
\]
whereas 
\[
\sigma_{\eps}((0.5,2z_{\eps}))=\inf\{\lambda\in\R_{>0}\mid\lambda^{-1}(0.5,2z_{\eps})\in C_{\eps}\}=0.5.
\]
Thus, for this representative $(C_{\eps})$ of $C$, the right hand
side of \eqref{eq:g.l.b_internal} depends on the representatives
of $x$.\\
Moreover, one can easily see that property \eqref{eq:1.2-1} does
not hold e.g.~with the net $(a_{\eps})$ given by $a_{\eps}:=(0,3/4)$
for all $\eps$. 
\item Let $C=\langle C_{\eps}\rangle\subseteq\mathcal{G}_{E}$ be a non-empty
subset. Then, $\overline{C}\subseteq[C_{\eps}]$. Indeed, let $x=[x_{\eps}]\in\overline{C}$
and let $(x_{n})$ be a sequence $x_{n}=[x_{n\eps}]\in C$ such that
\[
\forall n\in\N\,\exists\eps_{n}\in I\,\forall\eps\in(0,\eps_{n}]:x_{n\eps}\in C_{\eps}\,\,\,\text{and}\,\,\,||x_{n\eps}-x_{\eps}||\leq\rho_{\eps}^{n}.
\]
Without loos of generality, we assume that the sequence $(\eps_{n})$
is non-increasing and converging to $0$. Hence, setting $\tilde{x}_{\eps}:=x_{n\eps}$
whenever $\eps\in(\eps_{n+1},\eps_{n}]$ we clearly have $[\tilde{x}_{\eps}]=x\in[C_{\eps}]$,
and hence the inclusion is proved. On the other hand, assuming that
the assumption \eqref{eq:1.2-1} holds, the inclusion $[C_{\eps}]\subseteq\overline{C}$
easily follows. 
\end{enumerate}
\end{rem}

\begin{proof}[Proof of Prop.~\ref{prop:=000020minkowski=000020strongly=000020internal=000020}.]
 By a translation argument we can take $x_{0}=0$. Set
\begin{equation}
\forall\eps\in I\,\forall x\in E:\,\sigma_{\eps}(x):=\inf\{\lambda\in\mathbb{R}_{>0}\mid\lambda^{-1}x\in C_{\varepsilon}\}.\label{eq:sigma_eps}
\end{equation}
First, note that since $C$ is a sharply open set and the representative
$(C_{\eps})$ is sharply bounded, there exists $q_{0}\in\N$ such
that 
\[
\forall^{0}\eps:\,B_{\rho_{\eps}^{q_{0}}}(0)\subseteq C_{\eps}\subseteq B_{\rho_{\eps}^{-q_{0}}}(0).
\]
It follows that 
\begin{equation}
\forall^{0}\eps\,\forall x\in E_{\eps}:\,\rho_{\eps}^{q_{0}}||x||_{\eps}\leq\sigma_{\eps}(x)\leq\rho_{\eps}^{-q_{0}}||x||_{\eps}.\label{eq:sigma_eps2}
\end{equation}
Consider now $[x_{\eps}]\in\mathcal{G}_{E}$. Then, $(\sigma_{\eps}(x_{\eps}))\in\R_{\rho}$
(by \eqref{eq:sigma_eps2}). We now claim that 
\begin{equation}
\forall q\in\N:([\sigma_{\eps}(x_{\eps})]+\diff\rho^{q})^{-1}x\in C.\label{eq:2.50}
\end{equation}
Indeed, set
\[
S_{0}:=\{\eps\in I\mid x_{\eps}=0\},\quad S_{*}:=\{\eps\in I\mid x_{\eps}\neq0\}.
\]
Assume that $S_{0}\subseteq_{0}I$. Then, $[x_{\eps}]=_{S}0$, $[\sigma_{\eps}(x_{\eps})]=_{S}0$
(by \eqref{eq:sigma_eps2}), and 
\begin{equation}
\forall q\in\N:([\sigma_{\eps}(x_{\eps})]+\diff\rho^{q})^{-1}x=_{S}0\in C.\label{eq:Inclusion}
\end{equation}
Assume now that $S_{*}\subseteq_{0}I$. For simplicity of notation
we assume that $S_{*}=I$. Let $(\alpha_{\eps})$ be a non-negative
negligible net. By the definition of the net $(\sigma_{\eps}(x_{\eps}))$,
there exists a positive negligible net $(\beta_{\eps})$ satisfying
\[
\forall\eps:\,\beta_{\eps}\leq\alpha_{\eps}||x_{\eps}||_{\eps}\,\,\,\text{and}\,\,\,z_{\eps}:=(\sigma_{\eps}(x_{\eps})+\beta_{\eps})^{-1}x_{\eps}\in C_{\eps}
\]
Recall from \eqref{eq:sigma_eps2} that $\sigma_{\eps}(x_{\eps})>0$
for all $\eps$, and hence $z_{\eps}$ is well defined. By \eqref{eq:1.2-1}
we have that 
\[
\forall q\in\N\,\exists u_{q}\in C:\,||z-u_{p}||<\diff\rho^{p}
\]
where $z=[z_{\eps}]\in\mathcal{G}_{E}$ (since the representative
$(C_{\eps})$ is sharply bounded). Note that $||z-u_{p}||>0$ for
all $p\in\N$. Indeed, assume that $||z-u_{p}||=_{L}0$ for some $p\in\N$
and for some $L\subseteq_{0}I$, and hence $z\in_{L}C$. Since $C$
is a sharply open set, there exits $r\in\N$ such that $\forall\eps\in L:\,B_{\rho_{\eps}^{r}}(z_{\eps})\subseteq C_{\eps}$,
which implies that $((\sigma_{\eps}(x_{\eps})+\beta_{\eps})^{-1}+\rho_{\eps}^{r+1}||x_{\eps}||_{\eps}^{-1})x_{\eps}\in C_{\eps}$
and hence 
\[
\forall\eps\in L:\,\sigma_{\eps}(x_{\eps})\leq((\sigma_{\eps}(x_{\eps})+\beta_{\eps})^{-1}+\rho_{\eps}^{r+1}||x_{\eps}||_{\eps}^{-1})^{-1}.
\]
By \eqref{eq:sigma_eps2} we have that $\sigma_{\eps}(x_{\eps})>0$
for all $\eps\in L$ and 
\[
\forall\eps\in L:\,\alpha_{\eps}\rho_{\eps}^{-2q_{0}}||x_{\eps}||_{\eps}^{-1}\geq\beta_{\eps}\rho_{\eps}^{-2q_{0}}||x_{\eps}||_{\eps}^{-2}\geq\frac{\beta_{\eps}}{\sigma_{\eps}(x_{\eps})(\sigma_{\eps}(x_{\eps})+\beta_{\eps})}\geq\rho_{\eps}^{r+1}||x_{\eps}||_{\eps}^{-1},
\]
which is not true since the net $(\alpha_{\eps})$ is negligible.
Therefore, $||z-u_{p}||>0$ for every $p\in\N$. Since $0\in C$ and
$C$ is a strongly internal set, there exists $m$ sufficiently large
such that 
\[
\forall p\in\N:\,\frac{\diff\rho^{m}}{||z-u_{p}||}(z-u_{p})\in C.
\]
By convexity of $C$ we have 
\[
\forall p\in\N\,\forall\alpha\in[0,1]:\frac{\diff\rho^{m}}{||z-u_{p}||}(z-u_{p})+\alpha\left(u_{p}-\frac{\diff\rho^{m}}{||z-u_{p}||}(z-u_{p})\right)\in C
\]
Choosing $\alpha=\frac{\diff\rho^{m}}{||z-u_{p}||+\diff\rho^{m}}\in[0,1]$
to get 
\begin{equation}
\frac{\diff\rho^{m}}{||z-u_{p}||+\diff\rho^{m}}z\in C.\label{eq:2.54}
\end{equation}
For any $q\in\N$, we choose $p$ sufficiently large to get 
\begin{equation}
\left[\frac{\sigma_{\eps}(x_{\eps})+\beta_{\eps}}{\sigma_{\eps}(x_{\eps})+\rho_{\eps}^{q}}\right]\leq\frac{\diff\rho^{m}}{||z-u_{p}||+\diff\rho^{m}}.\label{eq:2.55}
\end{equation}
From $0\in C$, \eqref{eq:2.54}, \eqref{eq:2.55} and by convexity
of $C$ we obtain 
\[
\forall q\in\N:\,\left[\frac{\sigma_{\eps}(x_{\eps})+\beta_{\eps}}{\sigma_{\eps}(x_{\eps})+\rho_{\eps}^{q}}\right]z=\left[(\sigma_{\eps}(x_{\eps})+\rho_{\eps}^{q})^{-1}x_{\eps}\right]=([\sigma_{\eps}(x_{\eps})]+\diff\rho^{q})^{-1}x\in C,
\]
which, together with \eqref{eq:Inclusion}, implies \eqref{eq:2.50}.
Therefore, our claim is proved. We prove now that 
\begin{equation}
[x_{\eps}]=[x'_{\eps}]\Longrightarrow[\sigma_{\eps}(x_{\eps})]=[\sigma_{\eps}(x'_{\eps})]=:\sigma(x),\label{eq:infRepresentative}
\end{equation}
which shows that the right hand side of \eqref{eq:g.l.b_internal}
doesn't depend on the representative of $x$. Indeed, assume that
there exist $n\in\N$ and $L\subseteq_{0}I$ such that 
\begin{equation}
\forall\eps\in L:\,\sigma_{\eps}(x'_{\eps})+\rho_{\eps}^{n}\leq\sigma_{\eps}(x{}_{\eps}).\label{eq:2.51}
\end{equation}
By \eqref{eq:2.50} we have $[\sigma_{\eps}(x'_{\eps})+\rho_{\eps}^{n+1}]^{-1}x\in C$.
Since $C$ is a strongly internal set we have 
\[
\forall^{0}\eps:\,(\sigma_{\eps}(x'_{\eps})+\rho_{\eps}^{n+1})^{-1}x_{\eps}\in C_{\eps}
\]
which contradicts \eqref{eq:2.51}. Therefore, the right hand side
of \eqref{eq:g.l.b_internal} does not depend on the representative
of $x$. Finally, \eqref{eq:g.l.b_internal} is a consequence of \eqref{eq:sigma_eps}
and \eqref{eq:2.50}.
\end{proof}
\begin{rem}
The left hand side of \eqref{eq:g.l.b_internal} defines the Minkowski
function of the set $C$ (which is an $\rti$-sublinear functional
as shown in Lem.~\ref{lem:Minkowsky=000020prop}). However, the map
$E_{\eps}\ra\R_{\geq0}$, $x\ra\sigma_{\eps}(x)$ is not an $\R$-sublinear
map since $C_{\eps}$ is not necessary convex. Therefore, assuming
that the representative $(C_{\eps})$ is consisting of convex sets
we get that the Minkowski functional is generated by a family of $\R$-sublinear
maps $(\sigma_{\eps})$. 
\end{rem}

In the next proposition, we prove that property \eqref{eq:1.2-1}
holds if $C_{\eps}$ is convex for all $\eps$ small. 
\begin{prop}
\label{prop:ConvImlies}Let $C\subseteq\mathcal{G}_{E}$ be a non-empty
strongly internal set. Assume that $C$ has a sharply bounded representative
$(C_{\eps})$ consisting of convex subsets. Then, \eqref{eq:1.2-1}
holds. 
\end{prop}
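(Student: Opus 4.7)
The plan is to use convexity of each $C_\eps$ to push any given net $(a_\eps)\in\prod_\eps C_\eps$ an infinitesimal amount toward a deep interior point of $C$, producing an approximant that genuinely lies in the strongly internal set $C$ (not merely in the $\eps$-wise representative) while staying within distance $\diff\rho^q$ of $[a_\eps]$.

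First, I would fix a witness: since $C\neq\emptyset$, pick some $c_0=[c_{0\eps}]\in C$. The characterization of strongly internal sets via Thm.~10(iv) of \cite{GKV24} (already used in the proofs of Thm.~\ref{thm:ConRep} and Prop.~\ref{prop:strong=000020disjoint=000020}) supplies $p_0\in\N$ with $B_{\rho_\eps^{p_0}}(c_{0\eps})\subseteq C_\eps$ for $\eps$ small. Sharp boundedness of the representative $(C_\eps)$ furnishes $N\in\N$ such that $C_\eps\subseteq B_{\rho_\eps^{-N}}(0)$ for $\eps$ small, so in particular $\|c_{0\eps}-a_\eps\|_\eps\le 2\rho_\eps^{-N}$.

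Next, for a prescribed $q\in\N$, I would set $t_\eps:=\rho_\eps^{q+N+1}$ and define
\[
u_{q,\eps}:=(1-t_\eps)\,a_\eps+t_\eps\,c_{0\eps}.
\]
For any $w\in B_{t_\eps\rho_\eps^{p_0}}(0)$ one has
\[
u_{q,\eps}+w=(1-t_\eps)\,a_\eps+t_\eps\bigl(c_{0\eps}+t_\eps^{-1}w\bigr),
\]
a convex combination of $a_\eps\in C_\eps$ and a point of $B_{\rho_\eps^{p_0}}(c_{0\eps})\subseteq C_\eps$; convexity of $C_\eps$ then gives $B_{t_\eps\rho_\eps^{p_0}}(u_{q,\eps})\subseteq C_\eps$, whence $d(u_{q,\eps},C_\eps^c)\geq\rho_\eps^{q+N+1+p_0}$. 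Applying Thm.~10(iv) of \cite{GKV24} in the opposite direction certifies that $u_q:=[u_{q,\eps}]\in C$.

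The norm estimate is then immediate: $u_q-[a_\eps]=[t_\eps(c_{0\eps}-a_\eps)]$, and $\|t_\eps(c_{0\eps}-a_\eps)\|_\eps\le 2\rho_\eps^{q+1}$, yielding $\|u_q-[a_\eps]\|\le\diff\rho^q$. The only delicate step is the balancing of the scale $t_\eps$: it must be small enough that the displacement, once the sharp-boundedness growth $\rho_\eps^{-N}$ is absorbed, is of order at most $\diff\rho^q$, yet large enough that a polynomial-in-$\diff\rho$ lower bound on $d(u_{q,\eps},C_\eps^c)$ survives, which is what Thm.~10(iv) requires to guarantee $u_q\in C$ and not merely $u_{q,\eps}\in C_\eps$ eventually. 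Convexity of $C_\eps$ is precisely what allows a single infinitesimal scale to perform both jobs simultaneously.
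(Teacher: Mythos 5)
Your proof is correct and follows essentially the same strategy as the paper's: both contract $a_\eps$ by an infinitesimal convex combination toward an interior point of $C$ (the paper normalizes $0\in C$ and uses the scale $\rho_\eps^{2q}/(\|a_\eps\|_\eps+\rho_\eps^q)$, while you use a generic witness $c_{0\eps}$ and the fixed scale $\rho_\eps^{q+N+1}$), then invoke convexity to produce a ball of polynomial radius around the perturbed point inside $C_\eps$, certifying membership in the strongly internal set. The quantitative bookkeeping ($t_\eps\|c_{0\eps}-a_\eps\|_\eps\le 2\rho_\eps^{q+1}\le\rho_\eps^q$ for small $\eps$, and moderateness of $(u_{q,\eps})$ via sharp boundedness) all checks out.
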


\begin{proof}
Since $C$ is a non-empty set, we assume for simplicity that $0\in C$.
Let $(C_{\eps})$ be a sharply bounded representative of $C$ made
of convex sets, and let $(a_{\eps})\in\prod_{\eps\in I}C_{\eps}$.
Since $C$ is a sharply open set, there exists $m\in\N$ such that
$B_{\rho_{\eps}^{m}}(0)\subseteq C_{\eps}$ for all $\eps$ small.
Setting 
\[
\forall\eps\,\forall q\in\N:\,a_{\eps q}:=\left(1-\frac{\rho_{\eps}^{2q}}{||a_{\eps}||_{\eps}+\rho_{\eps}^{q}}\right)a_{\eps}.
\]
We claim now that $a_{q}:=[a_{\eps q}]\in C$. First, since the representative
$(C_{\eps})$ is sharply bounded, there exists $r\in\N$ such that
$||a_{\eps q}||_{\eps}\leq||a_{\eps}||_{\eps}\leq\rho_{\eps}^{-r}$
for all $q$ and for all $\eps$ small, which implies in particular
that $(a_{\eps q})\in\mathcal{M}_{E}$. Moreover, we clearly have
\[
\forall q\in\N\,\exists s=m+2q+2r\,\forall^{0}\eps\,\forall z\in B_{\rho_{\eps}^{s}}(0):\,\rho_{\eps}^{-2q}(||a_{\eps}||_{\eps}+\rho_{\eps}^{q})z\in B_{\rho_{\eps}^{m}}(0)\subseteq C_{\eps}.
\]
By convexity of $C_{\eps}$ we have 
\[
\forall q\in\N\,\forall^{0}\eps\,\forall z\in B_{\rho_{\eps}^{s}}(0)\,\forall\lambda\in[0,1]:\,\lambda\frac{||a_{\eps}||_{\eps}+\rho_{\eps}^{q}}{\rho_{\eps}^{2q}}z+(1-\lambda)a_{\eps}\in C_{\eps}.
\]
Choosing $\lambda:=\frac{\rho_{\eps}^{2q}}{||a_{\eps}||_{\eps}+\rho_{\eps}^{q}}\in[0,1]$
we get that $a_{\eps q}+z\in C_{\eps}$ for all $\eps$ small. Hence,
$B_{\rho_{\eps}^{s}}(a_{\eps q})\subseteq C_{\eps}$ for all $\eps$
small. Therefore, $a_{q}\in C$. Moreover, we clearly have $||a_{q}-a||\leq\diff\rho^{q}$
which proves \eqref{eq:1.2-1}. 
\end{proof}
We combine now Prop.~\ref{prop:=000020minkowski=000020strongly=000020internal=000020}
and Prop.~\ref{prop:ConvImlies} to get the following result 
\begin{cor}
\label{cor:4.10=000020ImpMink}Let $C\subseteq\mathcal{G}_{E}$ be
a non-empty strongly internal subset. Assume that the set $C$ has
a sharply bounded representative consisting of convex sets. Then,
the Minkowski functional $p$ of the set $C$ is well defined and
is generated by a net of sublinear maps. 
\end{cor}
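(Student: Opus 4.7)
The plan is to assemble this statement directly from the two preceding propositions plus the remark that the $\eps$-wise Minkowski functional of a convex set is sublinear. No new ingredients should be needed.

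First I would verify that the hypotheses of Prop.~\ref{prop:=000020minkowski=000020strongly=000020internal=000020} are met. Since $C$ is strongly internal, it is sharply open (so any $x_0\in C$ serves as the base point), and the existence of a sharply bounded representative $(C_\eps)$ made of convex subsets puts us exactly in the situation of Prop.~\ref{prop:ConvImlies}, which delivers property \eqref{eq:1.2-1}. I also need that $C$ itself is convex as a subset of $\mathcal{G}_E$: given $x=[x_\eps]$, $y=[y_\eps]\in C$ and $t=[t_\eps]\in[0,1]\subseteq\rti$, one can choose the representatives so that $x_\eps,y_\eps\in C_\eps$ and $t_\eps\in[0,1]$ for $\eps$ small, and the convexity of $C_\eps$ gives $(1-t_\eps)x_\eps+t_\eps y_\eps\in C_\eps$; because $C$ is strongly internal this membership at the $\eps$-level (valid for every representative) upgrades to $(1-t)x+ty\in C$, as in item~\ref{enu:ex13} of Ex.~\ref{exa:example}.

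Having checked the hypotheses, I would apply Prop.~\ref{prop:=000020minkowski=000020strongly=000020internal=000020} to a chosen point $x_0\in C$. It gives that the Minkowski functional $p$ is well defined and, by \eqref{eq:g.l.b_internal},
\[
p([x_\eps])=[\sigma_\eps(x_\eps)],\qquad \sigma_\eps(x):=\inf\{\lambda\in\R_{>0}\mid x_{0\eps}+\lambda^{-1}(x-x_{0\eps})\in C_\eps\}.
\]
To conclude that $p$ is generated by a net of $\R$-sublinear maps, I would invoke the classical theory: for $\eps$ small the proof of Prop.~\ref{prop:=000020minkowski=000020strongly=000020internal=000020} establishes $B_{\rho_\eps^{q_0}}(x_{0\eps})\subseteq C_\eps$, so $x_{0\eps}$ lies in the interior of the convex set $C_\eps$; hence $\sigma_\eps$ is the ordinary Minkowski functional of a convex absorbing set (recentered at $x_{0\eps}$), which is $\R$-sublinear on $E_\eps$. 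Redefining $\sigma_\eps$ to be identically $0$ on the exceptional null set of small $\eps$ (still sublinear) produces a net $(\sigma_\eps)$ of sublinear maps representing $p$.

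There is no real obstacle here; the only point requiring attention is the sublinearity at the $\eps$-level, and this is precisely what fails in the general remark preceding Prop.~\ref{prop:ConvImlies} (where $C_\eps$ need not be convex) and is restored under the convex-representative hypothesis. Hence the corollary follows by chaining Prop.~\ref{prop:ConvImlies} into Prop.~\ref{prop:=000020minkowski=000020strongly=000020internal=000020} and reading off sublinearity from the classical construction.
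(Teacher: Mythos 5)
Your proposal is correct and follows exactly the paper's route: the corollary is stated there as an immediate combination of Prop.~\ref{prop:ConvImlies} (which yields property \eqref{eq:1.2-1} for the convex sharply bounded representative) with Prop.~\ref{prop:=000020minkowski=000020strongly=000020internal=000020}, the sublinearity of the net $(\sigma_{\eps})$ being read off from the convexity of the $C_{\eps}$ exactly as in the remark following that proposition. Your added verification that $C$ itself is convex (a hypothesis of Prop.~\ref{prop:=000020minkowski=000020strongly=000020internal=000020}) is a detail the paper leaves implicit via Example~\ref{exa:example}; the only point to phrase more carefully is that $\sigma_{\eps}$ is sublinear as a map on $E_{\eps}$ after first translating so that $x_{0}=0$, which is the normalization both propositions already perform.
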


We present now an example of a subset of $\rti$ that does not satisfy
property \eqref{eq:existence_glb}.
\begin{example}
The set $D_{\infty}\subseteq\rti$ of all infinitesimal of $\rti$
does not satisfy property \eqref{eq:existence_glb} e.g.~when $x$
is invertible and $x_{0}=0$. Indeed, assume by contradiction that
\begin{equation}
\exists x\in\rti_{>0}:\,\mathrm{\exists g.l.b.}\{\alpha\in\rti_{>0}\mid\alpha^{-1}x\in D_{\infty}\}.\label{eq:2.61}
\end{equation}
Clearly, the set $D_{\infty}$ is convex non-empty, sharply open,
and satisfies \ref{enu:ExtPro} of Thm.~\ref{thm:glb=000020implies=000020inf}.
Thus, by Thm.~\ref{thm:glb=000020implies=000020inf}, there exists
$p(x)\in\rti$ satisfying 
\begin{equation}
\forall\lambda\in\rti_{>0}\text{ with }\lambda^{-1}x\in D_{\infty}:\,0\leq p(x)\leq\lambda\label{eq:3.39}
\end{equation}
\begin{equation}
\forall q\in\mathbb{N}\,\exists\lambda_{q}\in\rti_{>0}\text{ with }\lambda_{q}^{-1}x\in D_{\infty}\text{ and }\lambda_{q}\leq p(x)+\diff\rho^{q}.\label{eq:3.40}
\end{equation}
Using \eqref{eq:3.39} with $\lambda_{q}/2$ and \eqref{eq:3.40}
we get 
\begin{equation}
\forall q\in\N:\,0\leq2p(x)\leq\lambda_{q}\leq p(x)+\diff\rho^{q},
\end{equation}
which implies that $p(x)=0$. On the other hand, since $x$ is invertible,
there exists $s>0$ such that $\diff\rho^{-s}x\geq1$ which leads
to a contradiction. 
\end{example}

The following theorem is the first geometric form of the Hahn-Banach
theorem in $\mathcal{G}_{E}$. 
\begin{thm}
\label{thm:geo=000020form=0000201}Let $A=\langle A_{\eps}\rangle$,
$B=[B_{\eps}]\subseteq\mathcal{G}_{E}$ be two non-empty and strongly
disjoint sets. Assume that the nets $(A_{\eps})$, $(B_{\eps})$ are
sharply bounded and consisting of convex sets. Then, there exists
a continuous $\rti$-linear map $f=[f_{\eps}]\ra\rti$ non-identically
null and $\alpha\in\rti$ such that \textcolor{black}{
\begin{equation}
\forall y\in A\,\forall z\in B:\,f(y)\leq\alpha\leq f(z).\label{eq:3.51}
\end{equation}
i.e.~the closed hyperplane $\{f=\alpha\}$ separates $A$ and $B$.
}Moreover, if $B$ is a cone,~i.e. a set that satisfies $\lambda x\in B$
for all $x\in B,\lambda\in\rti_{\geq0}$, then we can choose $\alpha=0$. 
\end{thm}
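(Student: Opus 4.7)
\emph{Proof plan.} The plan is to mirror the classical geometric Hahn-Banach argument: pass from $\mathcal{G}_{E}$ down to each $E_{\eps}$, apply the classical first separation theorem in each normed space, and reassemble the resulting family $(f_{\eps},\alpha_{\eps})$ into a continuous $\rti$-linear map and a generalized scalar via Prop.~\ref{prop:=000020representative=000020implies=000020continuous=000020} and the norm formula~\eqref{eq:norm}. The starting ingredient is Prop.~\ref{prop:strong=000020disjoint=000020} applied to the sharply bounded convex representative $(A_{\eps})$, producing a representative $(B'_{\eps})$ of $B$ with $A_{\eps}\cap B'_{\eps}=\emptyset$ for $\eps$ small; since the explicit formula $B'_{\eps}=\{x\in E_{\eps}\mid B_{\rho_{\eps}^{q}}(x)\subseteq B_{\eps}\cap B_{\rho_{\eps}^{-q}}(0)\}$ preserves convexity of $B_{\eps}$, the net $(B'_{\eps})$ is itself convex and sharply bounded. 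Moreover, since $A=\langle A_{\eps}\rangle$ is strongly internal and hence sharply open in $\mathcal{G}_{E}$, any $u_{0}\in A$ admits $r\in\rti_{>0}$ with $B_{r}(u_{0})\subseteq A$, which descends at the $\eps$-level to $B_{r_{\eps}}(u_{0\eps})\subseteq A_{\eps}$ for $\eps$ small, so $A_{\eps}$ has nonempty interior.

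Next, for each $\eps$ small I would apply the classical first geometric Hahn-Banach theorem to $A_{\eps}$ and $B'_{\eps}$ inside $E_{\eps}$, producing a continuous $\R$-linear $f_{\eps}\in E_{\eps}'$ normalised to $\|f_{\eps}\|_{E_{\eps}'}=1$ together with $\alpha_{\eps}\in\R$ such that $f_{\eps}(y)\leq\alpha_{\eps}\leq f_{\eps}(z)$ for all $y\in A_{\eps}$, $z\in B'_{\eps}$. Moderation of $(\alpha_{\eps})$ is immediate from $|\alpha_{\eps}|\leq\sup_{y\in A_{\eps}}\|y\|_{\eps}=O(\rho_{\eps}^{-N})$ by sharp boundedness of $(A_{\eps})$, and $f:=[f_{\eps}]$ is a well-defined $\rti$-linear map with representatives (moderation and preservation of negligibility both coming from $|f_{\eps}(u_{\eps})|\leq\|u_{\eps}\|_{\eps}$), continuous by Prop.~\ref{prop:=000020representative=000020implies=000020continuous=000020}, with $|||f|||=[\|f_{\eps}\|_{E_{\eps}'}]=1$, so that $f$ is non-identically null. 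The separation~\eqref{eq:3.51} is then read off on representatives: for $y\in\langle A_{\eps}\rangle$, \emph{every} representative satisfies $y_{\eps}\in A_{\eps}$ for $\eps$ small, yielding $f(y)\leq\alpha$; for $z\in[B'_{\eps}]$, \emph{some} representative satisfies $z_{\eps}\in B'_{\eps}$, yielding $f(z)\geq\alpha$.

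For the cone case, $\lambda z\in B$ for all $\lambda\in\rti_{\geq0}$ gives $\lambda f(z)\geq\alpha$: choosing $\lambda=0$ forces $\alpha\leq0$, while driving $\lambda$ infinitely large forces $f(z)\geq0$ (any cofinal subset on which $f(z)<0$ would contradict boundedness below by $\alpha$), so replacing $\alpha$ by $0$ preserves $f(y)\leq0\leq f(z)$. The main obstacle I anticipate is the $\eps$-wise step: in a general normed space, disjoint bounded convex sets need not admit a continuous linear separator, so the argument hinges on the uniform-in-$\eps$ nonempty interior of $A_{\eps}$ supplied by the sharp openness of the strongly internal set $A$, together with the moderate normalisation $\|f_{\eps}\|_{E_{\eps}'}=1$ needed for $f$ to be non-null and continuous at the $\rti$-level.
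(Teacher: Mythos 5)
Your route is genuinely different from the paper's: you separate $\eps$-wise with the classical geometric Hahn--Banach theorem and reassemble, whereas the paper reduces to separating the single point $0$ from the difference set $C=\langle A_{\eps}-B_{\eps}\rangle$ (via Lem.~\ref{lem:Sum}), builds the Minkowski functional of $C$ (Cor.~\ref{cor:4.10=000020ImpMink} and Lem.~\ref{lem:separation=000020a=000020point}), and then invokes the analytic form Thm.~\ref{thm:HBT}. Your plan is shorter and bypasses the Minkowski machinery entirely; the price is that the quantifier bookkeeping ($\forall$ representatives for $\langle\cdot\rangle$, $\exists$ representative for $[\cdot]$) and the uniform nonempty interior of $A_{\eps}$ must be handled by hand, which you do correctly. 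The moderation of $(\alpha_{\eps})$, the well-definedness and continuity of $f=[f_{\eps}]$ coming from the normalisation $\|f_{\eps}\|_{E_{\eps}'}=1$, and the cone argument are all essentially fine (for the latter, the cleanest closing step is $f(z)\geq\diff\rho^{k}\alpha$ for all $k$ together with sharp closedness of $\rti_{\geq0}$, since ``$f(z)<0$ on a cofinal subset'' is not quite the negation of $f(z)\geq0$).

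There is, however, one genuine error in your first step. In Prop.~\ref{prop:strong=000020disjoint=000020} the internal set is the one whose representative is arbitrary and the \emph{strongly internal} set is the one whose representative gets shrunk; in the present theorem $A=\langle A_{\eps}\rangle$ is strongly internal and $B=[B_{\eps}]$ is internal, so the proposition produces, for the given $(B_{\eps})$, a new representative $(A'_{\eps})$ of $A$ with $A'_{\eps}\cap B_{\eps}=\emptyset$. You apply the shrinking formula to $B_{\eps}$ instead and claim that $(B'_{\eps})$ is a representative of $B$. That is false in general: for an internal set, equality of the generated sets is governed by negligibility of the Hausdorff distance (Thm.~3.8 of \cite{ObVe08}), and the set $\{x\mid B_{\rho_{\eps}^{q}}(x)\subseteq B_{\eps}\cap B_{\rho_{\eps}^{-q}}(0)\}$ is empty whenever $B_{\eps}$ has empty interior --- which a sharply bounded convex set easily can (a single point, or a compact convex set in an infinite-dimensional $E_{\eps}$). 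The fix is simply to swap the roles: shrink $A_{\eps}$ to $A'_{\eps}:=\{x\mid B_{\rho_{\eps}^{q}}(x)\subseteq A_{\eps}\cap B_{\rho_{\eps}^{-q}}(0)\}$, which is convex (an intersection of translates of a convex set), still has nonempty interior for $\eps$ small by the argument you give for $u_{0}\in A=\langle A_{\eps}\rangle$ and Thm.~10 of \cite{GKV24}, and is disjoint from the original $B_{\eps}$; the rest of your argument then goes through with $A'_{\eps}$ against $B_{\eps}$.
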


We first prove the following lemma
\begin{lem}
\textcolor{black}{\label{lem:separation=000020a=000020point}Let $C=\langle C_{\eps}\rangle\subseteq\mathcal{G}_{E}$
be a non-empty set and let $x_{0}=[x_{0\eps}]\in\mathcal{G}_{E}$
such that $\left\{ x_{0}\right\} $ and $C$ are strongly disjoint.
}Assume that $C_{\eps}$ is convex for all $\eps$ small, and that
the representative $(C_{\eps})$ is sharply bounded. Then, there exists
a continuous $\rti$-linear map $f=[f_{\eps}]\ra\rti$ non-identically
null and $\alpha\in\rti$ such that \textcolor{black}{
\[
\forall x\in C:\,f(x)\leq\alpha=f(x_{0}).
\]
i.e.~the closed hyperplane $\{f=\alpha\}$ separates $C$ and $\left\{ x_{0}\right\} $.}
\end{lem}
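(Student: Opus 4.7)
The strategy is to translate $C$ so that it contains $0$, introduce the associated Minkowski functional $p$, and apply Thm.~\ref{thm:HBT} to extend an $\rti$-linear functional initially defined on the one-dimensional $\rti$-submodule spanned by $x_{0}-z_{0}$.

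Concretely, pick $z_{0}=[z_{0\eps}]\in C$ and set $C':=C-z_{0}=\langle C_{\eps}-z_{0\eps}\rangle$, $y_{0}:=x_{0}-z_{0}$. Since strong disjointness is preserved under translation, $\{y_{0}\}$ and $C'$ remain strongly disjoint, and $C'$ has the sharply bounded convex representative $(C_{\eps}-z_{0\eps})$ with $0\in C'$. By Cor.~\ref{cor:4.10=000020ImpMink} the Minkowski functional $p=[p_{\eps}]$ of $C'$ associated with the origin is well-defined and generated by a net of $\R$-sublinear maps $p_{\eps}:E_{\eps}\to\R_{\geq0}$; moreover $p(x)\leq M\|x\|$ for some $M\in\rti_{>0}$ by Lem.~\ref{lem:Minkowsky=000020prop}\ref{enu:mink2}.

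The key step is the estimate $p(y_{0})\geq 1$. Assume by contradiction that this fails; then there exist $q\in\N$ and a co-final $L\subseteq_{0}I$ with $p_{\eps}(y_{0\eps})<1-\rho_{\eps}^{q}$ for $\eps\in L$. Using the representation of $p$ in Prop.~\ref{prop:=000020minkowski=000020strongly=000020internal=000020}, for each such $\eps$ there exists $\lambda_{\eps}\in(0,1-\tfrac{1}{2}\rho_{\eps}^{q})$ with $\lambda_{\eps}^{-1}y_{0\eps}\in C_{\eps}-z_{0\eps}$. Since $C'$ is strongly internal with $0\in C'$, one has $B_{\rho_{\eps}^{r}}(0)\subseteq C_{\eps}-z_{0\eps}$ for some $r\in\N$ and all $\eps$ small; writing any point in $B_{\rho_{\eps}^{r+q+1}}(y_{0\eps})$ as $y_{0\eps}+u$ and decomposing
\[
y_{0\eps}+u=\lambda_{\eps}(\lambda_{\eps}^{-1}y_{0\eps})+(1-\lambda_{\eps})\bigl(u/(1-\lambda_{\eps})\bigr)
\]
as a convex combination of two elements of $C_{\eps}-z_{0\eps}$ yields $B_{\rho_{\eps}^{r+q+1}}(y_{0\eps})\subseteq C_{\eps}-z_{0\eps}$ on $L$. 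Hence $y_{0}\in_{L}C'$ in the strongly internal sense, contradicting the strong disjointness of $\{y_{0}\}$ and $C'$.

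With $p(y_{0})\geq 1$ in hand, consider the internal $\rti$-submodule $\mathcal{F}:=[\R y_{0\eps}]$ (generated by a representative of vector spaces, as required in Sec.~\ref{sec:The-analytic-form}) and define the $\rti$-linear map $g=[g_{\eps}]:\mathcal{F}\to\rti$ $\eps$-wise by $g_{\eps}(ty_{0\eps}):=t\,p_{\eps}(y_{0\eps})$ for $t\in\R$. At each $\eps$ one verifies $g_{\eps}(ty_{0\eps})\leq p_{\eps}(ty_{0\eps})$ (equality for $t\geq 0$; and $\leq 0\leq p_{\eps}$ for $t\leq 0$), so by Prop.~\ref{prop:reduce}\ref{enu:reduce3} the inequality $g\leq p$ holds on $\mathcal{F}$. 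Thm.~\ref{thm:HBT} now produces an $\rti$-linear extension $f=[f_{\eps}]:\mathcal{G}_{E}\to\rti$ satisfying $f\leq p$ everywhere and $f(y_{0})=p(y_{0})\geq 1$. Continuity of $f$ follows from the two-sided bound $|f(x)|\leq M\|x\|$ obtained by applying $f\leq p\leq M\|\cdot\|$ to $\pm x$; non-identical nullity holds because $|f(y_{0})|\geq 1>0$. Setting $\alpha:=f(x_{0})$, for every $x\in C$ one has $x-z_{0}\in C'$, hence $p(x-z_{0})<1\leq p(y_{0})=f(y_{0})$ by Lem.~\ref{lem:Minkowsky=000020prop}\ref{enu:mink3}, giving $f(x)-f(z_{0})\leq p(x-z_{0})\leq f(y_{0})=f(x_{0})-f(z_{0})$ and thus $f(x)\leq\alpha$. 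The main obstacle is precisely the estimate $p(y_{0})\geq 1$: one must promote the qualitative statement that $\{y_{0}\}$ and $C'$ are strongly disjoint to a quantitative bound on the Minkowski functional, which is what the ball construction on $L$ above accomplishes.
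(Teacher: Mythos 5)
Your proof is correct and follows essentially the paper's own route: translate so that the convex set contains the origin, form the Minkowski functional via Cor.~\ref{cor:4.10=000020ImpMink}, bound it below by $1$ at the separated point, extend a linear functional defined on the one-dimensional internal submodule via Thm.~\ref{thm:HBT}, and read off continuity from $f\leq p\leq M||\cdot||$. The only real difference is that your $\eps$-wise ball argument for $p(y_{0})\geq1$ re-derives what the paper obtains in one line from Lem.~\ref{lem:Minkowsky=000020prop}~\ref{enu:mink3} (i.e.\ $C'=\{p<1\}$) together with strong disjointness, and your choice $g_{\eps}(ty_{0\eps})=t\,p_{\eps}(y_{0\eps})$ in place of the paper's $g_{\eps}(tx_{0\eps})=t$ makes the $\eps$-wise domination $g_{\eps}\leq p_{\eps}$ immediate; both are harmless variants.
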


\begin{proof}
\textcolor{black}{By a translation argument we can assume that $0\in C$.
Since $\left\{ x_{0}\right\} $ and $C$ are strongly disjoint, we
have $||x_{0}||>0$. Consider the internal $\rti$-submodule of $\mathcal{G}_{E}$
given by $\mathcal{F}:=\rti x_{0}=[\R x_{0\eps}]$, where $[x_{0\eps}]=x_{0}$.
On $\mathcal{F}$ we define the $\rti$-linear map $g=[g_{\varepsilon}]:\mathcal{F}\longrightarrow\rti$
by 
\[
\forall t=[t_{\eps}]\in\rti:\,g(tx_{0}):=[g_{\eps}(t_{\eps}x_{0\eps})]:=[t_{\eps}]=t.
\]
This means that $g(x_{0})=[g_{\varepsilon}(x_{0\varepsilon})]=1$.
By Cor.~}\ref{cor:4.10=000020ImpMink},\textcolor{black}{{} }the Minkowski
functional $p$ of the set $C$ (associated with $0$) is well defined
and generated by a net $(p_{\eps})$ of $\R$-sublinear maps. \textcolor{black}{We
claim that 
\begin{equation}
\forall x\in\mathcal{F}:\,g(x)\leq p(x).\label{eq:3.47}
\end{equation}
Indeed, since $x_{0}$ and $C$ are strongly disjoint, we have $x\not\in_{L}C$
for all $L\subseteq_{0}I$ which, by Lem.~\ref{lem:Minkowsky=000020prop},
\ref{enu:mink3}, implies that $p(x_{0})\not<_{L}1$ for all $L\subseteq_{0}I$
and hence $p(x_{0})\geq1$. Let now $t\in\rti$. By Lem.~7 of \cite{MTAG21},
we have $t\leq0$ or $t\geq0$ or there exists $L\subseteq_{0}I$
such that $L^{c}\subseteq_{0}I$ and $t\leq_{L}0$ and $t\geq_{L^{c}}0$.
In the case $t\geq0$, there exists a representative $(t_{\eps})$
of $t$ consisting of non-negative representative. It follows that
\[
p(tx_{0})=[p_{\eps}(t_{\eps}x_{0\eps})]=[t_{\eps}p_{\eps}(x_{0\eps})]=tp(x_{0})\ge t=g(tx_{0})
\]
where we used in particular the fact that $[p_{\eps}(t_{\eps}x_{0\eps})]$
does not depend on the representative of $t$, and that $p_{\eps}$
is a sublinear map for all $\eps$. Assume now that $t\leq0$. Then
$p(tx_{0})\geq0\geq t=g(tx_{0})$. The case $t\leq_{L}0$ and $t\geq_{L^{c}}0$
for some $L\subseteq_{0}I$ such that $L^{c}\subseteq_{0}I$ follows
similarly to the last two cases. Thus \eqref{eq:3.47} holds. Thanks
to Thm.~\ref{thm:HBT}, there exists an $\rti$-linear map $f=[f_{\eps}]$
on $\mathcal{G}_{E}$ that extends $g$ and satisfies 
\begin{equation}
\forall x\in\mathcal{G}_{E}:\,f(x)\leq p(x).\label{eq:GeoForHBT1}
\end{equation}
By Lem.~\ref{lem:Minkowsky=000020prop} \ref{enu:mink2}, there exists
$M\in\rti_{>0}$ such that $f(x)\leq p(x)\leq M||x||$ for all $x\in\mathcal{G}_{E}$,
which implies that $f$ is continuous. Moreover, it is non-null because
$f(x_{0})=g(x_{0})=1$. Thus, }by Prop.~\ref{hyperplane} \ref{enu:hyperplane1}\textcolor{black}{,
the hyperplane $\tilde{H}:=\{f=1\}$ is closed. Finally, we use Lem.~\ref{lem:Minkowsky=000020prop}
\ref{enu:mink3} and }\eqref{eq:GeoForHBT1}\textcolor{black}{{} to
conclude.}
\end{proof}
We also need the following lemma
\begin{lem}
\label{lem:Sum}Let $(A_{\eps})$, $(B_{\eps})$ be two nets of $\prod_{\eps\in I}E_{\eps}$
such that the sets $[A_{\eps}]$, $[B_{\eps}]$ and $\langle B_{\eps}\rangle$
are non-empty.
\begin{enumerate}
\item \label{enu:Sum1}Assume that at least one of the nets $(A_{\eps})$,
$(B_{\eps})$ is sharply bounded. Then 
\[
[A_{\eps}]+[B_{\eps}]=[A_{\varepsilon}+B_{\eps}]
\]
\item \label{enu:Sum2}Assume that the nets $(A_{\eps})$, $(B_{\eps})$
are sharply bounded and convex for all $\eps$ small. Then 
\[
[A_{\eps}]+\langle B_{\eps}\rangle:=\langle A_{\eps}+B_{\eps}\rangle
\]
\end{enumerate}
\end{lem}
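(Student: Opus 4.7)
My plan for part (1) is to verify both inclusions at the level of representatives. The forward inclusion $[A_{\eps}]+[B_{\eps}]\subseteq[A_{\eps}+B_{\eps}]$ follows immediately by adding representatives: picking $(u_{\eps})$ with $u_{\eps}\in A_{\eps}$ eventually and $(v_{\eps})$ with $v_{\eps}\in B_{\eps}$ eventually gives $(u_{\eps}+v_{\eps})\in A_{\eps}+B_{\eps}$ eventually. For the reverse inclusion, given $w\in[A_{\eps}+B_{\eps}]$ with a representative $(w_{\eps})$ decomposing as $w_{\eps}=a_{\eps}+b_{\eps}$ with $a_{\eps}\in A_{\eps}$, $b_{\eps}\in B_{\eps}$ for $\eps$ small, I would invoke sharp boundedness of (say) $(A_{\eps})$ to conclude that $(a_{\eps})$ is moderate, whence $(b_{\eps})=(w_{\eps}-a_{\eps})$ is also moderate as a difference of moderates, and $u:=[a_{\eps}]\in[A_{\eps}]$, $v:=[b_{\eps}]\in[B_{\eps}]$ satisfy $u+v=w$.

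For part (2), the easy inclusion $[A_{\eps}]+\langle B_{\eps}\rangle\subseteq\langle A_{\eps}+B_{\eps}\rangle$ uses strong internality: given $u=[u_{\eps}]\in[A_{\eps}]$ with $u_{\eps}\in A_{\eps}$ eventually and $v\in\langle B_{\eps}\rangle$, I would take \emph{any} representative $(w_{\eps})$ of $u+v$ and observe that $(w_{\eps}-u_{\eps})$ is a representative of $v$, hence lies in $B_{\eps}$ eventually, so $w_{\eps}\in A_{\eps}+B_{\eps}$ eventually. Since the representative is arbitrary, $u+v\in\langle A_{\eps}+B_{\eps}\rangle$.

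The real work lies in the reverse inclusion $\langle A_{\eps}+B_{\eps}\rangle\subseteq[A_{\eps}]+\langle B_{\eps}\rangle$. Given $w\in\langle A_{\eps}+B_{\eps}\rangle$, I would first invoke the distance characterization of strongly internal sets (Thm.~10 of \cite{GKV24}) to produce a representative $(w_{\eps})$ of $w$ and $p\in\N$ with $d(w_{\eps},(A_{\eps}+B_{\eps})^{c})\geq\rho_{\eps}^{p}$ eventually; non-emptiness of $\langle B_{\eps}\rangle$ similarly yields $(b_{0\eps})$ and $p_{0}\in\N$ with $d(b_{0\eps},B_{\eps}^{c})\geq\rho_{\eps}^{p_{0}}$. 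Setting $p':=\max(p,p_{0})+1$ and $r_{\eps}:=\rho_{\eps}^{p'}$, the target is a decomposition $w_{\eps}=a_{\eps}+b'_{\eps}$ with $a_{\eps}\in A_{\eps}$ and $d(b'_{\eps},B_{\eps}^{c})\geq\rho_{\eps}^{p'+1}$, whence $[a_{\eps}]\in[A_{\eps}]$, $[b'_{\eps}]\in\langle B_{\eps}\rangle$, and $[a_{\eps}]+[b'_{\eps}]=w$.

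The main obstacle will be the underlying geometric fact: for convex sets $A,B$ in a normed space with $B^{r}:=\{b\in B:\overline{B}_{r}(b)\subseteq B\}$ non-empty, one has $(A+B)^{r}\subseteq\overline{A+B^{r}}$. I would establish this via support functions. Using a point $b_{0\eps}$ at depth strictly greater than $r_{\eps}$ (ensured by $p_{0}<p'$), convex interpolation between $b_{0\eps}$ and near-maximizers of any continuous linear $\ell$ in $B_{\eps}$ yields $h_{B_{\eps}^{r_{\eps}}}(\ell)=h_{B_{\eps}}(\ell)-r_{\eps}\|\ell\|^{*}_{\eps}$; hence $h_{A_{\eps}+B_{\eps}^{r_{\eps}}}(\ell)=h_{A_{\eps}+B_{\eps}}(\ell)-r_{\eps}\|\ell\|^{*}_{\eps}=h_{(A_{\eps}+B_{\eps})^{r_{\eps}}}(\ell)$, so the two convex sets have identical closure. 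From $w_{\eps}\in(A_{\eps}+B_{\eps})^{r_{\eps}}$ I can then choose $z_{\eps}\in A_{\eps}+B_{\eps}^{r_{\eps}}$ with $\|w_{\eps}-z_{\eps}\|_{\eps}\leq\rho_{\eps}^{p'+2}$, decompose $z_{\eps}=a_{\eps}+b_{\eps}$ with $a_{\eps}\in A_{\eps}$ and $b_{\eps}\in B_{\eps}^{r_{\eps}}$, and absorb the defect into the $B$-component by setting $b'_{\eps}:=b_{\eps}+(w_{\eps}-z_{\eps})$. Then $d(b'_{\eps},B_{\eps}^{c})\geq r_{\eps}-\rho_{\eps}^{p'+2}\geq\rho_{\eps}^{p'+1}$ for $\eps$ small, and sharp boundedness of $(A_{\eps})$, $(B_{\eps})$ ensures moderateness of $(a_{\eps})$ and $(b'_{\eps})$.
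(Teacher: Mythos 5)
Your part (1) and the inclusion $[A_{\eps}]+\langle B_{\eps}\rangle\subseteq\langle A_{\eps}+B_{\eps}\rangle$ in part (2) are correct and essentially what the paper does (it dismisses these as trivial). The problem lies in the reverse inclusion of part (2): your ``underlying geometric fact'' is false as stated. The support-function identity $h_{B^{r}}(\ell)=h_{B}(\ell)-r\|\ell\|^{*}$ fails for a general convex body: take $B=[0,10]\times[0,1]\subseteq\R^{2}$ with the Euclidean norm, $r=2/5$ and $\ell=(1,1)/\sqrt{2}$; then $B^{r}=[2/5,48/5]\times[2/5,3/5]$, so $h_{B^{r}}(\ell)=10.2/\sqrt{2}\approx 7.21$, whereas $h_{B}(\ell)-r\|\ell\|^{*}=11/\sqrt{2}-0.4\approx 7.38$. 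Your interpolation argument only yields $h_{B^{r}}(\ell)\geq h_{B}(\ell)-\frac{r}{d_{0}}\,\mathrm{diam}(B)\,\|\ell\|^{*}$, where $d_{0}$ is the depth of the chosen interior point, and the factor $\mathrm{diam}(B)/d_{0}$ cannot be removed. Consequently the claimed inclusion $(A+B)^{r}\subseteq\overline{A+B^{r}}$ with the \emph{same} $r$ is also false: for $A=\overline{B}_{1}(0)\subseteq\R^{2}$, $B=[-1,1]^{2}$, $r=1/2$, one computes $(A+B)^{r}=\overline{B}_{1/2}(0)+[-1,1]^{2}$ and $A+B^{r}=\overline{B}_{1}(0)+[-1/2,1/2]^{2}$, and the point $(1,1)+\tfrac{1}{2\sqrt{2}}(1,1)$ lies in the former but not in the latter. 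So the step ``the two convex sets have identical closure'' would fail.

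The gap is repairable: all you actually need is $(A_{\eps}+B_{\eps})^{r_{\eps}}\subseteq\overline{A_{\eps}+B_{\eps}^{r'_{\eps}}}$ for \emph{some} invertible-scale $r'_{\eps}$, and the corrected estimate gives this with $r'_{\eps}\sim r_{\eps}\rho_{\eps}^{p_{0}}/\mathrm{diam}(B_{\eps})\geq\rho_{\eps}^{p'+p_{0}+N}$ (sharp boundedness bounds $\mathrm{diam}(B_{\eps})$ by $\rho_{\eps}^{-N}$), which is all your final absorption step requires. For comparison, the paper avoids duality and approximation altogether: it perturbs $x_{\eps}=y_{\eps}+z_{\eps}$ to $\tilde{x}_{\eps}:=x_{\eps}+\frac{\rho_{\eps}^{s+1}}{\|a_{\eps}-z_{\eps}\|_{\eps}}(z_{\eps}-a_{\eps})$, which stays in $A_{\eps}+B_{\eps}$ and hence decomposes as $\tilde{y}_{\eps}+w_{\eps}$, and then takes the convex combination of the two decompositions with weight $\lambda_{\eps}=\frac{\|a_{\eps}-z_{\eps}\|_{\eps}}{\|a_{\eps}-z_{\eps}\|_{\eps}+\rho_{\eps}^{s+1}}$; this produces an exact splitting $x_{\eps}=y_{\lambda_{\eps}\eps}+\tilde{z}_{\eps}$ with $y_{\lambda_{\eps}\eps}\in A_{\eps}$ by convexity of $A_{\eps}$, and $\tilde{z}_{\eps}=a_{\eps}+\lambda_{\eps}(w_{\eps}-a_{\eps})$ at invertible-scale depth in $B_{\eps}$ by convexity of $B_{\eps}$. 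If you correct the radius loss as indicated, your route also works, but the paper's construction is shorter and needs no closure or separation argument.
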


\begin{proof}
The assertion \ref{enu:Sum1} is trivial and the sharp boundedness
assumption is needed only for the opposite inclusion $\supseteq$.
We prove now \ref{enu:Sum2}. Indeed, the direct inclusion is trivial,
and we prove only the inclusion $\langle A_{\eps}+B_{\eps}\rangle\subseteq[A_{\eps}]+\langle B_{\eps}\rangle$.
Let $x=[x_{\eps}]\in\langle A_{\eps}+B_{\eps}\rangle$. Then, by Thm.~10,
\emph{(ii)} of \cite{GKV24} 
\begin{equation}
\exists s\in\N\,\forall^{0}\eps:\,B_{\rho_{\eps}^{s}}(x_{\eps})\subseteq A_{\eps}+B_{\eps}\label{eq:4.44}
\end{equation}
We have in particular $x_{\eps}=y_{\eps}+z_{\eps}$ where $y_{\eps}\in A_{\eps}$
and $z_{\eps}\in B_{\eps}$. Since $B$ is a non-empty and strongly
internal set, there exists $a=[a_{\eps}]\in B$ and $m\in\N$ such
that $B_{\rho_{\eps}^{m}}(a_{\eps})\subseteq B_{\eps}$ for all $\eps$
small. Hence, up to choosing $s$ sufficiently large, the set $\{b\in B_{\eps}\mid d(b,B_{\eps}^{c})\geq\rho_{\eps}^{m+1}\}$
is non-empty for all $\eps$ small. Set $S:=\{\eps\in I\mid d(z_{\eps},B_{\eps}^{c})\geq\rho_{\eps}^{m+1}\}$.
In case $S^{c}\not\subseteq_{0}I$, we have that $z=[z_{\eps}]\in B$
which proves that $x=y+z$, with $y:=[y_{\eps}]\in A$ and $z\in B$
which proves the required inclusion. Assume now that $S^{c}\subseteq_{0}I$.
Note that $||z_{\eps}-a_{\eps}||_{\eps}>0$ for all $\eps\in S^{c}$.
From \eqref{eq:4.44} we have that 
\[
\forall\eps\in S^{c}:\,\tilde{x}_{\eps}:=x_{\eps}+\frac{\rho_{\eps}^{s+1}}{||a_{\eps}-z_{\eps}||_{\eps}}(z_{\eps}-a_{\eps})\in A_{\eps}+B_{\eps}.
\]
There exists then for all $\eps$ small, $\tilde{y}_{\eps}\in A_{\eps}$,
$w_{\eps}\in B_{\eps}$ such that $\tilde{x}_{\eps}:=\tilde{y}_{\eps}+w_{\eps}$
for all $\eps$ small. Since $A_{\eps}$ is convex, 
\[
\forall\eps\in S^{c}\,\forall\lambda\in[0,1]:\,y_{\lambda\eps}:=y_{\eps}+\lambda(\tilde{y}_{\eps}-y_{\eps})\in A_{\eps}.
\]
For all $\eps\in S^{c}$ we have 
\begin{alignat*}{1}
x_{\eps}-y_{\lambda\eps} & =z_{\eps}-\lambda\left(\frac{\rho_{\eps}^{s+1}}{||a_{\eps}-z_{\eps}||_{\eps}}(z_{\eps}-a_{\eps})-w_{\eps}+z_{\eps}\right)\\
 & =a_{\eps}+\lambda(w_{\eps}-a_{\eps})+\left(1-\lambda\frac{\rho_{\eps}^{s+1}+||a_{\eps}-z_{\eps}||_{\eps}}{||a_{\eps}-z_{\eps}||_{\eps}}\right)(z_{\eps}-a_{\eps}).
\end{alignat*}
Choosing $\lambda=\frac{||a_{\eps}-z_{\eps}||_{\eps}}{||a_{\eps}-z_{\eps}||_{\eps}+\rho_{\eps}^{s+1}}=:\lambda_{\eps}\in[0,1]$
we obtain 
\[
\forall\eps\in S^{c}:\,x_{\eps}-y_{\lambda_{\eps}\eps}=a_{\eps}+\frac{||a_{\eps}-z_{\eps}||_{\eps}}{||a_{\eps}-z_{\eps}||_{\eps}+\rho_{\eps}^{s+1}}(w_{\eps}-a_{\eps})=:\tilde{z}_{\eps}.
\]
One can easily show that $[\lambda_{\eps}]|_{S^{c}}\in[0,1)|_{S^{c}}$.
Moreover, the net $(w_{\eps})$ is sharply bounded (since $(B_{\eps})$
is sharply bounded). Hence, one can show, as in the proof of Prop.~\ref{prop:ConvImlies},
that $[\tilde{z}_{\eps}]_{S^{c}}\in B|_{S^{c}}$. On the other hand
$[y_{\lambda_{\eps}\eps}]|_{S^{c}}\in A|_{S^{c}}$ since the net $(A_{\eps})$
is sharply bounded. Therefore, $x=y+z$, with $y:=e_{S}[y_{\eps}]+e_{S^{c}}[y_{\lambda_{\eps}\eps}]\in A$
and $z=e_{S}[z_{\eps}]+e_{S^{c}}[\tilde{z}_{\eps}]\in B$, which completes
the proof.
\end{proof}
\begin{proof}[Proof of Thm.~\ref{thm:geo=000020form=0000201}]
\textcolor{black}{{} Setting $C:=\langle A_{\varepsilon}-B_{\varepsilon}\rangle$.
By assumptions, $C$ is non-empty and has a sharply bounded representative
consisting of convex sets. Moreover, by Lem.~}\ref{lem:Sum}\textcolor{black}{,
we have that $C=A-B$, which implies that that $C$ and $\left\{ 0\right\} $are
strongly disjoint. Hence, by Lem.~\ref{lem:separation=000020a=000020point},
there exists a }continuous\textcolor{black}{{} $\rti$-}linear\textcolor{black}{{}
}map\textcolor{black}{{} $f=[f_{\eps}]:\mathcal{G}_{E}\longrightarrow\rti$}
non-identically null \textcolor{black}{that separates $C$ and $\left\{ 0\right\} $.
More precisely, we have $f(x)\leq0$ for every $x\in C$ which implies
that 
\begin{equation}
\forall y\in A\,\forall z\in B:f(y)\leq f(z).\label{eq:3.50}
\end{equation}
One can easily show that $f\left([B_{\varepsilon}]\right)=[f_{\varepsilon}(B_{\varepsilon})]$.
Indeed, the inclusion $\subseteq$ holds in general, whereas the other
one holds provided the representative $(B_{\eps})$ is sharply bounded.
Thus, one can easily show, using the fact that the net $(B_{\eps})$
is sharply bounded and $f_{\eps}$ is a continuous linear map that
the $\inf f(B)$ exists and it is equal to $[\inf(f_{\eps}(B_{\eps}))]=:\alpha$.
We prove now }\eqref{eq:3.51}.\textcolor{black}{{} Indeed, if we assume
that 
\[
\exists\overline{y}\in A\,\exists q\in\N\,\exists L\subseteq_{0}I:\,\alpha+\diff\rho^{q}<_{L}f(\overline{y}).
\]
By definition of the infimum, there exists $\overline{z}\in B$ such
that $f(\overline{z})\leq\alpha+\diff\rho^{q+1}$. It follows that
\[
f(\overline{z})\leq\alpha+\diff\rho^{q+1}<\alpha+\diff\rho^{q}\leq_{L}f(\overline{y})
\]
which contradicts }\eqref{eq:3.50}\textcolor{black}{. Therefore,
our claim holds and hence the first part of the theorem is proved.}

Assume now that $B$ is a cone. Let $z\in B$ and $\lambda\in\rti_{>0}$.
We have $f(\lambda z)=\lambda f(z)\geq\alpha$ and hence $f(z)\geq\lambda^{-1}\alpha$.
Since the latter holds for all $z\in B$ and for all $\lambda\in\rti_{>0}$
the proof is completed.
\end{proof}
\begin{cor}
Let $A=\langle A_{\varepsilon}\rangle$ and $B=[B_{1\varepsilon}]\cup[B_{2\varepsilon}]$
be two non-empty subsets of $\mathcal{G}_{E}$, and let $\tilde{H}=\{f=\alpha\}$
be a hyperplane. Then, $\tilde{H}$ separates $A$ and $B$ if and
only if it separates $A$ and $B':=[(B_{1\varepsilon}\cup B_{2\varepsilon})_{\varepsilon}]$.
\end{cor}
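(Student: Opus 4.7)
My plan is that the $(\Leftarrow)$ direction is immediate: every representative of an element of $[B_{i\eps}]$ eventually lies in $B_{i\eps}\subseteq B_{1\eps}\cup B_{2\eps}$, so $B\subseteq B'$, and any hyperplane separating $A$ and $B'$ automatically separates $A$ and $B$.

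For the non-trivial direction $(\Rightarrow)$, I would assume $\tilde H=\{f=\alpha\}$ separates $A$ and $B$. The condition $f(x)\le\alpha$ on $A$ does not involve $B$, so the task is to upgrade $f(x)\ge\alpha$ from $B$ to $B'$. The key idea is to decompose an arbitrary $x\in B'$ as a generalized convex combination using the idempotent numbers $e_S$: given a representative with $x_\eps\in B_{1\eps}\cup B_{2\eps}$ for $\eps$ small, set $L_1:=\{\eps\mid x_\eps\in B_{1\eps}\}$ and $L_2$ its complement (on which $x_\eps\in B_{2\eps}$ automatically), and then write $x=e_{L_1}v+e_{L_2}w$ with $v\in[B_{1\eps}]\subseteq B$ and $w\in[B_{2\eps}]\subseteq B$. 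Since $L_1\sqcup L_2$ exhausts all small $\eps$, one has $e_{L_1}+e_{L_2}=1$, and $\rti$-linearity yields $f(x)=e_{L_1}f(v)+e_{L_2}f(w)\ge e_{L_1}\alpha+e_{L_2}\alpha=\alpha$, which is exactly what is needed.

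To build $v$ and $w$ concretely, I would fix reference elements $a=[a_\eps]\in[B_{1\eps}]$ and $b=[b_\eps]\in[B_{2\eps}]$ with $a_\eps\in B_{1\eps}$, $b_\eps\in B_{2\eps}$ for small $\eps$, and patch: $v_\eps:=x_\eps$ on $L_1$ and $v_\eps:=a_\eps$ on $L_2$, symmetrically for $w$. Both patched nets are moderate (concatenations of moderate nets) and lie eventually in the prescribed $B_{i\eps}$, so they represent elements of $[B_{1\eps}]$ and $[B_{2\eps}]$; the identity $x=e_{L_1}v+e_{L_2}w$ then holds $\eps$-wise by construction.

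The one point where I anticipate needing care is the degenerate case in which one of $[B_{1\eps}]$, $[B_{2\eps}]$ is empty, so the reference element $a$ or $b$ is unavailable. In that situation one can argue directly that the empty internal set forces either $B_{i\eps}=\emptyset$ for small $\eps$, or that no moderate net meets it eventually; both subcases reduce $B'$ to the non-empty $[B_{j\eps}]=B$ and make the claim trivial. This edge case is where the only subtle check is required; the rest of the argument is a clean manipulation with the characteristic generalized numbers $e_{L_1}$, $e_{L_2}$ and the $\rti$-linearity of $f$.
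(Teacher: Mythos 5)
Your main argument is correct and is essentially the paper's own proof: the paper likewise sets $L_{1}=\{\eps\mid z_{\eps}\in B_{1\eps}\}$, $L_{2}=\{\eps\mid z_{\eps}\in B_{2\eps}\}$, patches $z$ with fixed elements $x_{i}\in[B_{i\eps}]$ to form $y_{i}=e_{L_{i}}z+e_{L_{i}^{c}}x_{i}\in B$, and multiplies the inequalities $\alpha\leq f(y_{i})$ by $e_{L_{i}}$ before adding --- the same manipulation you phrase as $x=e_{L_{1}}v+e_{L_{2}}w$ plus $\rti$-linearity. The $(\Leftarrow)$ direction via $B\subseteq B'$ is also identical.

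The one point where you go beyond the paper --- the degenerate case $[B_{2\eps}]=\emptyset$ --- is exactly where your argument breaks, and the proposed fix is wrong: $[B_{2\eps}]=\emptyset$ does \emph{not} force $B'=[B_{1\eps}]$. Take $E_{\eps}=\R$, $L\subseteq_{0}I$ with $L^{c}\subseteq_{0}I$, $B_{1\eps}=\{0\}$ for all $\eps$, and $B_{2\eps}=\{1\}$ for $\eps\in L$, $B_{2\eps}=\{\rho_{\eps}^{-1/\eps}\}$ for $\eps\in L^{c}$. Any net eventually in $B_{2\eps}$ must equal the non-moderate value $\rho_{\eps}^{-1/\eps}$ on the co-final set $L^{c}$, so $[B_{2\eps}]=\emptyset$ and $B=\{0\}$; yet the bounded net equal to $1$ on $L$ and $0$ on $L^{c}$ lies in $B_{1\eps}\cup B_{2\eps}$ for every $\eps$, so $e_{L}\in B'\setminus B$. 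Worse, the conclusion itself fails in this situation: with $f(x)=-2x$, $\alpha=-1$ and $A=\langle(3/4,5/4)\rangle$, the hyperplane $\{f=\alpha\}$ separates $A$ and $B=\{0\}$ (since $x\geq3/4$ on $A$ gives $f(x)\leq-3/2$, and $f(0)=0\geq-1$), but $f(e_{L})=-2e_{L}\geq-1$ fails on $L$, so it does not separate $A$ and $B'$. The statement must therefore be read with both $[B_{1\eps}]$ and $[B_{2\eps}]$ non-empty --- which is precisely what the paper's proof silently uses when it picks $x_{1}\in[B_{1\eps}]$ and $x_{2}\in[B_{2\eps}]$. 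Under that hypothesis your main argument is complete and the edge case you worried about does not arise; as written, though, your treatment of it is not a proof but a false claim.
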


\begin{proof}
The condition is sufficient since $B\subseteq B'$. Assume now that
the hyperplane $\{f=\alpha\}$ separates $A$ and $B$, that is 
\begin{equation}
\forall x\in A\,\forall y\in B:\,f(x)\leq\alpha\leq f(y).
\end{equation}
Let $z\in B'$ and let $(z_{\eps})$ be a representative of $z$ that
satisfies $\forall^{0}\varepsilon:\,z_{\varepsilon}\in B_{1\varepsilon}\cup B_{2\varepsilon}$.
Set $L_{1}=\{\varepsilon\in I:\,z_{\varepsilon}\in B_{1\varepsilon}\}$,
$L_{2}=\{\varepsilon\in I:z_{\varepsilon}\in B_{2\varepsilon}\}$,
and let $x_{1}\in[B_{1\varepsilon}]$, $x_{2}\in[B_{2\varepsilon}]$.
Set $y_{1}=e_{L_{1}}z+e_{L_{1}^{c}}x_{1}$ and $y_{2}=e_{L_{2}}z+e_{L_{2}^{c}}x_{2}$.
Since $y_{1}$, $y_{2}\in B$ we have 
\[
\alpha\leq f(y_{1})\Longrightarrow e_{L_{1}}\alpha\leq e_{L_{1}}f(y_{1})=e_{L_{1}}f(z)
\]
 and 
\[
\alpha\leq f(y_{2})\Longrightarrow e_{L_{2}}\alpha\leq e_{L_{2}}f(y_{2})=e_{L_{2}}f(z)
\]
 which gives $\alpha\leq f(z)$. 
\end{proof}
The following proposition is essential in the proof of the second
geometric form of the Hahn-Banach theorem (Thm.~\ref{thm:2nd=000020form}).
\begin{prop}
\label{prop:=000020strong=000020disj=000020compact=000020}Let $A$,
$C\subseteq\mathcal{G}_{E}$ be two non-empty internal sets, where
$A$ is a functionally compact set. \textcolor{black}{Assume that
the sets $A$ and $C$ are }strongly disjoint. Then there exists $m\in\mathbb{N}$
such that the sets $B:=[A_{\varepsilon}+B_{\rho_{\varepsilon}^{m}}(0)]$
and $D:=\langle C_{\varepsilon}+B_{\rho_{\varepsilon}^{m}}(0)\rangle$
are strongly disjoint. 
\end{prop}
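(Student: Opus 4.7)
The plan is to convert the abstract strong disjointness of $A$ and $C$ into a quantitative $\eps$-wise distance bound on their defining nets and then propagate it through the thickening by $B_{\rho_\eps^m}(0)$. Fix the sharply bounded representative $(A_\eps)$ of $A$ consisting of compact sets (which exists by functional compactness) and the representative $(C_\eps)$ of $C$ used to define $D$. Since $A$ and $C$ are non-empty, $A_\eps$ and $C_\eps$ are non-empty $\forall^0\eps$, so $d(A_\eps,C_\eps)$ is well defined. The key step will be to show
\[
\exists N\in\N\,\forall^0\eps:\ d(A_\eps,C_\eps)\geq\rho_\eps^N.
\]

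I would establish this claim by contradiction via a diagonal argument. If it fails, one extracts a decreasing sequence $\eps_k\downarrow 0$ together with $a_k\in A_{\eps_k}$ and $c_k\in C_{\eps_k}$ satisfying $||a_k-c_k||_{\eps_k}<\rho_{\eps_k}^k$. Extend $(a_k)$ and $(c_k)$ to nets $(a_\eps),(c_\eps)$ defined on all of $I$, filling in the remaining indices with fixed moderate representatives of some given elements of $A$ and $C$; sharp boundedness of $(A_\eps)$ controls $||a_k||_{\eps_k}$, and the triangle inequality $||c_k||_{\eps_k}\leq ||a_k||_{\eps_k}+\rho_{\eps_k}^k$ then bounds $||c_k||_{\eps_k}$, so both nets are moderate. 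Setting $L:=\{\eps_k\mid k\in\N\}\subseteq_0 I$, the classes $[a_\eps]|_L\in A|_L$ and $[c_\eps]|_L\in C|_L$ coincide because their difference is negligible on $L$, contradicting the strong disjointness of $A$ and $C$.

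With the claim in hand, I choose $m:=N+1$, so that $2\rho_\eps^m<\rho_\eps^N$ for $\eps$ small. Then for any $y\in A_\eps+B_{\rho_\eps^m}(0)$ and any $z\in C_\eps+B_{\rho_\eps^m}(0)$,
\[
||y-z||_\eps\geq d(A_\eps,C_\eps)-2\rho_\eps^m>0,
\]
so the two thickened nets are disjoint $\forall^0\eps$. To conclude strong disjointness of $B$ and $D$, fix $L\subseteq_0 I$ and suppose, for contradiction, that $x\in B|_L\cap D|_L$. From $x\in B|_L$ one obtains a representative $(b_\eps)$ of $x$ with $b_\eps\in A_\eps+B_{\rho_\eps^m}(0)$ for $\eps\in L$ small; since $D$ is strongly internal and $x\in D|_L$, this same representative $(b_\eps)$ must also satisfy $b_\eps\in C_\eps+B_{\rho_\eps^m}(0)$ for $\eps\in L$ small, contradicting the $\eps$-wise disjointness just established.

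The main obstacle is the diagonal construction in the key step: one has to ensure that the extracted sequences extend to moderate nets so that they define bona fide elements of $A|_L$ and $C|_L$, and this is precisely where the sharp boundedness built into functional compactness of $A$ is indispensable. The fact that $D$ is strongly internal rather than merely internal is equally crucial in the last step, since only strongly internal membership propagates from one representative of $x$ to every other representative.
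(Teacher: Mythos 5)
Your proof is correct and follows the same overall strategy as the paper: reduce strong disjointness to the quantitative bound $d(A_\eps,C_\eps)\geq\rho_\eps^{N}$ for $\eps$ small, pick $m$ so that $2\rho_\eps^{m}<\rho_\eps^{N}$, deduce $\eps$-wise disjointness of the thickened nets, and then transfer this back to strong disjointness of $B$ and $D$ exactly as in the sufficiency direction of the proposition characterizing strong disjointness (Prop.~\ref{prop:strong=000020disjoint=000020}). The one genuine difference is in how the key lower bound is obtained: the paper uses compactness of each $A_\eps$ to produce an exact minimizer $x_\eps\in A_\eps$ with $d(x_\eps,C_\eps)=\inf_{x\in A_\eps}d(x,C_\eps)$, forms $x=[x_\eps]\in A$, and invokes the membership criterion of Thm.~10 of \cite{GKV24} to conclude $x\in_L C$ when the net of distances is not invertible; you instead take approximate minimizers $a_k\in A_{\eps_k}$, $c_k\in C_{\eps_k}$ along a diagonal sequence and observe that $[a_\eps]|_L=[c_\eps]|_L$ lies in $A|_L\cap C|_L$. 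Your variant is slightly more elementary and self-contained, and it shows that only the sharp boundedness of the representative $(A_\eps)$ (not the compactness of its members) is used at this step; the paper's variant keeps the argument aligned with the cited characterization of internal membership. Both are valid, and your handling of the moderateness of the extended nets and of the negligibility of $(\rho_{\eps_k}^{k})$ on $L$ is exactly the care the diagonal argument requires.
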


\begin{proof}
First, since $A$ and $C$ are non-empty, there exist $a=[a_{\eps}]\in A$
and $c=[c_{\eps}]\in C$ such that 
\[
||a_{\eps}||_{\eps}+||c_{\eps}||_{\eps}\geq||a_{\eps}-c_{\eps}||_{\eps}\geq d(x,C_{\eps})\geq\inf_{x\in A_{\varepsilon}}d(x,C_{\varepsilon})
\]
which shows in particular that the net $(\inf_{x\in A_{\varepsilon}}d(x,C_{\varepsilon}))$
is $\rho$-moderate. We claim that 
\begin{equation}
\exists q\in\mathbb{N}:\,d(A,C)=\left[\inf_{x\in A_{\varepsilon}}d(x,C_{\varepsilon})\right]\geq\diff\rho^{q}.\label{eq:3.56}
\end{equation}
Indeed, for every $\varepsilon$, the set $A_{\varepsilon}$ is a
compact set of $E_{\eps}$ and the map $d(\cdot,C_{\varepsilon}):A_{\varepsilon}\longrightarrow\R$
is continuous. Thus, there exists a net $(x_{\varepsilon})$ of $\prod_{\eps\in I}A_{\eps}$
such that $\inf_{x\in A_{\varepsilon}}d(x,C_{\varepsilon})=d(x_{\varepsilon},C_{\varepsilon}).$
Since $A$ is sharply bounded, the net $(x_{\eps})\in\mathcal{M}_{E}$
and hence $x:=[x_{\varepsilon}]\in A$. If the net $(d(x_{\varepsilon},C_{\varepsilon}))$
is not invertible then $x\in_{L}C$ for some $L\subseteq_{0}I$ (see
Thm.~10, \emph{(i)} of \cite{GKV24}) which contradicts the fact
that $A$ and $C$ are strongly disjoint. Thus, \eqref{eq:3.56} follows.
One can easily show that \eqref{eq:3.56} gives 
\begin{equation}
\forall m\in\N_{>q}\,\forall^{0}\varepsilon:\,\left(A_{\varepsilon}+B_{\rho_{\varepsilon}^{m}}(0)\right)\cap\left(C_{\varepsilon}+B_{\rho_{\varepsilon}^{m}}(0)\right)=\emptyset.\label{eq:3.54}
\end{equation}
Indeed, if we assume that there exist $m>q$, a sequence $(\eps_{k})$
non-increasing and converging to $0$, and a sequence $(x_{\eps_{k}})$
such that 
\[
\forall k\in\N:\,x_{\eps_{k}}\in\left(A_{\eps_{k}}+B_{\rho_{\eps_{k}}^{m}}(0)\right)\cap\left(C_{\eps_{k}}+B_{\rho_{\eps_{k}}^{m}}(0)\right)
\]
then 
\[
\forall k\in\N:\,d(A_{\eps_{k}},C_{\eps_{k}})\leq d(x_{\eps_{k}},A_{\eps_{k}})+d(x_{\eps_{k}},C_{\eps_{k}})\leq2\rho_{\eps}^{m}
\]
which contradicts \eqref{eq:3.56}. Finally, \eqref{eq:3.54} proves
that the sets $B$ and $D$ are strongly disjoint (see the proof of
Prop.~\ref{prop:strong=000020disjoint=000020}). 
\end{proof}
\textcolor{black}{The following theorem is the second geometric form
of the Hahn-Banach theorem. }
\begin{thm}
\label{thm:2nd=000020form}Let $A$, $C\subseteq\mathcal{G}_{E}$
be two non-empty internal sets, with $A$ is a functionally compact
set. \textcolor{black}{Assume that the sets $A$ and $C$ are }strongly
disjoint, and have sharply bounded representatives consisting of convex
sets\textcolor{black}{. }Then, there exists a continuous $\rti$-linear
map $f=[f_{\eps}]\ra\rti$ non-identically null, $\alpha\in\rti$,
and $r\in\rti_{>0}$ such that \textcolor{black}{
\[
\forall x\in A\,\forall y\in C:\,f(x)+r\leq\alpha\leq f(y)-r.
\]
i.e.~the closed hyperplane $\{f=\alpha\}$ strictly separates $A$
and $C$. }
\end{thm}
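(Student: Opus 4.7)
The plan is to reduce to the first geometric form of the Hahn-Banach theorem (Thm.~\ref{thm:geo=000020form=0000201}) by a standard fattening argument and then to extract the strict separation gap from the fattening radius. Let $(A_\eps)$ and $(C_\eps)$ be sharply bounded representatives of $A$ and $C$ consisting of convex sets. I would first invoke Prop.~\ref{prop:=000020strong=000020disj=000020compact=000020} to produce $m \in \N$ such that the internal set $\tilde B := [A_\eps + B_{\rho_\eps^m}(0)]$ and the strongly internal set $\tilde D := \langle C_\eps + B_{\rho_\eps^m}(0) \rangle$ are strongly disjoint; their exhibited representatives remain sharply bounded (the fattening ball has negligible radius) and are sums of convex sets, hence convex. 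Applying Thm.~\ref{thm:geo=000020form=0000201} to the pair $(\tilde D, \tilde B)$ and, if necessary, flipping the sign of the resulting $f$ and $\alpha$ so that $A$ ends up on the left, I obtain a continuous, non-identically null $\rti$-linear map $f = [f_\eps]:\mathcal{G}_E \ra \rti$ and $\alpha \in \rti$ with $f(z) \leq \alpha \leq f(y)$ for all $z \in \tilde B$ and $y \in \tilde D$.

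Next, I would prove the fattening inclusions: for $x \in A$, $y \in C$ and $b \in \mathcal{G}_E$ with $||b|| \leq \diff\rho^{m+1}$, both $x + b \in \tilde B$ and $y + b \in \tilde D$ hold. Since $\tilde B$ is internal of form $[\cdot]$, it suffices to exhibit one representative of $x+b$ eventually in $A_\eps + B_{\rho_\eps^m}(0)$: take a representative $(x_\eps)$ of $x$ with $x_\eps \in A_\eps$ for $\eps$ small together with any representative $(b_\eps)$ of $b$, and use that $||b|| \leq \diff\rho^{m+1}$ forces $||b_\eps||_\eps < \rho_\eps^m$ for $\eps$ small (the strict drop in the exponent absorbs negligible perturbations arising from the ambiguity of representatives). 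For $\tilde D$, being strongly internal of form $\langle \cdot \rangle$, every representative of $y+b$ must lie eventually in $C_\eps + B_{\rho_\eps^m}(0)$; writing such a representative as $(v_\eps + c_\eps)$ with $(v_\eps)$ a fixed representative of $y$ eventually in $C_\eps$, the same norm bound applies to the induced representative $(c_\eps)$ of $b$.

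Finally, because $f$ is non-identically null, $|||f|||$ (see \eqref{eq:SupNorm}) is strictly positive and hence invertible in $\rti$, so by the definition of sharp supremum I can pick $x^* \in \mathcal{G}_E$ with $||x^*|| \leq 1$ and $|f(x^*)| \geq |||f|||/2$; decomposing $I$ into the subsets on which $f_\eps(x^*_\eps)$ is non-negative or negative and combining the two pieces with opposite signs (via characteristic-function idempotents $e_S$), I may further assume $f(x^*) \geq |||f|||/2$. Setting $r := \diff\rho^{m+1}\, |||f|||/2 \in \rti_{>0}$, the inclusion $x + \diff\rho^{m+1} x^* \in \tilde B$ yields $f(x) \leq \alpha - r$ for $x \in A$, and $y - \diff\rho^{m+1} x^* \in \tilde D$ yields $f(y) \geq \alpha + r$ for $y \in C$, which is the desired strict separation. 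The delicate step I expect to be the main obstacle is the bookkeeping of the fattening inclusions, where the asymmetry between the internal form $[\cdot]$ of $\tilde B$ and the strongly internal form $\langle \cdot \rangle$ of $\tilde D$ forces the strict reduction from $\diff\rho^m$ to $\diff\rho^{m+1}$; extracting a sign-definite near-maximizer $x^*$ in the last step is a second subtlety caused by the non-total order of $\rti$.
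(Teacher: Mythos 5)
Your proposal is correct and follows the same skeleton as the paper's proof: fatten both sets by a ball of radius $\diff\rho^{m}$ using Prop.~\ref{prop:=000020strong=000020disj=000020compact=000020}, apply the first geometric form (Thm.~\ref{thm:geo=000020form=0000201}) to the fattened pair, and then convert the fattening margin into a strict gap $r$. Where you genuinely diverge is in the last step. The paper sets $\overline{\sigma}=\sup_{z\in\overline{B}_{1}(0)}f(z)$ and $\sigma=\inf_{z\in B_{1}(0)}f(z)$ and takes $r=\frac{1}{2}\min(\diff\rho^{m}\overline{\sigma},-\diff\rho^{m}\sigma)$; this requires Lem.~\ref{lem:SupInf} to guarantee that these sharp suprema/infima exist (the infimum over the open ball being the delicate part, resting on Prop.~\ref{prop:ConvImlies}), and it also invokes Lem.~\ref{lem:Sum} to identify the fattened sets with $A+\overline{B}_{\diff\rho^{m}}(0)$ and $C+B_{\diff\rho^{m}}(0)$. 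You instead verify the one-sided fattening inclusions $x+b\in\tilde{B}$, $y+b\in\tilde{D}$ for $\Vert b\Vert\leq\diff\rho^{m+1}$ directly at the level of representatives (correctly handling the $[\cdot]$ versus $\langle\cdot\rangle$ asymmetry), and then test with a single sign-corrected near-maximizer $x^{*}$ of $|f|$ on the unit ball, obtained from the defining property of the sharp supremum $|||f|||$ in \eqref{eq:SupNorm} together with an idempotent decomposition to force $f(x^{*})\geq|||f|||/2>0$. This buys you a proof that bypasses both Lem.~\ref{lem:SupInf} and Lem.~\ref{lem:Sum} entirely, at the cost of the extra exponent ($\diff\rho^{m+1}$ instead of $\diff\rho^{m}$), which only affects the size of $r$, not the validity of the conclusion. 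One cosmetic slip: the radius $\rho_{\eps}^{m}$ of the fattening ball is moderate, not negligible, but that is all that is needed for the fattened representatives to stay sharply bounded, so the argument is unaffected.
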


For the proof of the theorem, we need the following
\begin{lem}
\label{lem:SupInf}Let $f:[f_{\eps}]:\mathcal{G}_{E}\ra\rti$ be a
continuous $\rti$-linear non-identically null. Then, the supremum
of the set $f(\overline{B}_{1}(0))$ and the infimum of the set $f(B_{1}(0))$
exist. 
\end{lem}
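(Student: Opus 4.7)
I would fix a representative $(f_\eps)$ of $f$ and set $\sigma_\eps := \sup\{f_\eps(x) \mid x \in E_\eps,\ \|x\|_\eps \leq 1\}$ for each $\eps$. Linearity of $f_\eps$ together with symmetry of the closed unit ball of $E_\eps$ gives $\sigma_\eps = \sup\{|f_\eps(x)| : \|x\|_\eps \leq 1\} = |||f_\eps|||_\eps$, so by \eqref{eq:SupNorm}--\eqref{eq:norm} the net $(\sigma_\eps)$ is $\rho$-moderate and the class $\sigma := [\sigma_\eps] = |||f|||$ does not depend on the chosen representative (two representatives of $f$ differ operator-norm-wise by a negligible net).

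To prove $\sup f(\overline{B}_1(0)) = \sigma$, for the upper bound I take $x \in \overline{B}_1(0)$ with any representative $(x_\eps)$; since $\|x\| \leq 1$, I may assume $\|x_\eps\|_\eps \leq 1 + n_\eps$ for some non-negative $(n_\eps) \in \mathcal{N}_{\R}$, and the rescaled net $x'_\eps := (1+n_\eps)^{-1} x_\eps$ still represents $x$ (since $x'_\eps - x_\eps$ is negligible) and lies in the closed unit ball, so $f_\eps(x'_\eps) \leq \sigma_\eps$ yields $f(x) \leq \sigma$ after passing to $\rti$. For the approximation property, given $q \in \N$, the classical $\eps$-level supremum furnishes $\bar{x}_\eps \in E_\eps$ with $\|\bar{x}_\eps\|_\eps \leq 1$ and $f_\eps(\bar{x}_\eps) \geq \sigma_\eps - \rho_\eps^q$; moderation of $(\bar{x}_\eps)$ is automatic, hence $\bar{x} := [\bar{x}_\eps] \in \overline{B}_1(0)$ and $f(\bar{x}) \geq \sigma - \diff\rho^q$.

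For $\inf f(B_1(0))$ I would use the symmetry $B_1(0) = -B_1(0)$ together with $f(-y) = -f(y)$ to reduce the claim to $\sup f(B_1(0)) = \sigma$, from which $\inf f(B_1(0)) = -\sigma$ is immediate. The upper bound $\sup f(B_1(0)) \leq \sigma$ follows from $B_1(0) \subseteq \overline{B}_1(0)$ and the previous step. For the approximation, using $\rho$-moderation of $\sigma = |||f|||$ fix $N \in \N$ with $\sigma \leq \diff\rho^{-N}$; given $q \in \N$, pick $\bar{x} \in \overline{B}_1(0)$ with $f(\bar{x}) \geq \sigma - \diff\rho^{q+1}$ and set $\bar{y} := (1 - \diff\rho^{N+q+1})\bar{x}$. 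Since $1 - \|\bar{y}\| \geq \diff\rho^{N+q+1}$ is invertible, $\bar{y} \in B_1(0)$, and
\[
f(\bar{y}) = (1 - \diff\rho^{N+q+1}) f(\bar{x}) \geq (1 - \diff\rho^{N+q+1})(\sigma - \diff\rho^{q+1}) \geq \sigma - 2\diff\rho^{q+1} \geq \sigma - \diff\rho^q,
\]
where the penultimate step uses $\diff\rho^{N+q+1}\sigma \leq \diff\rho^{q+1}$ and the last uses $2\diff\rho < 1$ in $\rti$.

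The main obstacle is the transition from closed to open ball in the second half: natural $\eps$-level near-extremizers sit on $\overline{B}_1(0)$ and must be scaled strictly into $B_1(0)$ without losing more than $\diff\rho^q$ in $f$-value. This balancing act is precisely what forces one to invoke the $\rho$-moderation bound $\sigma \leq \diff\rho^{-N}$ in order to calibrate the scaling exponent; without it the error term $\diff\rho^{N+q+1}\sigma$ could not be controlled against $\diff\rho^q$.
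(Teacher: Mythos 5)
Your proof is correct, and the second half takes a genuinely different route from the paper's. For the supremum over $\overline{B}_{1}(0)$ the two arguments coincide in substance: the image is the internal set $[f_{\eps}(\overline{B}_{1\eps}(0))]$, whose $\eps$-wise near-maximizers immediately give the approximation property; you simply write out what the paper dispatches as ``follows easily''. For the infimum over the open ball, the paper works directly with $\sigma=[\inf f_{\eps}(B_{1\eps}(0))]$ and closes the gap between an $\eps$-wise near-infimizing net and an actual element of $B_{1}(0)$ by invoking Prop.~\ref{prop:ConvImlies} (i.e.\ property \eqref{eq:1.2-1}) twice --- once for the strongly internal set $\langle f_{\eps}(B_{1\eps}(0))\rangle$ and once for $B_{1}(0)$ itself --- thus leaning on the convexity/approximation machinery already developed for the Minkowski functional. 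You instead reduce to a supremum via the symmetry $B_{1}(0)=-B_{1}(0)$, $f(-y)=-f(y)$, and pass from the closed to the open ball by the explicit rescaling $(1-\diff\rho^{N+q+1})\bar{x}$, calibrated against the moderateness bound $|||f|||\leq\diff\rho^{-N}$; this is more elementary and self-contained, and it additionally identifies the two extrema as $\pm|||f|||$, which is consistent with (and slightly sharper than) what the paper records. Two small points you should make explicit if you write this up: the $\eps$-level suprema $\sigma_{\eps}$ are finite only for $\eps$ small (where \eqref{eq:1.25} applies, which suffices for the class $[\sigma_{\eps}]$), and the representative-independence of $[|||f_{\eps}|||_{\eps}]$ requires the usual subsequence/contradiction argument rather than being literally immediate. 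Neither affects the validity of the proof.
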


\begin{proof}
The existence of the supremum of $f(\overline{B}_{1}(0))$ follows
easily from the equality $f(\overline{B}_{1}(0))=[f_{\eps}(\overline{B}_{1\eps}(0))]$,
where $B_{1\eps}(0)$ is the unit ball of $E_{\eps}$. 

We prove now the existence of the infimum of $f(B_{1}(0))$. Since
$f_{\eps}$ is continuous and $(B_{1\eps})$ is sharply bounded, the
net $(f_{\eps}(B_{1\eps}))$ is sharply bounded as well. We can hence
set $\sigma=[\sigma_{\eps}]:=[\inf(f_{\eps}(B_{1\eps}(0))]$ which
is a lower bound of $f(B_{1}(0))$. Let now $q\in\N$. By definition
of $\sigma$, for a negligible net $(n_{\eps})$ there exists a net
$(z_{\eps})$ of $\prod_{\eps\in I}f_{\eps}(B_{1\eps}(0))$ such that
\[
\forall^{0}\eps:\,z_{\eps}\leq\sigma_{\eps}+n_{\eps}.
\]
The strongly internal set $\langle f_{\eps}(B_{1\eps}(0))\rangle$
satisfies the assumptions of Prop.~\ref{prop:ConvImlies}. Hence,
from \eqref{eq:1.2-1}, there exists $\overline{y}\in\langle f_{\eps}(B_{1\eps}(0))\rangle$
such that 
\[
\overline{y}\leq[z_{\eps}]+\diff\rho^{q+1}\leq\sigma+\diff\rho^{q+1}
\]
Let $(\overline{y}_{\eps})$ be a representative of $\overline{y}$.
Then, for all $\eps$ small, there exists $x_{\eps}\in B_{1\eps}(0)$
such that $\overline{y}_{\eps}=f_{\eps}(x_{\eps})$. Again, the strongly
internal set $B_{1}$ satisfies the assumptions of Prop.~\ref{prop:ConvImlies}.
Hence, \eqref{eq:1.2-1} implies that there exists $\overline{x}\in B$
such that $||x-\overline{x}||<\diff\rho^{q+1}|||f|||^{-1}$ where
$[x_{\eps}]=x$ and $|||f|||>0$ since $f$ is non-identically null.
It follows that 
\[
f(\overline{x})\leq\overline{y}+|||f||||x-\overline{x}||\leq\sigma+2\diff\rho^{q+1}\leq\sigma+\diff\rho^{q}.
\]
Therefore, $\sigma$ is the infimum of the set $f(B_{1}(0))$.
\end{proof}
\begin{proof}[Proof of Thm.~\ref{thm:2nd=000020form}]
\textcolor{black}{{} }By assumptions, there exists a representative
$(A_{\eps})$ of $A$ and a representative $(C_{\eps})$ of $C$ that
are sharply bounded and consisting of convex sets. By Lem.~\ref{lem:Sum}
and Prop.~\ref{prop:=000020strong=000020disj=000020compact=000020}
there exists $m\in\N$ such that the sets $B:=A+\overline{B}_{\diff\rho^{m}}(0)=[A_{\varepsilon}+\overline{B}_{\rho_{\varepsilon}^{m}}(0)]$
and $D:=C+B_{\diff\rho^{m}}(0)=\langle C_{\varepsilon}+B_{\rho_{\varepsilon}^{m}}(0)\rangle$
are strongly disjoint. Moreover, one can use Thm.~\ref{thm:geo=000020form=0000201}
and hence there exist a continuous $\rti$-linear map $f=[f_{\eps}]\ra\rti$
non-identically null and $\alpha\in\rti$ such that
\[
\forall x\in A\,\forall y\in C\,\forall\overline{z}\in\overline{B}_{1}(0)\,\forall z\in B_{1}(0):\,f(x+\diff\rho^{m}\overline{z})\leq\alpha\leq f(y+\diff\rho^{m}\text{\ensuremath{z}})
\]
which implies that 
\[
\forall x\in A\,\forall y\in C\,\forall\overline{z}\in\overline{B}_{1}(0)\,\forall z\in B_{1}(0):\,f(x)+\diff\rho^{m}f(\overline{z})\leq\alpha\leq f(y)+\diff\rho^{m}f(z).
\]
Setting $\overline{\sigma}=\sup_{z\in\overline{B}_{1}(0)}f(z)$, $\sigma=\inf_{z\in B_{1}(0)}f(z)$,
which exist thanks to Lem.~\ref{lem:SupInf}. Moreover, we clearly
have $\overline{\sigma}>0$ and $\sigma<0$ because $f$ is non-identically
null. It follows that 
\[
\forall x\in A\,\forall y\in C:\,f(x)+\diff\rho^{m}\overline{\sigma}\leq\alpha\leq f(y)+\diff\rho^{m}\sigma
\]
Setting $r=\frac{1}{2}\min(\diff\rho^{m}\overline{\sigma},-\diff\rho^{m}\sigma)\in\rti_{>0}$.
It follows that 
\[
\forall x\in A\,\forall y\in C:\,f(x)+r<\alpha<f(y)-r.
\]
Thus, the closed hyperplane $\tilde{H}=\{f=\alpha\}$ strictly separates
$A$ and $C$. 
\end{proof}

\end{document}